\newcommand{\hide}[1]{\ifthenelse{\boolean{printforcirculation}}{}{\cyan{#1}}}
\newcommand{\finalhide}[1]{\ifthenelse{\boolean{printforcirculation}}{}{#1}}
\newcommand{\bonus}[1]{\ifthenelse{\boolean{bonusmaterial}}{{\scalefont{0.8} #1}}}{}
\DeclareRobustCommand{\gobblefive}[5]{}
\newcommand*{\SkipTocEntry}{\addtocontents{toc}{\gobblefive}}
\DeclareRobustCommand{\SkipTocEntry}[5]{}
\newcommand{\bN}{{\mathbb N}}
\newcommand{\bP}{{\mathbb P}}
\newcommand{\bQ}{{\mathbb Q}}
\newcommand{\bR}{{\mathbb R}}
\newcommand{\bZ}{{\mathbb Z}}
\newcommand{\dO}{{\mathcal O}}
\DeclareSymbolFont{cyrletters}{OT2}{wncyr}{m}{n}
\DeclareMathSymbol{\Sha}{\mathalpha}{cyrletters}{"58}
\DeclareMathOperator{\Aut}{Aut}
\DeclareMathOperator{\Hom}{Hom}
\DeclareMathOperator{\id}{id}
\DeclareMathOperator{\Isom}{Isom}
\newcommand{\xyinj}{\ar@{^(->}}
\DeclareMathOperator{\GL}{GL}
\DeclareMathOperator{\PGL}{PGL}
\newcommand{\tr}{{\rm tr}} 
\DeclareMathOperator{\ad}{ad}
\DeclareMathOperator{\Char}{char} 
\DeclareMathOperator{\Div}{Div}
\DeclareMathOperator{\divisor}{div}
\newcommand{\Nm}{{\rm Nm}}
\newcommand{\OO}{\dO}
\DeclareMathOperator{\PDiv}{PDiv}
\DeclareMathOperator{\Pic}{Pic}
\DeclareMathOperator{\Stab}{Stab}
\DeclareMathOperator{\trdeg}{{trdeg}}
\DeclareMathOperator{\Frob}{Frob}
\DeclareMathOperator{\ord}{ord}
\newcommand{\cd}{{\rm cd}}
\DeclareMathOperator{\cores}{cor}
\DeclareMathOperator{\Gal}{Gal}
\DeclareMathOperator{\res}{res}
\def\10{{\overrightarrow{10}}}
\def\01{{\overrightarrow{01}}}
\newcommand{\ab}{{\rm ab}}
\DeclareMathOperator{\Jac}{{\rm Jac}}
\newcommand{\op}{{\rm op}}
\newcommand{\sep}{{\rm sep}}
\newcommand{\tot}{{\rm tot}}
\newcommand{\lto}{\longrightarrow}
\newcommand{\Map}{{\rm Map}}
\def\proofname{Beweis}
\newtheorem{thm}{\protect\thrmname}
\newtheorem{prop}[thm]{\protect\propname}
\newtheorem{lem}[thm]{\protect\lemname}
\newtheorem{thmalpha}{\protect\thmalphaname}
\theoremstyle{definition}
\newtheorem{defi}[thm]{\protect\definame}
\theoremstyle{remark}
\newtheorem{rmk}[thm]{\protect\rmkname}
\newtheorem*{rmks*}{\protect\rmksname}
\newenvironment{pro*}[1][\proofname]{{\it{#1:}} }{}
\newenvironment{pro**}[1][]{{\it{#1}} }{\hfill $\square$}
\numberwithin{equation}{section}
\numberwithin{thm}{section}
\newcommand{\thrmname}{}
\newcommand{\thmdefiname}{}
\newcommand{\propname}{}
\newcommand{\lemname}{}
\newcommand{\lemdefiname}{}
\newcommand{\conjname}{}
\newcommand{\factname}{}
\newcommand{\corname}{}
\newcommand{\definame}{}
\newcommand{\axiomname}{}
\newcommand{\algoname}{}
\newcommand{\rmkname}{}
\newcommand{\rmksname}{}
\newcommand{\notaname}{}
\newcommand{\exname}{}
\newcommand{\questionname}{}
\newcommand{\applname}{}
\newcommand{\claimname}{}
\newcommand{\exercisename}{}
\newcommand{\thmalphaname}{}
	\renewcommand{\thrmname}{Theorem}
	\renewcommand{\thmdefiname}{Theorem--Definition}
	\renewcommand{\propname}{Proposition}
	\renewcommand{\lemname}{Lemma}
	\renewcommand{\lemdefiname}{Lemma--Definition}
	\renewcommand{\conjname}{Conjecture}
	\renewcommand{\factname}{Fact}
	\renewcommand{\corname}{Corollary}
	\renewcommand{\definame}{Definition}
	\renewcommand{\axiomname}{Axiom}
	\renewcommand{\algoname}{Algorithm}
	\renewcommand{\rmkname}{Remark}
	\renewcommand{\notaname}{Notation}
	\renewcommand{\exname}{Example}
	\renewcommand{\questionname}{Question}
	\renewcommand{\applname}{Application}
	\renewcommand{\claimname}{Claim}
	\renewcommand{\exercisename}{Exercise}
	\renewcommand{\thmalphaname}{Theorem}
	\renewcommand{\rmksname}{Remarks}
	\renewcommand{\thrmname}{Satz}
	\renewcommand{\thmdefiname}{Satz--Definition}
	\renewcommand{\propname}{Proposition}
	\renewcommand{\lemname}{Lemma}
	\renewcommand{\lemdefiname}{Lemma--Definition}
	\renewcommand{\conjname}{Vermutung}
	\renewcommand{\factname}{Fakt}
	\renewcommand{\corname}{Korollar}
	\renewcommand{\definame}{Definition}
	\renewcommand{\axiomname}{Axiom}
	\renewcommand{\algoname}{Algorithmus}
	\renewcommand{\rmkname}{Bemerkung}
	\renewcommand{\notaname}{Notation}
	\renewcommand{\exname}{Beispiel}
	\renewcommand{\questionname}{Frage}
	\renewcommand{\applname}{Anwendung}
	\renewcommand{\claimname}{Behauptung}
	\renewcommand{\exercisename}{Übung}
	\renewcommand{\thmalphaname}{Satz}
	\renewcommand{\rmksname}{Bemerkungen}
\newcounter{absatzcounter}[section]
\newlist{enumer}{enumerate}{2}
\setlist[enumer]{label=(\roman*),align=left,labelindent=.5cm,leftmargin=*,widest = (iii)}
\newlist{enumerar}{enumerate}{1}
\setlist[enumerar]{label=\arabic*.,align=left,labelindent=.5cm,leftmargin=*,widest = 8.}
\newlist{enumera}{enumerate}{2}
\setlist[enumera]{label=(\arabic*),align=left,labelindent=.5cm,leftmargin=*,widest = (8)}
\newlist{enumeral}{enumerate}{2}
\setlist[enumeral]{label=(\alph*),align=left,labelindent=.5cm,leftmargin=*,widest = (m)}
\newcommand{\makecyan}{\color{cyan}}
\newcommand{\cyan}[1]{{\makecyan #1}}
\definecolor{shadecolor}{RGB}{186,238,186}
\definecolor{softlimegreen}{RGB}{186,238,186}
\definecolor{limegreen}{RGB}{208,243,208}
\definecolor{questioncolor}{RGB}{135, 173, 241} 
\definecolor{warningcolor}{RGB}{240,120,134}
\definecolor{verypaleyellow}{RGB}{255,255,194}
\newenvironment{diagram}{\begin{center}\begin{tikzcd}}{\end{tikzcd}\end{center}}
\newcommand*\isomto{%
	\mathrel{\vbox{\offinterlineskip\ialign{%
		\hfil##\hfil\cr
		$\scriptscriptstyle\sim$\cr
		\noalign{\kern0.1ex}
		$\longrightarrow$\cr
	}}}}
\newcommand\suchthat{%
	\mathrel{}\middle|\mathrel{}
}
\newcommand{\defstyle}[1]{\textbf{#1}}
\newcommand*{\longhookrightarrow}{\ensuremath{\lhook\joinrel\relbar\joinrel\rightarrow}}
\begin{document}
	
\title[A Birational Anabelian Reconstruction Theorem for Curves]{A Birational Anabelian Reconstruction Theorem for Curves over Algebraically Closed Fields in Arbitrary Characteristic}
\author{Martin Lüdtke}
\address{Institut für Mathematik\\Johann Wolfgang Goethe-Universität\\Robert-Mayer-Str. 6--8\\60325 Frankfurt am Main\\Germany}
\email{luedtke@math.uni-frankfurt.de}

\begin{abstract}
	The aim of Bogomolov's programme is to prove birational anabelian conjectures for function fields $K|k$ of varieties of dimension $\geq 2$ over algebraically closed fields. The present article is concerned with the 1-dimensional case. While it is impossible to recover $K|k$ from its absolute Galois group alone, we prove that it can be recovered from the pair $(\Aut(\overline{K}|k),\Aut(\overline{K}|K))$, consisting of the absolute Galois group of $K$ and the larger group of field automorphisms fixing only the base field. 
\end{abstract}
\maketitle
\tableofcontents

\section{Introduction}

The aim of the birational anabelian program initiated by Bogomolov~\cite{bogomolov} at the beginning of the 1990's is to recover function fields $K|k$ of dimension $>1$ over algebraically closed fields from their absolute Galois group $G_K$. This cannot be possible in the one-dimensional case since then $G_K$ is profinite free of rank $|k|$ by results of Harbater~\cite{harbater} and Pop~\cite{pop-galois-covers}, containing therefore almost no information about $K$. We show however that $K|k$ can be recovered if in addition to $G_K$ also the larger automorphism group $\Aut(\overline{K}|k) \supseteq G_K$ fixing only the base field is provided. On the way, we prove a Galois-type correspondence for transcendental field extensions and give a group-theoretic characterisation of stabiliser subgroups for $\PGL(2,k)$ acting on $\bP^1$.

We use the following notation: Let $k$ be an algebraically closed base field. A function field $K|k$ is a finitely generated field extension. Its dimension is defined as the transcendence degree $\trdeg(K|k)$. If $F := \overline{K}$ is an algebraic closure, we write $G_{F|k} := \Aut(F|k)$ and denote the absolute Galois group of $K$ by $U_K := \Aut(F|K) \cong \Gal(K^{\sep}|K)$. The group $G_{F|k} \subseteq \Map(F,F)$ is endowed with the compact-open topology for discrete $F$, making $U_K$ an open subgroup whose induced topology agrees with the usual profinite Krull topology.

\begin{thmalpha}
	\label{thm-function-field-main-theorem}
	Let $k$ be an algebraically closed field and $K|k$ a 1-dimensional function field with algebraic closure $F = \overline{K}$. If $(k',K',F')$ is another such triple, then the natural map
	$$\Phi: \Isom(F'|K'|k', F|K|k) \longrightarrow \Isom((G_{F|k},U_K), (G_{F'|k'},U_{K'}))$$
	is a bijection.
\end{thmalpha}

The right hand side consists of isomorphisms of topological groups $G_{F|k} \isomto G_{F'|k'}$ which restrict to an isomorphism between the open subgroups $U_{K} \cong U_{K'}$. An isomorphism between field towers $F'|K'|k'$ and $F|K|k$ is by definition an isomorphism $\sigma: F' \isomto F$ restricting to isomorphisms $K' \cong K$ and $k' \cong k$. The natural map in the theorem assigns to such $\sigma$ the isomorphism $\Phi(\sigma): (G_{F|k},U_K) \isomto (G_{F'|k'},U_{K'})$ given by
$$\Phi(\sigma)(\tau) = \sigma^{-1} \circ \tau \circ \sigma.$$

It is useful to not single out one function field in $F|k$ but rather work with the totality of them and study their interplay that comes from inclusions between them, or equivalently from morphisms between their complete nonsingular models. We therefore prove the following variant which, as proved at the end of Section~\ref{sec-galois-type-correspondence}, implies Theorem~\ref{thm-function-field-main-theorem}.

\begin{thmalpha}
	\label{thm-main-theorem}
	Let $F|k$ be an extension of algebraically closed fields of transcendence degree 1. If $F'|k'$ is another extension of algebraically closed fields, the natural map
	\begin{align*}
		\Phi: \Isom^i(F'|k', F|k) &\lto \Isom(G_{F|k},G_{F'|k'})
	\end{align*}
	is a bijection.
\end{thmalpha}

Here, $\Isom^i$ denotes the set of isomorphisms up to Frobenius twists, i.e.\ identifying $\sigma \sim \Frob^n \circ \sigma$ for $n \in \bZ$ if $\Char(k) = \Char(k') = p > 0$, where $\Frob \in \Isom(F|k,F|k)$ is $\Frob(x) = x^p$.

The present article is heavily based on~\cite{rovinsky}. It contains however several new aspects, most notably extending the results to function fields of positive characteristic. This required a Galois-type correspondence theorem for certain transcendental field extensions (Theorem~\ref{thm-galois}). Moreover, we include the details of how the stabiliser subgroups of $\PGL(2,k)$ acting on $\bP^1$ are group-theoretically distinguished (Lemma~\ref{lem-stab-subgroups-in-pgl}) and present a simplified way of detecting decomposition groups (Lemma~\ref{lem-detect-dec-groups}). We also take an alternative route to reconstruct the function fields from there, namely via the reconstruction of ramification indices and principal divisors, whereas the approach of \cite{rovinsky} is based on linear systems.

\subsection*{Acknowledgements}
The author would like to thank Jakob Stix for careful proofreading and many helpful comments, Armin Holschbach for numerous discussions and Alexander Schmidt for supervising the master's thesis from which this article originated.

\section{Injectivity}
\label{sec-injectivity}

We start by quickly treating the easier part of Theorem~\ref{thm-function-field-main-theorem}, namely the question of injectivity. In fact, we show the following stronger statement.

\begin{thm}
	\label{thm-injectivity}
	Let $K$ be a function field of dimension $\geq 1$ over an algebraically closed field $k$, let $F$ be its algebraic closure and $U_K = \Aut(F|K)$ its absolute Galois group. If $(k',K',F')$ is another such triple, then the natural map
	$$\Isom(F'|K', F|K) \lto \Isom(U_K, U_{K'})$$
	is injective.
\end{thm}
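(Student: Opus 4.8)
The claim is that if two function field extensions $F|K$ and $F'|K'$ over algebraically closed fields $k, k'$ are given, then an isomorphism $F' \to F$ restricting to $K' \to K$ is determined by the induced isomorphism of absolute Galois groups $U_K \to U_{K'}$. Equivalently, if $\sigma \in \Aut(F|K)$ induces the identity on $U_K$ via conjugation, then $\sigma = \id$; more precisely, two isomorphisms $\sigma_1, \sigma_2$ inducing the same map on Galois groups differ by an element of the centre of $U_K$, and I must show that element is trivial.

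Let me set this up concretely. Suppose $\sigma_1, \sigma_2 : F' \isomto F$ both restrict to isomorphisms $K' \isomto K$, and $\Phi(\sigma_1) = \Phi(\sigma_2)$. Then $\rho := \sigma_1 \circ \sigma_2^{-1}$ is an element of $\Aut(F|K)=U_K$ such that conjugation by $\rho$ is the identity on $U_K$, i.e.\ $\rho$ lies in the centre $Z(U_K)$. So the entire statement reduces to showing that $Z(U_K)$ is trivial. Now $U_K \cong \Gal(K^{\sep}|K)$ is the absolute Galois group of a function field of dimension $\geq 1$ over an algebraically closed field $k$. Such a group has trivial centre: it is a non-trivial profinite group that is either free profinite of infinite rank (when $\dim K = 1$, by Harbater--Pop, since $k$ is algebraically closed) or, more generally for $\dim K \geq 1$, contains enough open subgroups of unbounded index and sufficiently complicated structure that no non-trivial element is central. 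The cleanest uniform argument: $K$ admits a subfield $k(t) \subseteq K$ with $K|k(t)$ finite, and already $G_{k(t)}$ is free profinite of infinite rank; an element of $Z(U_K)$ would in particular have to act trivially by conjugation on an open subgroup, and one shows using the abundance of quotients (e.g.\ realizing arbitrary finite groups as Galois groups, or simply that a free profinite group of rank $\geq 2$ is centreless) that this forces $\rho = \id$. The key point to extract is: \emph{the absolute Galois group of a $1$-dimensional function field over an algebraically closed field has trivial centre}, and hence so does that of any finite extension.

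The hardest part is making the centreless claim airtight in arbitrary characteristic and for dimension $\geq 1$ rather than exactly $1$. In the $1$-dimensional case it is immediate from $U_K$ being free profinite of infinite rank (a free profinite group of rank $\geq 2$ has trivial centre, since it surjects onto, say, a non-abelian finite group with trivial centre, or onto $\hat{\bZ} * \hat{\bZ}$). For dimension $\geq 2$ one instead passes to a transcendence basis $t_1, \dots, t_d$ of $K|k$ and notes $U_K$ is a finite-index-extension-related subgroup of $G_{k(t_1,\dots,t_d)}$; the latter is centreless because it has a quotient isomorphic to the absolute Galois group of a $1$-dimensional function field over the algebraically closed field $\overline{k(t_2,\dots,t_d)}$, which is free of infinite rank — so one reduces back to the dimension-one case. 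I will carry this out, then conclude: $\rho \in Z(U_K) = 1$, so $\sigma_1 = \sigma_2$, proving injectivity.
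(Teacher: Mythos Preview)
Your reduction contains a genuine gap. You claim that $\rho := \sigma_1 \circ \sigma_2^{-1}$ lies in $U_K = \Aut(F|K)$, but this is false in general: an element of $\Isom(F'|K',F|K)$ is an isomorphism $F' \isomto F$ that restricts to \emph{some} isomorphism $K' \isomto K$, not to a fixed one. Hence $\sigma_1|_{K'}$ and $\sigma_2|_{K'}$ may be different isomorphisms $K' \isomto K$, and $\rho$ only satisfies $\rho(K) = K$ setwise. What you actually obtain from $\Phi(\sigma_1) = \Phi(\sigma_2)$ is that $\rho$ centralises $U_K$ inside $\Aut(F)$, not that $\rho \in Z(U_K)$. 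So the statement you need is that the centraliser of $U_K$ in $\{\sigma \in \Aut(F) : \sigma K = K\}$ is trivial, which is strictly stronger than $Z(U_K) = 1$. Knowing that $U_K$ is centreless (which is indeed true, e.g.\ via the free profinite structure in dimension~1) does not by itself rule out a nontrivial automorphism of $K$ lifting to an automorphism of $F$ that happens to commute with all of $U_K$.

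The paper handles exactly this stronger statement, and does so by a completely different method: valuation theory. One shows that such a $\rho$ must preserve every rank~1 valuation $v$ of $F$ that is discrete on $K$. The key input is F.~K.~Schmidt's theorem that a field henselian for two inequivalent rank~1 valuations is separably closed, which forces distinct decomposition groups in $U_K$ to intersect trivially; since $\rho$ centralises $U_K$ it fixes each decomposition group, hence each such valuation. From $v(\rho x/x) = 0$ for all such $v$ one deduces $\rho x/x \in k^\times$ for every $x \in F^\times$, and a short additive argument then gives $\rho = \id$. If you want to repair your approach, you would first need to show directly that $\rho|_K = \id_K$ (so that $\rho \in U_K$ after all), but this step already seems to require an argument of comparable depth to the paper's.
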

Note that the function fields $K$ and $K'$ are allowed to have any dimension $\geq 1$, and on the right hand side we have isomorphisms between $U_K$ and $U_{K'}$ rather than isomorphisms of pairs $(G_{F|k},U_K)$, $(G_{F'|k'},U_{K'})$. Thus the statement applies also to situations that Bogomolov's conjecture is concerned with. Our proof uses a valuation theoretic result of F.~K.~Schmidt and is similar to that of the last lemma in \cite{pop-galois-theory-of-function-fields}.

Recall that a \defstyle{rank 1 valuation} on a field $K$ is a map $v: K \to \bR \cup \{\infty\}$ satisfying
\begin{enumerate}[leftmargin=1.5cm,label=(\alph*),itemsep=0ex]
	\item{$v(x) = \infty \; \Leftrightarrow \; x = 0;$}
	\item{$v(xy) = v(x) + v(y);$}
	\item{$v(x+y) \geq \min\{v(x), v(y)\};$}
\end{enumerate}
and moreover $v \not\equiv 0$ on $K^\times$. It is a \defstyle{discrete valuation} if its value group $v(K^\times)$ is a discrete subgroup of $\bR$. Two rank 1 valuations $v$, $w$ on $K$ are \defstyle{equivalent}, written $v \sim w$, if $w = \lambda v$ for some $\lambda > 0$. The automorphism group $\Aut(K)$ acts on the set of rank 1 valuations on $K$ by the rule $\tau v = v \circ \tau^{-1}$. The \defstyle{decomposition group} of $v$ over a subfield $k \subseteq K$ is defined as
$$Z_v(K|k) = \left\{ \tau \in \Aut(K|k) \suchthat \tau v \sim v \right\}.$$
The field $K$ is \defstyle{henselian} with respect to the valuation $v$ if $v$ extends uniquely to every algebraic extension of $K$.

\begin{lem}
	\label{lem-dec-groups-trivial intersection}
	Let $K$ be a field, $F = \overline{K}$ its algebraic closure and $v$, $w$ two rank 1 valuations on $F$ with decomposition groups $Z_v$, $Z_w$ over $K$. If $v$ and $w$ are inequivalent, then
	$$Z_v \cap Z_w = 1.$$
\end{lem}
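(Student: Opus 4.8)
The plan is to argue by contradiction, the key external input being the \emph{independence theorem of F.~K.~Schmidt}: a field that is henselian with respect to two independent (equivalently, for rank~$1$ valuations, inequivalent) valuations is separably closed.

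Assume $H := Z_v \cap Z_w \neq 1$. Recall that $Z_v$ is a closed subgroup of $\Aut(F|K)$ — for a rank~$1$ valuation, $\tau v \sim v$ is equivalent to $\tau(\mathcal{O}_v) = \mathcal{O}_v$, and this is a closed (indeed clopen) condition for the action on the discrete set $F$ — so $H$ is closed as well. Put $L := F^H$. Since $F$ is algebraically closed and algebraic over $L$ we have $F = \overline{L}$, and $K \subseteq L \subsetneq F$ because $H \neq 1$; by the Galois correspondence (applied to $K^{\sep}$, the purely inseparable part over $K$ contributing no automorphisms) we get $\Aut(F|L) = H$.

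Next I would show that $L$ is henselian with respect to both $v_L := v|_L$ and $w_L := w|_L$. As $F|L$ is normal, any two extensions of $v_L$ to $F$ are conjugate under $\Aut(F|L) = H$; but every element of $H \subseteq Z_v$ fixes the equivalence class of $v$, so $v$ is the unique extension of $v_L$ to $F$, which is exactly the assertion that $L$ is henselian at $v_L$. The same argument applies to $w_L$. Moreover $v_L \not\sim w_L$: otherwise $w_L = \lambda v_L$ for some $\lambda > 0$, so $\lambda^{-1} w$ is an extension of $v_L$ to $F$, forcing $\lambda^{-1} w = v$ by the uniqueness just established, i.e.\ $v \sim w$, contrary to hypothesis.

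Thus $L$ is henselian with respect to the two inequivalent rank~$1$ valuations $v_L$ and $w_L$, hence separably closed by F.~K.~Schmidt's theorem. But then $F = \overline{L}$ is a purely inseparable extension of $L$, so $\Aut(F|L) = 1$, i.e.\ $H = 1$, a contradiction; therefore $Z_v \cap Z_w = 1$. The steps that need care are the ones sensitive to inseparability in characteristic $p$ — that $\Aut(F|L) = H$, and that $\Aut(F|L) = 1$ once $L$ is separably closed — together with citing F.~K.~Schmidt's theorem in the correct form; the conjugacy of extensions of a valuation along a normal algebraic extension is standard but is the one genuinely valuation-theoretic ingredient besides F.~K.~Schmidt.
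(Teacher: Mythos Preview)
Your proof is correct and follows essentially the same approach as the paper: pass to the fixed field $L = F^{Z_v \cap Z_w}$, show it is henselian for the restrictions of both $v$ and $w$, invoke F.~K.~Schmidt's theorem to conclude $L$ is separably closed, and deduce $Z_v \cap Z_w = 1$. The paper's proof is three lines and omits the details you supply (notably why the restrictions $v_L$, $w_L$ remain inequivalent, and the care with inseparability in positive characteristic); your contradiction framing is cosmetic and could be dropped, since the argument already yields $H \subseteq \Aut(F|L) = 1$ directly.
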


\begin{proof}
	The fixed field $F^{Z_v \cap Z_w}$ is henselian with respect to the restrictions of $v$ and $w$, hence is separably closed by a theorem of F.~K.~Schmidt (cf.~\cite{engler-prestel}, Thm.~4{.}4{.}1). Thus, $F^{Z_v \cap Z_w} = F$ and $Z_v \cap Z_w = 1$.
\end{proof}

\begin{lem}
	\label{lem-autom-id-citerion}
	Let $K$ be a field and $\sigma \in \Aut(K)$. If there exists a proper subfield $k \subset K$ such that $\sigma x / x \in k^\times$ for all $x \in K^\times$, then $\sigma = \id$.
\end{lem}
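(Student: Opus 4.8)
The plan is to argue by contradiction: suppose $\sigma \neq \id$, and produce two inequivalent rank $1$ valuations on $K$ both of whose decomposition groups over $k$ contain $\sigma$ — wait, that uses the wrong lemma (that one is stated over an algebraic closure). Let me reconsider: the natural strategy here is to show directly that the hypothesis forces $\sigma$ to fix a transcendence basis of $K$ over its prime field, and then fix everything. So first I would observe that the map $\chi \colon K^\times \to k^\times$, $x \mapsto \sigma x / x$, is a group homomorphism (since $\sigma$ is a ring automorphism, $\sigma(xy)/(xy) = (\sigma x/x)(\sigma y/y)$), and that $\chi$ kills $k^\times$ because $\sigma$ need not fix $k$ pointwise — hmm, actually the hypothesis does not say $\sigma|_k = \id$, only that $\sigma x/x \in k^\times$. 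But applying the hypothesis to $x \in k^\times$ gives $\sigma x / x \in k^\times$, which is automatic, so we do get $\sigma(k^\times) \subseteq k^\times$, i.e.\ $\sigma(k) = k$.

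**Key step: additivity.** The crucial point is to exploit additivity. For $x, y \in K^\times$ with $x + y \neq 0$, write $a = \sigma x / x$, $b = \sigma y/y$, $c = \sigma(x+y)/(x+y)$, all in $k^\times$. Then $\sigma(x+y) = c(x+y)$ and also $\sigma(x+y) = \sigma x + \sigma y = ax + by$, so $cx + cy = ax + by$, i.e.\ $(a - c)x = (c - b)y$. Hence either $a = b = c$, or else $x/y = (c-b)/(a-c) \in k$. So I would conclude: \emph{for any two $K$-linearly independent elements $x, y \in K^\times$, we have $\sigma x/x = \sigma y/y$} — more precisely, for any $x,y$ with $x/y \notin k$ the ratios agree. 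Since $K$ has dimension $\geq 1$ over $k$ it is infinite-dimensional as a $k$-vector space (a transcendental extension), so we can connect any two nonzero elements through a chain avoiding $k$-proportionality, forcing $\chi$ to be a \emph{constant} $\lambda \in k^\times$ on all of $K^\times$. [If $\dim_k K < \infty$ one must be slightly more careful, but here the field is a genuine function field of positive dimension so this is fine; alternatively: pick $t \in K$ transcendental over $k$; for any $x \in K^\times$, either $x/t \notin k$ giving $\chi(x) = \chi(t)$, or $x \in kt$ giving $\chi(x) = \chi(t)$ directly.]

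**Finishing.** Now $\sigma x = \lambda x$ for all $x \in K^\times$ with a single $\lambda \in k^\times$. Taking $x = 1$ gives $\lambda = \sigma(1) = 1$, so $\sigma x = x$ for all $x \in K^\times$, hence $\sigma = \id$, contradiction. In fact this shows the conclusion unconditionally and cleanly: the hypothesis that the cocycle $x \mapsto \sigma x/x$ lands in $k^\times$ is extremely rigid because it must simultaneously be multiplicative and compatible with addition, and the only additive-multiplicative "scalars" available are honest field elements, pinned down to $1$ by $\sigma(1)=1$.

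**Main obstacle.** The only subtlety I anticipate is the step asserting $\chi$ is globally constant: I must ensure that arbitrary $x, y \in K^\times$ can be compared, i.e.\ handle the case $x/y \in k^\times$. But that case is immediate — if $y' = xy^{-1} \in k^\times$ then $\chi(x) = \chi(y')\chi(y) = \chi(y)$ since $\chi(k^\times)$... no wait, $\chi$ need not be trivial on $k^\times$. So instead: if $x/y \in k^\times$, write $x = \mu y$ with $\mu \in k^\times$; then $\sigma x = \sigma(\mu)\sigma(y)$, and since $\sigma(\mu) \in k^\times$ and $\sigma(y) = \chi(y) y$ we get $\sigma x/x = \sigma(\mu)\mu^{-1}\chi(y)$, which requires knowing $\sigma(\mu) = \mu$. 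So the genuinely clean route is: first use a transcendental $t$ and an element $t' \in K \setminus k[t]_{\text{small}}$... — rather, pick $t$ transcendental and note $t$ and $t+1$ are $k$-linearly independent and neither is a $k$-multiple of the other, so $\chi(t) = \chi(t+1)$; for general $x \in K^\times$, if $x \notin kt$ then $\chi(x) = \chi(t)$, while if $x \in kt$, say $x = \mu t$, then $x \notin k\cdot t^2$ (as $t$ is transcendental) so $\chi(x) = \chi(t^2) = \chi(t)^2$; combined with $\chi(x)=\chi(\mu t)$ and multiplicativity this forces $\chi(\mu) \chi(t) = \chi(t)^2$, hence $\chi(\mu) = \chi(t)$ for all $\mu \in k^\times$, and then $\chi$ is constant $= \chi(t) =: \lambda$ on $K^\times$. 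Finally $\lambda = \chi(1) = \sigma(1)/1 = 1$. This is the one place requiring care, but it is elementary.
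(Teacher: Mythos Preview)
Your core argument---use additivity to force the cocycle $\chi(x)=\sigma x/x$ to be constant, then evaluate at $1$---is exactly the paper's proof. The paper writes the identity $a(x)x+a(y)y=a(x+y)(x+y)$ and reads off $a(x)=a(x+y)=a(y)$ whenever $x,y$ are $k$-linearly independent, just as you do.

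Where you diverge is in the dependent case, and here you make life harder than necessary. Your first instinct (``connect any two nonzero elements through a chain avoiding $k$-proportionality'') is correct and is precisely what the paper does: given $x,y$ with $y\in kx$, choose any $z\in K^\times\setminus kx$---such $z$ exists simply because $k\subsetneq K$ means $\dim_k K\geq 2$---and conclude $\chi(x)=\chi(z)=\chi(y)$. No transcendental element, no $t^2$ trick, no worry about whether $\sigma$ fixes $k$ pointwise. Your detour through a transcendental $t$ is valid when such $t$ exists, but the lemma as stated assumes only a proper subfield, and your parenthetical ``if $\dim_k K<\infty$ one must be slightly more careful'' leaves that case unhandled. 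The two-line argument above closes it.
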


\begin{proof}
	Put $a(x) = \sigma x / x \in k^\times$. By additivity of $\sigma$, we have
	$$a(x) x + a(y) y = a(x+y) x + a(x+y) y$$
	for all $x,y \in K^\times$ with $x+y \neq 0$. For $k$-linearly independent $x$ and $y$, we obtain $a(x) = a(x+y) = a(y)$. For linearly dependent $x$ and $y$ we find $a(x) = a(y)$ as well, by comparing with an element of $K^\times$ which is linearly independent from $x$ and $y$. So we have $\sigma x = ax$ for some $a \in k^\times$ which does not depend on $x$. Since $\sigma(1) = 1$, we have $a = 1$.
\end{proof}

\begin{proof}[Proof of Theorem~\ref{thm-injectivity}]
	It suffices to show that if $\sigma \in \Aut(F)$ is an automorphism with $\sigma K = K$ for which $\sigma^{-1} (-)\sigma = \id$ in $\Aut(U_K)$, then $\sigma = \id$. Consider a rank 1 valuation $v$ of $F$ which is discrete on $K$. Its decomposition group $Z_v$ over $K$ is nontrivial since $v$ ramifies in $K^\sep|K$, e.g.\ by adjoining $\ell$-th roots of a uniformiser for $\ell \neq \Char(k)$. The decomposition group of $\sigma^{-1}v$ is $Z_{\sigma^{-1}v} = \sigma^{-1}Z_v \sigma = Z_v$, hence $\sigma^{-1}v = \lambda v$ for some $\lambda > 0$ by Lemma~\ref{lem-dec-groups-trivial intersection}. Since $\sigma K = K$, the valuations $v$ and $\lambda v$ have the same value group on $K$. As this value group is discrete, we must have $\lambda = 1$, thus $\sigma^{-1}v = v$. So we have $v(\sigma x / x) = 0$ for all $x \in F^\times$ and all rank 1 valuations $v$ on $F$ which are discrete on $K$. We claim that this implies $\sigma x / x \in k^\times$, so that we are done by Lemma~\ref{lem-autom-id-citerion}. 
	Indeed, otherwise we find a transcendence basis $T = (t_1,\ldots,t_n)$ of $F|k$ with $t_1 = \sigma x/x$ and a rank 1 valuation $v$ on $F$ extending the $t_1$-adic valuation on $k(T)$. Then $v$ is discrete on $k(T)$ and since the composite field $K(T)$ is a common finite extension of $K$ and $k(T)$, it is also discrete on $K$. But by construction $v(\sigma x/x) \neq 0$, contradiction!
\end{proof}

\section{A Galois-Type Correspondence for Transcendental Field Extensions}
\label{sec-galois-type-correspondence}
Let $k$ be a field and $F|k$ a field extension with $F$ algebraically closed. We prove a generalised Galois correspondence for such extensions and apply it in the case where $k$, too, is algebraically closed and $\trdeg(F|k) = 1$. Let $G_{F|k} = \Aut(F|k)$ be endowed with the compact-open topology for discrete $F$, so that a basis of open neighbourhoods of the identity is given by the subgroups $\Aut(F|K)$ with $K|k$ a finitely generated subextension of $F|k$. For such $K$, we denote the open subgroup $\Aut(F|K) \leq G_{F|k}$ by $U_K$. The group $G_{F|k}$ is a Hausdorff and totally disconnected topological group, since $\sigma \in U_{k(x)}$ for all $x \in F$ implies $\sigma = \id$.

In positive characteristic, one has to deal with the phenomenon that purely inseparable extensions are not visible in field automorphism groups. We therefore consider an equivalence relation on the set of subfields of $F$, which we call perfect equivalence.

\begin{defi}
	\label{def-purely-inseparable-equivalence}
	For a subfield $L$ in $F$, its \defstyle{perfect closure} $L^i$ in $F$ consists of all elements in $F$ that are purely inseparable over $L$. If $\Char k = p > 0$, it is given by
	$$L^i = L^{p^{-\infty}} = \left\{ x \in F : x^{p^n} \in L \text{ for some } n\in \bN \right\}.$$
	We call two subfields $L_1,L_2$ of $F$ \defstyle{perfectly equivalent} if $L_1^i = L_2^i$.
\end{defi}

In characteristic $p > 0$, since $p$th roots are unique, the image of $x \in F$ under an automorphism is uniquely determined by the image of $x^{p^n}$, hence $\Aut(F|L) = \Aut(F|L^i)$ for all subfields $L$ of $F$. Automorphism groups of field extensions are therefore "blind" towards purely inseparable extensions in the sense that perfectly equivalent subfields $L_1$ and $L_2$ satisfy $\Aut(F|L_1) = \Aut(F|L_2)$. However, a subfield $L$ of $F$ can be recovered from $\Aut(F|L)$ up to perfect equivalence. 

\begin{lem}
	\label{lem-fixed-field}
	Let $F$ be an algebraically closed field. Then for all subfields $L \subseteq F$, we have $F^{\Aut(F|L)} = L^i$.
\end{lem}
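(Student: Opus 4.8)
The plan is to establish the two inclusions separately, the inclusion $L^i \subseteq F^{\Aut(F|L)}$ being the formal one: by the remark preceding the lemma one has $\Aut(F|L) = \Aut(F|L^i)$, and an element of $L^i$ is fixed by every automorphism fixing $L$ (such an $x$ satisfies $x^{p^n}\in L$ for some $n$, and $T^{p^n}-x^{p^n}$ has $x$ as its only root; in characteristic $0$ this is just $x\in L$), so $L^i \subseteq F^{\Aut(F|L^i)} = F^{\Aut(F|L)}$. The content of the lemma is therefore the reverse inclusion $F^{\Aut(F|L)} \subseteq L^i$, equivalently: every $x \in F$ that is not purely inseparable over $L$ is moved by some $\tau \in \Aut(F|L)$.

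To do this I would fix the algebraic closure $\tilde L$ of $L$ inside $F$. Being relatively algebraically closed in the algebraically closed field $F$, it is itself algebraically closed, hence an algebraic closure of $L$, and $F$ is an algebraic closure of the rational function field $\tilde L(B)$ for any transcendence basis $B$ of $F|\tilde L$. The key point to record is the resulting \emph{extension principle}: every automorphism of $\tilde L(B)$ fixing $L$ extends to an automorphism of $F$. In particular the restriction map $\Aut(F|L) \to \Aut(\tilde L|L)$ is surjective, and every $\tilde L$-automorphism of $\tilde L(B)$ lifts to an element of $\Aut(F|L)$.

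With this in hand the two cases are short. If $x$ is transcendental over $L$ it is transcendental over $\tilde L$ as well, so I complete $\{x\}$ to a transcendence basis $B \ni x$ of $F|\tilde L$; the $\tilde L$-automorphism of $\tilde L(B)$ sending $x \mapsto x+1$ and fixing $B \setminus \{x\}$ lifts to some $\tau \in \Aut(F|L)$ with $\tau x = x+1 \neq x$. Hence $F^{\Aut(F|L)} \subseteq \tilde L$. If instead $x \in \tilde L$ is not purely inseparable over $L$, its minimal polynomial over $L$ has the form $g(T^{p^e})$ with $g$ separable of degree $\geq 2$ (with $e=0$ in characteristic zero), so it has a second root $x' \neq x$ in $\tilde L$, and the $L$-isomorphism $L(x) \cong L(x')$ extends to an automorphism of $\tilde L$, which by the extension principle lifts to $\Aut(F|L)$ and moves $x$. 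Equivalently, surjectivity of $\Aut(F|L)\to\Aut(\tilde L|L)$ gives $F^{\Aut(F|L)}\cap\tilde L = \tilde L^{\Aut(\tilde L|L)}$, which is the purely inseparable closure of $L$ in $\tilde L$, namely $L^i$. Combining with $F^{\Aut(F|L)}\subseteq\tilde L$ yields $F^{\Aut(F|L)} \subseteq L^i$, hence equality.

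I do not expect a serious obstacle. The one place to be careful is the extension principle itself — verifying that automorphisms of the relatively algebraically closed subfield $\tilde L$, and of the purely transcendental extension $\tilde L(B)$, genuinely extend to $F$, so that the restriction to $\tilde L$ is onto and the shift $x \mapsto x+1$ is actually realised inside $\Aut(F|L)$. Everything else is the classical computation of the fixed field of the automorphism group of an algebraic extension together with the abundance of automorphisms of rational function fields.
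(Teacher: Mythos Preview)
Your proposal is correct and follows essentially the same approach as the paper: for $x \notin L^i$ you produce an $L$-isomorphism $L(x)\cong L(x')$ with $x'\neq x$ (taking $x'=x+1$ in the transcendental case and another root of the minimal polynomial in the algebraic case) and extend it to an automorphism of $F$. The only cosmetic difference is that the paper extends $L(x)\cong L(x')$ to $F$ in a single step by invoking equality of transcendence degrees of $F$ over $L(x)$ and $L(x')$, whereas you route the extension through the intermediate field $\tilde L$ and a chosen transcendence basis; the underlying mechanism is the same.
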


\begin{proof}
	The inclusion $(\supseteq)$ is trivial. If $x \in F$, but $x \not\in L^i$, there exists some $x' \in F \setminus \{x\}$ and an isomorphism $L(x) \cong L(x')$ over $L$, sending $x$ to $x'$; take $x' = x+1$ if $x$ is transcendental over $L$, and any root $x' \neq x$ of the (not purely inseparable) minimal polynomial of $x$ over $L$ if $x$ is algebraic. Since $F|L(x)$ and $F|L(x')$ have equal transcendence degree (possibly infinite), the isomorphism $L(x) \cong L(x')$ extends to an automorphism $\sigma$ of $F$. We have $\sigma \in \Aut(F|L)$, but $\sigma(x) \neq x$, therefore $x \not\in F^{\Aut(F|L)}$.
\end{proof}

\begin{thm}[Galois-Type Correspondence]
	\label{thm-galois}
	Let $F|k$ be a field extension with $F$ algebraically closed and let $G_{F|k} = \Aut(F|k)$. Then the map $L \mapsto \Aut(F|L)$ is injective up to perfect equivalence and restricts to bijections as follows:
	
	\begin{center}
		\begin{tikzpicture}[commutative diagrams/every diagram]
		\node (A1)
		{ $\left\{ \parbox{4.5cm}{\centering subfields $L$ in $F|k$, up to perfect equivalence } \right\}$ };
		
		\node (A2) [right = of A1]
		{ $\left\{ \parbox{4.3cm}{\centering closed subgroups of $G_{F|k}$ } \right\}$ };
		
		\node (AB1) [below = 0.15cm of A1, rotate = 90, xscale=1.8] {$\subseteq$};
		\node (AB2) [below = 0.5cm of A2, rotate = 90, xscale=2] {$\subseteq$};
		
		\node (B1) [below = 0.4cm of A1]
		{ $\left\{ \parbox{4.5cm}{\centering subfields $L$ in $F|k$ with $\overline{L} = F$, up to perfect equivalence } \right\}$ };
		
		\node (B2) [right = of B1]
		{ $\left\{ \parbox{4.3cm}{\centering compact subgroups of $G_{F|k}$ } \right\}$ };
		
		\node (BC1) [below = 0.25cm of B1, rotate = 90, xscale=1.8] {$\subseteq$};
		\node (BC2) [below = 0.7cm of B2, rotate = 90, xscale=2] {$\subseteq$};
		
		\node (C1) [below = 0.6cm of B1]
		{ $\left\{ \parbox{4.5cm}{\centering finitely generated subfields $L$ in $F|k$ with $\overline{L} = F$, up to perfect equivalence } \right\}$ };
		
		\node (C2) [right = of C1]
		{ $\left\{ \parbox{4.3cm}{\centering compact open subgroups of $G_{F|k}$ } \right\}$ };
		
		\path[commutative diagrams/.cd, every arrow, every label]
		(A1) edge[commutative diagrams/hook] (A2)
		(B1) edge node {$\sim$} (B2)
		(C1) edge node {$\sim$} (C2);
		
		\end{tikzpicture}
	\end{center}
\end{thm}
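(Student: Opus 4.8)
The plan is to reduce everything to Lemma~\ref{lem-fixed-field} together with the observation that, since $F$ carries the discrete topology, the compact--open topology on $G_{F|k}$ is the topology of pointwise convergence; in particular a compact subset of $F$ is finite. Injectivity up to perfect equivalence is then immediate: if $\Aut(F|L_1) = \Aut(F|L_2)$, then $L_1^i = F^{\Aut(F|L_1)} = F^{\Aut(F|L_2)} = L_2^i$ by Lemma~\ref{lem-fixed-field}. Each $\Aut(F|L) = \bigcap_{x \in L} \{\sigma : \sigma x = x\}$ is an intersection of closed sets, hence closed, so the map does take values among the closed subgroups, giving the top (non-surjective) arrow.

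Next I would treat the middle row. If $\overline{L} = F$, then $\Aut(F|L)$ is the set of all $L$-algebra homomorphisms $F \to F$: such a homomorphism is injective, and its image is an algebraically closed field containing $L$ inside $\overline{L} = F$, hence equals $F$. This set is a closed subset of $\prod_{x \in F} C_x$, where $C_x \subseteq F$ is the finite set of $L$-conjugates of $x$, and this product is compact by Tychonoff; so $\Aut(F|L)$ is compact. Conversely, let $H \leq G_{F|k}$ be compact and set $L = F^H$; by Lemma~\ref{lem-fixed-field} applied to $L$ we have $L = F^{\Aut(F|L)}$, so in particular $L = L^i$. For $x \in F$ the orbit $Hx$ is a continuous image of the compact group $H$ in the discrete space $F$, hence finite, so $x$ is a root of $\prod_{y \in Hx}(X-y) \in L[X]$; thus $F|L$ is algebraic and $\overline{L} = F$. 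Then $\Aut(F|L)$ is profinite, $H$ is a closed subgroup of it, and the fixed field of $H$ in any finite normal subextension $M|L$ of $F|L$ is $L$, which coincides with the purely inseparable closure of $L$ in $M$ because $L = L^i$; hence $H$ surjects onto every $\Aut(M|L)$ by the Galois theory of finite normal extensions, so $H = \Aut(F|L)$ by the Krull correspondence. Combined with injectivity this gives the bijection in the middle row, with inverse $H \mapsto F^H$.

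Finally the bottom row. If $L$ is finitely generated over $k$, then $\Aut(F|L) = U_L$ is open by the definition of the topology, and compact if in addition $\overline{L} = F$ by the middle row; injectivity is Lemma~\ref{lem-fixed-field} once more. For surjectivity, let $H$ be compact and open. Openness provides $U_K \leq H$ for some finitely generated subextension $K|k$; then $U_K$ is open, hence of finite index, in the compact group $H$, hence compact, so $\overline{K} = F$ by the middle row. Replacing $K$ by the compositum of its finitely many $H$-translates $\sigma(K)$, $\sigma \in H$ --- which is again finitely generated over $k$ and again has algebraic closure $F$ --- we may assume $K$ is $H$-stable, so that $U_K \trianglelefteq H$ with $[H:U_K] = d < \infty$ and $H$ acts on $K$ over $k$ through a faithful action of $\Gamma := H/U_K$. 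Put $L = K^\Gamma$; by Artin's theorem $K|L$ is finite, so $L$ is finitely generated over $k$, and $\overline{L} = \overline{K} = F$. Since $U_K$ acts trivially on $K^i$ and $K^i$ is $H$-stable, we have $F^H = (K^i)^\Gamma$, and a short computation using uniqueness of $p$-power roots gives $(K^i)^\Gamma = (K^\Gamma)^i = L^i$; hence $\Aut(F|L) = \Aut(F|F^H) = H$ by the middle row, and the perfect-equivalence class of $L$ maps to $H$.

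The one genuinely delicate ingredient is the bottom row in positive characteristic: one must exhibit a \emph{finitely generated} field with prescribed automorphism group, and the naive candidate $F^H$ is typically not finitely generated --- for instance it can be the perfect closure of a finitely generated field. The device of first enlarging $K$ to be $H$-stable and then descending to $\Gamma$-invariants circumvents this. Apart from that, the proof is the Krull Galois correspondence together with the remark that compactness of $H$ forces all its orbits on $F$ to be finite; the bookkeeping with perfect closures is needed throughout but is elementary at each step.
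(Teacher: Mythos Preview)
Your proof is correct and follows the same overall strategy as the paper: injectivity via Lemma~\ref{lem-fixed-field}, the middle bijection via the observation that compactness forces finite orbits and hence reduces to classical (infinite) Galois theory, and the bottom bijection by producing a finitely generated subfield inside a given compact open $H$. The only substantive difference is in this last step. The paper simply takes $F^H \cap K$: since $F^H \subseteq K^i$ (from $U_K \subseteq H$ and Lemma~\ref{lem-fixed-field}), the extension $F^H \mid F^H \cap K$ is purely inseparable, so $\Aut(F|F^H\cap K)=\Aut(F|F^H)=H$, and $F^H\cap K$ is finitely generated as a subextension of $K|k$. Your route---enlarging $K$ to be $H$-stable and then descending via Artin's lemma to $L=K^{H/U_K}$, together with the computation $(K^i)^\Gamma=(K^\Gamma)^i$---is a bit longer but equally valid, and arguably makes the group-theoretic structure (a finite quotient acting on a finitely generated field) more visible. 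The paper's intersection trick is shorter and avoids the need to first normalise $U_K$ in $H$.
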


\begin{proof}
	Write $G := G_{F|k}$. For every subfield $L$ in $F|k$, the group
	$$\Aut(F|L) = \bigcap_{x \in L} U_{k(x)}$$
	is closed in $G$. If $F|L$ is algebraic, then the $\Aut(F|L)$-orbit of every $x \in F$ is finite and the product $\prod (X-x_i)$ with $x_i$ running through this orbit is a separable polynomial annihilating $x$ with coefficients in $F^{\Aut(F|L)} = L^i$, so that $F$ is separable and hence Galois over $L^i$. Thus $\Aut(F|L) = \Gal(F|L^i)$ is compact if $\overline{L} = F$, and compact open if in addition $L$ is finitely generated over $k$.
	
	Suppose $H$ is a compact subgroup of $G$. Then for every $x \in F$, the orbit $Hx$ is compact and discrete, hence finite, so $x$ is a root of a separable polynomial with coefficients in $F^H$. Thus $F|F^H$ is Galois and we have $H = \Aut(F|H)$ by Galois theory. This establishes the middle bijection.
	
	It remains to show that if $H \leq G$ is a compact open subgroup, then there exists a finitely generated subfield $K$ in $F|k$ with $H = \Aut(F|K)$. Since the sets $U_K$ with $K|k$ finitely generated form a neighbourhood basis of the identity, there exists such $K$ with $U_K \subseteq H$. Taking fixed fields, we get $F^H \subseteq K^i$. As $F^H|F^H \cap K$ is purely inseparable and $H$ is compact, we have
	$$\Aut(F|F^H \cap K) = \Aut(F|F^H) = H,$$
	and $F^H \cap K$ is finitely generated over $k$, being contained in the finitely generated extension $K|k$.
\end{proof}

\begin{rmks*} 
	\begin{enumeral} [itemsep=1ex]
		\item{The association $L \mapsto \Aut(F|L)$ is compatible with the $G_{F|k}$-actions in the sense that
			\begin{equation}
				\label{eq-conj-corresp}
				\Aut(F|\sigma L) = \sigma \Aut(F|L) \sigma^{-1}
			\end{equation}
			for all subextensions $L$ of $F|k$ and all $\sigma \in G_{F|k}$. Moreover, the $G_{F|k}$-action on the set of subextensions of $F|k$ is compatible with perfect equivalence since $\sigma(L)^i = \sigma(L^i)$.}
		\item{
			\label{rem-normaliser-quotient}
			Let $L$ be a subextension of $F|k$ and $H = \Aut(F|L)$. Then the normaliser of $H$ in $G_{F|k}$ is given by
			$
			N_{G_{F|k}}(H) = \left\{ \sigma \in G_{F|k} \suchthat \sigma L^i = L^i \right\}
			$
			and the restriction homomorphism $N_{G_{F|k}}(H) \to \Aut(L^i|k)$ induces an isomorphism of topological groups
			$$ N_{G_{F|k}}(H)/H \cong \Aut(L^i|k).$$}
		\item{The Galois correspondence is inclusion-reversing in the sense that
			$$L_1^i \subseteq L_2^i \; \Leftrightarrow \; \Aut(F|L_2) \subseteq \Aut(F|L_1)$$
			for all subfields $L_1$ and $L_2$ of $F|k$.}
		\item{If $L_1 \subseteq L_2$ is an algebraic extension of subfields of $F|k$, the index $(\Aut(F|L_1) : \Aut(F|L_2))$ equals the separable degree $[L_2 : L_1]_s$ since the left cosets are in canonical bijection with the $L_1$-embeddings $L_2 \hookrightarrow \overline{L_1}$.}
		\item{If the transcendence degree of $F|k$ is finite, then every finitely generated subextension $K$ is contained in a finitely generated extension $L$ with $\overline{L} = F$. Otherwise, there are no subextensions $L$ that are both finitely generated over $k$ and have $\overline{L} = F$. Consequently, $G_{F|k}$ is locally compact if and only if $F|k$ has finite transcendence degree. If moreover $k$ algebraically closed, the transcendence degree of $F|k$ can be recovered: let $U_K \leq G_{F|k}$ be any compact open subgroup, corresponding to a finitely generated subfield $K|k$ with $\overline{K} = F$. Then $\trdeg(F|k) = \cd_\ell(U_K)$ for all primes $\ell \neq \Char(k)$ (\cite{serre-galois-coh}, Ch.~II, Proposition~11). \label{recover_transcendence_degree}}
		\item{In general, there exist closed subgroups of $G \coloneqq G_{F|k}$ that do not arise as $\Aut(F|K)$ for some subfield $K$ of $F|k$. A subgroup $H \leq G$ arises in this way if and only if the inclusion $H \subseteq \Aut(F|F^H)$ is an equality. For a counterexample, consider the closed subgroup $G^\circ \subseteq G$ topologically generated by all compact subgroups. If $T$ is a transcendence basis of $F|k$, so is $T^n := \{t^n\}_{t \in T}$ for $n \geq 1$. Thus, the groups $\Aut(F|k(T^n))$ are all compact, hence $F^{G^\circ} = \bigcap_n k(T^n)^i = k^i$. However, if $F|k$ has finite transcendence degree $\geq 1$ with $x \in F$ transcendental over $k$, then $G^\circ$ is a proper subgroup of $G$ as $G^\circ$ is unimodular but any automorphism $\sigma \in G$ extending $k(x) \cong k(x^2)$ has Haar modulus $\Delta(\sigma) = 2$. 
			
		One can show that the subgroups of the form $\Aut(F|K) \subseteq G$ are stable under passage to closed supergroups with compact quotient. For subgroups this is false: assuming $1 \leq \trdeg(F|k) < \infty$, the Haar modulus induces a surjective homomorphism $\Delta: G/G^\circ \twoheadrightarrow \bQ_{>0}$, yielding many finite index subgroups $H \subsetneq G$ containing $G^\circ$. They all satisfy $F^H = k^i$ and hence $\Aut(F|F^H) = G$, so they are not of the form $\Aut(F|K)$.}
	\end{enumeral}
\end{rmks*}

Parts of the Galois-type correspondence appear in the literature as~\cite{jacobson}, p.\ 151, Exercise~5; \cite{shimura}, Propositions~6{.}11 and 6{.}12; \cite{shapiro-shafarevic}, §3, Lemma~1. A statement very close to ours in that it encompasses the case of positive characteristic is contained in \cite{rovinsky-motives}, Appendix~B, under the slightly stronger assumptions that $k$ be algebraically closed and that $\trdeg(F|k)$ be countable and $\geq 1$.


We now apply the Galois-type correspondence to the situation at hand where $k$ is algebraically closed and $\trdeg(F|k) = 1$. When we use the term \defstyle{function field in $F$}, we shall always mean one of dimension $1$, in other words we exclude the trivial function field $k$. The Galois-type correspondence shows that the function fields $K|k$ in $F$ are encoded in $G_{F|k}$ as the compact open subgroups $U_K$, up to perfect equivalence. By Remark~\ref{recover_transcendence_degree} above, the transcendence degree of $F|k$ is encoded as the $\ell$-cohomological dimension for $\ell \neq \Char(k)$ of any such $U_K$.

\begin{prop}
	\label{prop-function-field-bijection}
	In the situation of Theorem~\ref{thm-main-theorem}, let $\lambda: G_{F|k} \isomto G_{F'|k'}$ be an isomorphism. Then also $\trdeg(F'|k') = 1$ and $\lambda$ induces a bijection
	\begin{center}
		\begin{tikzpicture}
		\node (A) {$\left\{ \parbox{4.5cm}{\centering function fields $K'|k'$ in $F'$, up to perfect equivalence } \right\}$};
		\node (B) [right = of A] {$\left\{ \parbox{4.5cm}{\centering function fields $K|k$ in $F$, up to perfect equivalence } \right\}$};
		\path[commutative diagrams/.cd, every arrow, every label] (A) edge node {$\sim$} (B);
		\end{tikzpicture}
	\end{center}
	given by $K' \mapsto K$ whenever $\lambda^{-1}(U_{K'}) = U_K$. \qed
\end{prop}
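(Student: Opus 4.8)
The plan is to deduce the statement from the Galois-type correspondence (Theorem~\ref{thm-galois}) together with the recovery of the transcendence degree via cohomological dimension recorded in Remark~\ref{recover_transcendence_degree}, by simply transporting subgroups along $\lambda$. The first observation to set up is that, when $\trdeg(F|k)=1$, the compact open subgroups of $G_{F|k}$ are \emph{exactly} the subgroups $U_K=\Aut(F|K)$ attached to the function fields $K|k$ in $F$, and this identification matches the function fields in $F$ up to perfect equivalence with the compact open subgroups of $G_{F|k}$. Indeed, by the bottom row of Theorem~\ref{thm-galois} the compact open subgroups correspond to the finitely generated subfields $L$ of $F|k$ with $\overline{L}=F$; and since $\trdeg(F|k)=1$, a finitely generated $L$ satisfies $\overline{L}=F$ if and only if $\trdeg(L|k)=1$, i.e.\ $L$ is a function field in $F$ in the sense fixed above. (The trivial field $k$ does not interfere: $\overline{k}\neq F$; equivalently $G_{F|k}$ itself is not compact, since $F^{G_{F|k}}=k^i=k$ is not algebraically closed, by Lemma~\ref{lem-fixed-field}.)

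Next I would pin down $\trdeg(F'|k')=1$; this is the one step that requires a genuine input rather than formal transport. Fix $x\in F$ transcendental over $k$, so that $U_{k(x)}$ is a nontrivial compact open subgroup of $G_{F|k}$ (being the Galois group of the infinite extension $F|k(x)$). Then $V':=\lambda(U_{k(x)})$ is a nontrivial compact open subgroup of $G_{F'|k'}$, because $\lambda$ is an isomorphism of topological groups. In particular $G_{F'|k'}\neq 1$, hence $F'\neq k'$ and $\trdeg(F'|k')\geq 1$; moreover the existence of a compact open subgroup makes $G_{F'|k'}$ locally compact, hence $\trdeg(F'|k')<\infty$ by Remark~\ref{recover_transcendence_degree}. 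By Theorem~\ref{thm-galois} we may write $V'=U_{K'}$ for some finitely generated $K'|k'$ with $\overline{K'}=F'$. Choosing a prime $\ell$ different from both $\Char(k)$ and $\Char(k')$ and applying Remark~\ref{recover_transcendence_degree} on both sides, together with the invariance of $\cd_\ell$ under the isomorphism $U_{k(x)}\cong V'=U_{K'}$, we obtain
$$\trdeg(F'|k')=\cd_\ell(U_{K'})=\cd_\ell(U_{k(x)})=\trdeg(F|k)=1.$$

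Finally, now that $\trdeg(F'|k')=1$, the first paragraph applies verbatim to $F'|k'$ as well. The isomorphism $\lambda$ restricts to a bijection $W'\mapsto\lambda^{-1}(W')$ from the set of compact open subgroups of $G_{F'|k'}$ onto the set of compact open subgroups of $G_{F|k}$; composing with the two instances of the correspondence ``function fields up to perfect equivalence $\leftrightarrow$ compact open subgroups'' yields the desired bijection, sending a function field $K'|k'$ in $F'$ to the function field $K|k$ in $F$ determined up to perfect equivalence by $\lambda^{-1}(U_{K'})=U_K$. Well-definedness and bijectivity of this assignment are immediate from the injectivity of $L\mapsto\Aut(F|L)$ up to perfect equivalence in Theorem~\ref{thm-galois}. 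As indicated, the only non-formal ingredient is the determination of $\trdeg(F'|k')$, which ultimately rests on the cohomological-dimension computation cited in Remark~\ref{recover_transcendence_degree}; everything else is bookkeeping along the Galois-type correspondence.
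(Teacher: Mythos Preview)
Your proof is correct and follows essentially the same approach as the paper, which simply observes (in the paragraph preceding the proposition) that the Galois-type correspondence identifies function fields up to perfect equivalence with compact open subgroups, and that Remark~\ref{recover_transcendence_degree} recovers the transcendence degree. One minor slip: in your parenthetical you write that $F^{G_{F|k}}=k^i=k$ ``is not algebraically closed'', but $k$ \emph{is} algebraically closed by hypothesis; the correct reason $G_{F|k}$ is not compact is that $\overline{k}=k\neq F$, so $k$ does not satisfy the condition $\overline{L}=F$ in the middle row of Theorem~\ref{thm-galois}. This does not affect the argument, since you already state $\overline{k}\neq F$ just before.
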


We have the an explicit description of perfect equivalence for 1-dimensional function fields.
\begin{prop}[\cite{hartshorne} IV, Proposition~2.5]
	Let $F|k$ be an extension of algebraically closed fields of characteristic $p > 0$ and let $K|k$ be a 1-dimensional function field in $F$. Then for each $n \in \bN_0$, the extension $K^{p^{-n}}|K$ is the unique purely inseparable extension of $K$ in $F$ of degree $p^n$. Moreover, every function field in $F$ perfectly equivalent to $K$ is of the form $K^{p^{n}}$ for some $n \in \bZ$. In particular, they form an infinite field tower
	\label{prop-purely-inseparable-equivalence}
	\[
	\pushQED{\qed} 
	\ldots \subset K^{p^2} \subset K^p \subset K \subset K^{p^{-1}} \subset K^{p^{-2}} \subset \ldots. \qedhere
	\popQED
	\]
\end{prop}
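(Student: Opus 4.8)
The plan is to deduce all three assertions from the single computation $[K:K^p]=p$. Since $k$ is algebraically closed, hence perfect, the one-dimensional function field $K|k$ admits a separating transcendence basis, so $K$ is a finite separable extension of some $k(t)$ with $t$ transcendental over $k$. Applying the Frobenius $\Frob\colon F\to F$, which carries $k(t)$ isomorphically onto $k(t^p)$ because $k=k^p$, gives $[K^p:k(t^p)]=[K:k(t)]$; combining this with the tower $K\supseteq k(t)\supseteq k(t^p)$ yields $[K:K^p]=[k(t):k(t^p)]=p$. Iterating and transporting along powers of $\Frob$, for every $n\in\bN_0$ we get $[K:K^{p^n}]=p^n$ and hence $[K^{p^{-n}}:K]=p^n$, where throughout $K^{p^j}:=\Frob^j(K)$ for $j\in\bZ$, so that $K^{p^{-n}}=\{x\in F: x^{p^n}\in K\}$ and $K^{p^i}\subseteq K^{p^j}$ precisely when $i\ge j$.

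For the uniqueness statement, $K^{p^{-n}}$ is itself a purely inseparable extension of $K$ of degree $p^n$ inside $F$, by the degree count and since $x^{p^n}\in K$ for all $x\in K^{p^{-n}}$. Conversely, let $L$ be any purely inseparable extension of $K$ inside $F$ with $[L:K]=p^n$. Writing $L=K(\alpha_1,\dots,\alpha_r)$ and letting $p^{e_i}=[K(\alpha_i):K]$ be the degree of the purely inseparable generator $\alpha_i$, we have $p^{e_i}\mid p^n$, so $e_i\le n$ and $\alpha_i^{p^n}\in K$ for each $i$; hence $L^{p^n}\subseteq K$, i.e.\ $L\subseteq K^{p^{-n}}$. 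Since $[L:K]=p^n=[K^{p^{-n}}:K]$, this forces $L=K^{p^{-n}}$.

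For the classification, recall that in characteristic $p$ one has $L^i=\bigcup_{m\ge 0}L^{p^{-m}}$. First, each $K^{p^n}$ (for $n\in\bZ$) is a function field in $F$ — it is $\Frob^n(K)$, finitely generated of transcendence degree $1$ over $k=\Frob^n(k)$ — and is perfectly equivalent to $K$ since $(K^{p^n})^i=\bigcup_m K^{p^{n-m}}=K^i$. Conversely, suppose $L|k$ is a function field in $F$ with $L^i=K^i$. As $L$ is finitely generated and $L\subseteq K^i=\bigcup_m K^{p^{-m}}$, we get $L\subseteq K^{p^{-m}}$ for some $m\ge 0$. The extension $K^{p^{-m}}|L$ is algebraic (both sides have transcendence degree $1$ over $k$) and finitely generated, hence finite; it is purely inseparable because $K^{p^{-m}}\subseteq K^i=L^i$. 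Writing $[K^{p^{-m}}:L]=p^s$, the same exponent bound as above gives $(K^{p^{-m}})^{p^s}\subseteq L$, and $(K^{p^{-m}})^{p^s}=\Frob^s\Frob^{-m}(K)=K^{p^{s-m}}$, so $K^{p^{s-m}}\subseteq L\subseteq K^{p^{-m}}$. Now $[K^{p^{-m}}:K^{p^{s-m}}]=[K:K^{p^s}]=p^s=[K^{p^{-m}}:L]$, so multiplicativity of degrees forces $[L:K^{p^{s-m}}]=1$, i.e.\ $L=K^{p^{s-m}}$ is of the form $K^{p^n}$ with $n=s-m\in\bZ$.

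Finally, the chain $\dots\subset K^{p^2}\subset K^p\subset K\subset K^{p^{-1}}\subset K^{p^{-2}}\subset\dots$: the inclusions hold because $\Frob(K)\subseteq K$, and each step is proper since $[K^{p^j}:K^{p^{j+1}}]=[K:K^p]=p>1$; by transitivity all the $K^{p^j}$ are pairwise distinct, so the tower is genuinely infinite in both directions. I do not foresee a serious obstacle here — the only substantial input is the perfectness of $k$, which supplies both $[K:K^p]=p$ and the separating transcendence basis, and everything else is degree bookkeeping together with the fact that $\Frob$ is a field automorphism of $F$ fixing $k$.
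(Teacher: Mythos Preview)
The paper does not give its own proof of this proposition; it simply cites Hartshorne~IV, Proposition~2.5 and places the \qedhere\ at the end of the statement. Your argument is correct and is essentially the standard one: the core input $[K:K^p]=p$ (obtained via a separating transcendence basis, using that $k$ is perfect) is precisely the content of the cited result in Hartshorne, and the remaining deductions---uniqueness of the degree-$p^n$ purely inseparable extension and the classification of perfectly equivalent function fields---are the expected degree bookkeeping. There is nothing to compare beyond noting that you have spelled out what the paper leaves to the reference.
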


Recall that for an algebraic field extension $L|K$ with relative separable closure $K \subseteq K_s \subseteq L$, the \defstyle{inseparable degree} of $L|K$ is defined as $[L : K]_i \coloneqq [L : K_s]$. 

\begin{defi}
	\label{def-gen-insep-degree}
	Let $F|k$ be an extension of algebraically closed fields and $K_1,K_2$ one-dimensional function fields in $F$ with $K_1^i \subseteq K_2^i$ (but not necessarily $K_1 \subseteq K_2$). If $\Char(k) = p > 0$, we define the \defstyle{generalised inseparable degree} $[K_2 : K_1]_i \in p^{\bZ}$ to be $p^{-n}[K_2 : K_1^{p^n}]_i$ where $n$ is sufficiently large such that $K_1^{p^n} \subseteq K_2$. If $\Char(k) = 0$, we set $[K_2 : K_1]_i = 1$.
\end{defi}

We note the following properties of the generalised inseparable degree:
\begin{enumeral}
	\item{For $K_1^i \subseteq K_2^i \subseteq K_3^i$ one has $[K_3 : K_1]_i = [K_3 : K_2]_i [K_2:K_1]_i$.}
	\item{$K_1 \subseteq K_2$ iff $[K_2: K_1]_i \geq 1$.}
	\item{$K_2|K_1$ is separable iff $[K_2:K_1]_i = 1$.}
\end{enumeral}

For a function field $K|k$ in $F$, we can recover the automorphism group $\Aut(K^i|k)$ from $(G_{F|k},U_K)$ as the quotient $N_{G_{F|k}}(U_K)/U_K$ by Remark~\ref{rem-normaliser-quotient}. It is related to $\Aut(K|k)$ as follows.

\begin{lem}
	\label{lem-aut-Ki}
	Let $K$ be a one-dimensional function field over an algebraically closed field $k$. Then there is a canonical exact sequence
	$$1 \lto \Aut(K|k) \lto \Aut(K^i|k) \lto \bZ.$$
\end{lem}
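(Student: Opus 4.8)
The plan is to exhibit the maps in the sequence concretely and check exactness at each spot. First I would construct the map $\Aut(K|k)\to\Aut(K^i|k)$. Given $\tau\in\Aut(K|k)$, it extends uniquely to $K^i$: by Proposition~\ref{prop-purely-inseparable-equivalence} the subfield $K^i=K^{p^{-\infty}}$ is obtained by adjoining all $p$-power roots, and since $p$-th roots are unique in the perfect closure, there is exactly one way to extend $\tau$ to each $K^{p^{-n}}$, hence to $K^i$. This gives an injective homomorphism, injectivity being clear since an automorphism of $K^i$ restricting to $\id_K$ must be the identity ($K^i$ is purely inseparable over $K$ so again $p$-power roots are rigid). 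In characteristic $0$ we have $K^i=K$ and the sequence is trivially $1\to\Aut(K|k)\to\Aut(K|k)\to 0$, so from now on assume $\Char k=p>0$.

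Next I would build the map $\Aut(K^i|k)\to\bZ$. Fix $\rho\in\Aut(K^i|k)$. The perfect closure $K^i$ carries the chain $\ldots\subset K^{p}\subset K\subset K^{p^{-1}}\subset\ldots$, and $\rho$ must send this chain to a chain of function fields in $K^i$ perfectly equivalent to $K$; by Proposition~\ref{prop-purely-inseparable-equivalence} these are exactly the $K^{p^n}$, $n\in\bZ$, so there is a unique $n=n(\rho)\in\bZ$ with $\rho(K)=K^{p^{n}}$ (equivalently, $[\,\rho(K):K\,]_i=p^{-n}$ in the language of Definition~\ref{def-gen-insep-degree}). I claim $\rho\mapsto n(\rho)$ is a homomorphism to $\bZ$: if $\rho(K)=K^{p^m}$ and $\rho'(K)=K^{p^n}$, then since $\rho'$ is a $k$-automorphism of $K^i$ it commutes with the Frobenius $x\mapsto x^p$, hence $\rho'(K^{p^m})=\rho'(K)^{p^m}=K^{p^{n+m}}$, so $(\rho'\rho)(K)=K^{p^{n+m}}$ and $n(\rho'\rho)=n(\rho')+n(\rho)$. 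One should note the target is written $\bZ$, not claimed surjective: e.g. for $K=k(t)$ the Frobenius-type automorphism $t\mapsto t$, $t^{1/p}\mapsto t^{1/p}$ composed with suitable substitutions realises various $n$, but in general only a subgroup is hit.

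Finally I would check exactness in the middle, i.e.\ $\ker\big(\Aut(K^i|k)\to\bZ\big)=\im\big(\Aut(K|k)\to\Aut(K^i|k)\big)$. The inclusion $\supseteq$ is immediate: an extension of $\tau\in\Aut(K|k)$ fixes $K$ setwise, so $n=0$. Conversely, if $\rho\in\Aut(K^i|k)$ has $n(\rho)=0$, then $\rho(K)=K$, so $\rho|_K\in\Aut(K|k)$, and $\rho$ is the unique extension of $\rho|_K$ to $K^i$ by the rigidity of $p$-power roots used above; hence $\rho$ is in the image. This proves exactness. The only genuinely nontrivial ingredient is the classification of function fields in $F$ perfectly equivalent to $K$ as exactly $\{K^{p^n}\}_{n\in\bZ}$, which is Proposition~\ref{prop-purely-inseparable-equivalence} and is the step I would lean on most heavily; everything else is the formal manipulation of unique extension along purely inseparable towers and the compatibility of $k$-automorphisms with Frobenius.
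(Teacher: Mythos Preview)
Your proof is correct and follows essentially the same approach as the paper: the paper defines the second map as $\sigma \mapsto \log_p [\sigma(K):K]_i$ (the negative of your $n(\rho)$, which is immaterial) and simply asserts that exactness is readily checked, whereas you spell out the homomorphism property via commutation with Frobenius and the kernel computation explicitly. The aside about which $n$ are realised is a bit muddled but irrelevant since surjectivity is not claimed.
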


\begin{proof}
	The statement is trivial in characteristic zero, so assume $\Char(k) = p > 0$. Every automorphism of $K|k$ extends uniquely to $K^i$, whence the injective homomorphism $\Aut(K|k) \hookrightarrow \Aut(K^i|k)$. The second map is defined as $\sigma \mapsto \log_p\, [\sigma(K) : K]_i$ and the exactness is readily checked.
\end{proof}

\begin{proof}[Proof of Theorem~\ref{thm-main-theorem} $\Rightarrow$ Theorem~\ref{thm-function-field-main-theorem}]
	Consider the diagram
	\begin{diagram}
		\Isom(F'|K'|k, F|K|k) \dar[hook] \rar{\Phi}[swap]{(A)} & \Isom((G_{F|k}, U_K), (G_{F'|k'}, U_{K'})) \dar[hook] \\
		\Isom^i(F'|k', F|k) \rar{\Phi}[swap]{(B)} & \Isom(G_{F|k}, G_{F'|k'}).
	\end{diagram}
	The left vertical map is injective because the condition $\sigma(K') = K$ determines $\sigma$ uniquely among its Frobenius twists. Theorems~\ref{thm-function-field-main-theorem} and \ref{thm-main-theorem} assert the bijectivity of the top and bottom horizonal map, respectively. The square is cartesian: For $\sigma \in \Isom(F'|k', F|k)$ we have $\Phi(\sigma)(U_K) = U_{K'}$ if and only if $\sigma(K')^i = K^i$ (Galois correspondence), or equivalently if $\tilde{\sigma}(K') = K$ for some Frobenius twist $\tilde{\sigma} \sim \sigma$. 
\end{proof}

\begin{prop}
	\label{prop-injectivity}
	The map $\Phi$ of Theorem~\ref{thm-main-theorem} is injective.
\end{prop}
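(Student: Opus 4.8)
The plan is to reduce injectivity of $\Phi$ to a statement about the centraliser of $\Aut(F|k)$ inside $\Aut(F)$ and then to identify that centraliser by valuation theory. Given $\sigma_1,\sigma_2 \in \Isom(F'|k',F|k)$ with $\Phi(\sigma_1) = \Phi(\sigma_2)$, set $\rho := \sigma_2\circ\sigma_1^{-1} \in \Aut(F)$. Then $\rho(k)=k$, and the equality $\sigma_1^{-1}\tau\sigma_1 = \sigma_2^{-1}\tau\sigma_2$ for all $\tau\in\Aut(F|k)$ rearranges to $\rho\tau\rho^{-1}=\tau$. Since in $\Isom^i$ we identify $\sigma$ with $\Frob^n\circ\sigma$, it suffices to prove: \emph{any $\rho\in\Aut(F)$ with $\rho(k)=k$ that commutes with every element of $\Aut(F|k)$ equals $\Frob^n$ for some $n\in\bZ$ if $\Char k = p>0$, and equals $\id$ if $\Char k = 0$.}

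The heart of the matter is the following claim: for every rank $1$ valuation $v$ of $F$ that is trivial on $k$, one has $\rho v\sim v$. I would prove it by contradiction, combining Lemma~\ref{lem-dec-groups-trivial intersection} with the nontriviality of decomposition groups. Since $\rho(k)=k$, conjugation by $\rho$ preserves $\Aut(F|k)$ and, by hypothesis, acts trivially on it; feeding this into the defining action $\tau v = v\circ\tau^{-1}$ shows that $v$ and $\rho v$ (again trivial on $k$) have the same decomposition group over $k$, hence $Z_v(F|K) = Z_{\rho v}(F|K)$ for every function field $K|k$ in $F$. Fix such a $K$ (say $K=k(t)$ for a transcendental $t$, so $F=\overline{K}$). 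If $\rho v\not\sim v$, Lemma~\ref{lem-dec-groups-trivial intersection} forces this common group to be trivial; but $v|_K$ is a nontrivial valuation of the function field $K$, and it ramifies in $F|K$ — e.g. after adjoining $\ell$-th roots of a uniformiser for some $\ell\neq\Char k$, exactly as in the proof of Theorem~\ref{thm-injectivity} — so $Z_v(F|K)\neq 1$, a contradiction. Hence $\rho v\sim v$, i.e.\ $\rho$ preserves every valuation ring $\dO_v$ of this kind.

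Next I would normalise $\rho$ and determine it on a rational subfield. Fix a transcendental $t\in F$, so $F=\overline{k(t)}$. By the conjugation-equivariance \eqref{eq-conj-corresp} of the Galois-type correspondence, $\Aut(F|\rho(k(t))) = \rho\,\Aut(F|k(t))\,\rho^{-1} = \Aut(F|k(t))$, so $\rho(k(t))$ is perfectly equivalent to $k(t)$ by Theorem~\ref{thm-galois}; by Proposition~\ref{prop-purely-inseparable-equivalence} it equals $k(t)^{p^{n}}$ for some $n\in\bZ$ (with $n=0$ if $\Char k =0$). As $\Frob$ maps $k$ to itself, commutes with $\Aut(F|k)$, and satisfies $\Frob^{n}(k(t)) = k(t)^{p^{n}}$, replacing $\rho$ by $\Frob^{-n}\circ\rho$ preserves all the hypotheses, and it now suffices to prove this new $\rho$ is the identity; thus we may assume $\rho(k(t)) = k(t)$. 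Then $\rho|_{k(t)}\in\Aut(k(t))$, and by the claim above it preserves every place of $k(t)|k$, i.e.\ induces the identity on $\bP^1(k)$. It follows that $\rho|_{k(t)} = \id$: its Möbius component in $\Aut(k(t)) = \PGL(2,k)\rtimes\Aut(k)$ is pinned down — hence trivialised — by the fixing of the three places at $0,1,\infty$, after which $\rho|_{k(t)}$ acts on $k$ by a field automorphism moving no point of $\bP^1(k)$, so that automorphism is the identity.

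Finally, $\rho$ now fixes $k(t)$ pointwise, so $\rho\in\Aut(F|k(t))$, while by the claim $\rho v\sim v$ for every rank $1$ valuation $v$ of $F$ trivial on $k$; thus $\rho$ lies in $Z_v(F|k(t))$ for all such $v$. Choosing two inequivalent ones — the extensions to $F$ of the places of $\bP^1_k$ at $0$ and $\infty$ — Lemma~\ref{lem-dec-groups-trivial intersection} gives $\rho=\id$. Undoing the normalisation, the original $\rho$ is $\Frob^{n}$, hence $\sigma_2 = \Frob^{n}\circ\sigma_1$ and $\sigma_1 = \sigma_2$ in $\Isom^i$, which is the asserted injectivity. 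I expect the main obstacle to be the valuation step of the second paragraph — arranging the decomposition groups of $v$ and $\rho v$ to coincide and then playing this off against Lemma~\ref{lem-dec-groups-trivial intersection}; the remaining steps are bookkeeping with the Galois-type correspondence (Theorem~\ref{thm-galois}, Proposition~\ref{prop-purely-inseparable-equivalence}) and the classical structure of $\Aut(k(t))$.
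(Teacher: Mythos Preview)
Your argument is correct, but it takes a different route from the paper. The paper dispatches Proposition~\ref{prop-injectivity} in two lines by reducing to Theorem~\ref{thm-injectivity}: given $\Phi(\sigma_1)=\Phi(\sigma_2)$, pick any function field $K$ in $F$, set $U_{K'}=\Phi(\sigma_i)(U_K)$, and use the cartesian square established just before (in the proof that Theorem~\ref{thm-main-theorem} implies Theorem~\ref{thm-function-field-main-theorem}) to lift suitable Frobenius twists of $\sigma_1,\sigma_2$ into $\Isom(F'|K'|k',F|K|k)$, where Theorem~\ref{thm-injectivity} applies directly. Your proof instead unpacks and adapts the valuation-theoretic content of Theorem~\ref{thm-injectivity} \emph{in situ}: you show the centraliser element $\rho=\sigma_2\sigma_1^{-1}$ fixes every rank~$1$ valuation (via Lemma~\ref{lem-dec-groups-trivial intersection}), normalise by a Frobenius power using the Galois-type correspondence and Proposition~\ref{prop-purely-inseparable-equivalence}, pin down $\rho|_{k(t)}$ via its action on $\bP^1(k)$, and finish with Lemma~\ref{lem-dec-groups-trivial intersection} again. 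This is self-contained and avoids the cartesian-square bookkeeping, at the cost of redoing work already packaged in Theorem~\ref{thm-injectivity}. One small simplification: in your Step~4, once you know $\rho$ preserves every place of $k(t)|k$ you have $\rho(f)/f\in k^\times$ for all $f\in k(t)^\times$, so Lemma~\ref{lem-autom-id-citerion} gives $\rho|_{k(t)}=\id$ immediately, without decomposing $\Aut(k(t))$ as $\PGL(2,k)\rtimes\Aut(k)$.
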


\begin{proof}
	If $\Phi(\sigma_1) = \Phi(\sigma_2)$, choose an arbitrary function field $K|k$ in $F$ and find $K'$ such that $\Phi(\sigma_i)(U_K) = U_{K'}$ (Galois correspondence). Then in the diagram above, $\sigma_1$ and $\sigma_2$ come from upstairs where $\Phi$ is injective by Theorem~\ref{thm-injectivity}. 
\end{proof}

\section{Detecting the Rationality of Function Fields}
\label{sec-detect-rationality}

The aim of this section is to prove the following Proposition~\ref{prop-rational-group-autom-invariant} and to show that the characteristic $\Char(k)$ is encoded in $G_{F|k}$ (Proposition~\ref{prop-char}).

\begin{prop}
	\label{prop-rational-group-autom-invariant}
	Given $\lambda: G_{F|k} \isomto G_{F'|k'}$, the bijection of Proposition~\ref{prop-function-field-bijection} maps rational function fields to rational function fields. 
\end{prop}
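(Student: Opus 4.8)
The plan is to give a purely group-theoretic characterisation of the compact open subgroups $U_K \leq G_{F|k}$ for which $K$ is rational, i.e.\ $K = k(t)$ for some $t$; since $\lambda$ is an isomorphism of topological groups, any such characterisation is automatically preserved, and by Proposition~\ref{prop-function-field-bijection} the claim follows. The key invariant to exploit is the automorphism group: for $K = k(t)$ one has $\Aut(K|k) = \PGL(2,k)$ acting on $\bP^1_k$ via Möbius transformations, and by Lemma~\ref{lem-aut-Ki} together with Remark~\ref{rem-normaliser-quotient} the group $\Aut(K^i|k) \cong N_{G_{F|k}}(U_K)/U_K$ fits into an exact sequence $1 \to \PGL(2,k) \to \Aut(K^i|k) \to \bZ$. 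So the first step is: the quotient $N_{G_{F|k}}(U_K)/U_K$ is recoverable from $(G_{F|k},U_K)$, hence from the abstract group $G_{F|k}$ together with the conjugacy class of $U_K$.

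The heart of the argument is to distinguish $\PGL(2,k)$ (or its extension by $\bZ$) among the groups $\Aut(K^i|k)$ arising from arbitrary function fields $K|k$ in $F$. First I would reduce to identifying the "large" automorphism group: a function field $K|k$ is rational if and only if $\Aut(K|k)$ is infinite and, more precisely, contains a subgroup acting on the associated curve with a suitable transitivity/stabiliser structure. The cleanest route is via genus: $K$ is rational iff its complete nonsingular model has genus $0$ and a rational point (the latter automatic over algebraically closed $k$), and among curves of genus $0$ the rational ones are exactly those, so it suffices to detect $\mathrm{genus}=0$. Since we don't yet have the full reconstruction, I would instead characterise rationality directly by the size and structure of $\Aut$: for $K$ non-rational, $\Aut(K|k)$ is finite unless $K$ is the function field of an elliptic curve (where it is finite modulo translations, an extension of a finite group by $E(k)$) or, in characteristic $p$, certain special curves — but in all these cases $\Aut(K|k)^{\circ}$ (the subgroup acting with finite orbits, or the maximal divisible/connected-like part) is either trivial or abelian-by-finite, whereas $\PGL(2,k)$ is non-abelian, perfect (for $|k|>3$), and acts $3$-transitively. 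So the decisive group-theoretic feature is: $K$ is rational iff $N_{G_{F|k}}(U_K)/U_K$ admits a subgroup abstractly isomorphic to $\PGL(2,k)$ — and here is where Lemma~\ref{lem-stab-subgroups-in-pgl}, characterising point-stabilisers inside $\PGL(2,k)$, enters: it lets us recognise $\PGL(2,k)$ together with its action on $\bP^1$ purely group-theoretically, which both pins down $k$ (as the field reconstructed from the $\PGL(2)$-action, via the classical coordinatisation) and certifies rationality.

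I would organise the write-up as: (i) recover $\Aut(K^i|k)$, and via the $\bZ$-valued map of Lemma~\ref{lem-aut-Ki} recover $\Aut(K|k)$ as its kernel on the level of the abstract group (the map $\sigma \mapsto \log_p[\sigma K:K]_i$ being intrinsic once we know how $U_K$ sits among its perfect-equivalence translates, which Proposition~\ref{prop-purely-inseparable-equivalence} makes explicit as the tower $U_{K^{p^n}}$); (ii) formulate the group-theoretic property $P(U_K)$ = "$\Aut(K|k)$ contains a subgroup isomorphic to $\PGL(2,k')$ for the algebraically closed field $k'$ reconstructed internally, acting with stabilisers as in Lemma~\ref{lem-stab-subgroups-in-pgl}"; (iii) show $P(U_K)$ holds iff $K$ is rational, using on one side that $\Aut(k(t)|k) = \PGL(2,k)$ and on the other the classification of which function fields of genus $\geq 1$ (resp.\ genus $0$ but here genus $0$ forces rational over algebraically closed $k$) have an automorphism group containing $\PGL(2,k)$ — none do, by a Hurwitz-type / ramification count, since a faithful $\PGL(2,k)$-action would force the quotient map to $\bP^1$ to have too few branch points. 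The main obstacle is precisely step (iii) in characteristic $p$: the automorphism groups of curves can be pathologically large (e.g.\ the Hermitian curve has automorphism group $\mathrm{PGU}(3,q)$, and there are curves with $\PGL(2,p^f)$-actions), so I must rule out an \emph{embedded abstract copy of the full $\PGL(2,k)$ over the uncountable (or at least infinite) algebraically closed field $k$} acting on a non-rational curve — this is where finiteness of the branch locus versus the infinitude of $k$, together with the three-point-stabiliser description from Lemma~\ref{lem-stab-subgroups-in-pgl}, does the work, but the bookkeeping (especially separating wild from tame ramification) is the delicate part.
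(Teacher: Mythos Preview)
Your high-level strategy is right---recover $\Aut(K^i|k)$ as $N_{G_{F|k}}(U_K)/U_K$ and distinguish rational $K$ by a group-theoretic property of this quotient---but the execution has two genuine gaps and one misattribution.

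First, the criterion ``contains a subgroup isomorphic to $\PGL(2,k')$ for the field $k'$ reconstructed internally'' is circular: at this point in the argument the base field has not been reconstructed (in the paper, $\Char(k)$ is determined in Proposition~\ref{prop-char} only \emph{after} rationality has been detected, by exploiting an isomorphism $\Aut(\bP^1_k) \cong \Aut(\bP^1_{k'})$ that the rationality result produces). You propose to coordinatise $k$ from the $\PGL(2)$-action, but that presupposes you have already located a copy of $\PGL(2,k)$, which is precisely what you are testing for. Second, your step (i), extracting $\Aut(K|k)$ as the kernel of the map $\Aut(K^i|k) \to \bZ$ of Lemma~\ref{lem-aut-Ki}, is not shown to be intrinsic: you have not argued that this particular homomorphism to $\bZ$ is singled out by the abstract group structure. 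Third, Lemma~\ref{lem-stab-subgroups-in-pgl} characterises point-stabilisers \emph{inside} a group already known to be $\Aut(\bP^1)$; it is used in Section~\ref{sec-dec-subgroups} to pin down decomposition groups, not here to recognise $\PGL(2,k)$ among candidate automorphism groups.

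The paper sidesteps all of this with a single invariant that mentions neither $k$ nor $\Aut(K|k)$ explicitly: for a prime $\ell \neq \Char(k),\Char(k')$, the function field $K$ is rational iff the subgroup of $\Aut(K^i|k)$ generated by the infinitely $\ell$-divisible elements is not virtually abelian (Lemma~\ref{lem-detect-rational-group}). Infinite $\ell$-divisibility automatically forces such elements into the kernel $\Aut(K|k)$ of the map to $\bZ$, so no separate identification of that kernel is needed; and ``not virtually abelian'' cleanly separates $\PGL(2,k)$ (genus $0$) from $E(k) \rtimes \Aut(E)$ (genus $1$) and from finite groups (genus $\geq 2$). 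Note in particular that $\Aut(C)$ is finite for every curve of genus $\geq 2$ even in positive characteristic, so your concern about Hermitian curves and embedded copies of $\PGL(2,q)$ is moot: those groups are finite, whereas the criterion tests for an \emph{infinite} non-virtually-abelian configuration.
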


Note that the rationality of a function field depends only on its perfect equivalence class, for the function fields perfectly equivalent to $k(x)$ are given by $k(x)^{p^n} = k(x^{p^n})$ for $n \in \bZ$, where $\Char(k) = p > 0$. Our proof of Proposition~\ref{prop-rational-group-autom-invariant} is an adaption of \cite{rovinsky}, Lemma~3.1\,(1) that takes into account the possibility of positive characteristic.

\begin{defi}
	Let $G$ be a group and $n \in \bN$. An element $x \in G$ is called \defstyle{$n$-divisible} if $y^n = x$ for some $y \in G$. It is called \defstyle{infinitely $n$-divisible} if there exists a sequence $(x_1,x_2,\ldots)$ with $x_1 = x$ and $x_i = x_{i+1}^n$ for all $i \in \bN$. We say the group $G$ is \defstyle{$n$-divisible} if every element is so.
\end{defi}

Recall that a group is \defstyle{virtually abelian} if it contains an abelian subgroup of finite index.

\begin{lem}
	\label{lem-pgl-divisible}
	Let $k$ be an algebraically closed field. Then $\Aut(\bP_k^1)$ is not virtually abelian, it is $\ell$-divisible for every prime number $\ell \neq \Char(k)$ but not $p$-divisible if $\Char(k) = p > 0$.
\end{lem}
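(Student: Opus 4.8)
The plan is to work throughout with the identification $\Aut(\bP^1_k) = \PGL(2,k)$ and to treat the three assertions in turn, the Jordan normal form over the algebraically closed field $k$ being the main tool.

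\emph{Not virtually abelian.} I would use that for an infinite (in particular algebraically closed) field $k$ the normal subgroup $\PSL(2,k) \trianglelefteq \PGL(2,k)$ is simple and non-abelian. Since any subgroup of a virtually abelian group is again virtually abelian, it is enough to observe that an infinite simple non-abelian group $S$ is not virtually abelian: if it contained an abelian subgroup of finite index, its normal core would be a normal abelian subgroup of finite index, hence trivial by simplicity (as $S$ is non-abelian), forcing $S$ to be finite. Alternatively, one can argue directly from the sharply $3$-transitive action of $\PGL(2,k)$ on the infinite set $\bP^1(k)$: a normal subgroup of a $2$-transitive group acts trivially or transitively, an abelian transitive group is regular, and in the regular case the point stabiliser (here the infinite affine group $k \rtimes k^\times$) would have finite index, which is absurd.

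\emph{$\ell$-divisibility for $\ell \neq \Char(k)$.} I would prove the stronger assertion that $\GL(2,k)$ is $\ell$-divisible and then descend to the quotient $\PGL(2,k)$, since a surjective homomorphism carries $\ell$-th powers onto $\ell$-th powers. Given $A \in \GL(2,k)$, bring it to Jordan form. If $A$ is diagonalisable, take $\ell$-th roots of its nonzero eigenvalues to get a diagonal $\ell$-th root. If $A = \lambda I + N$ is a single Jordan block with $N^2 = 0$ and $\lambda \neq 0$, seek a root $\alpha I + \gamma N$ with $\alpha^\ell = \lambda$; since $(\alpha I + \gamma N)^\ell = \alpha^\ell I + \ell\alpha^{\ell-1}\gamma N$, one needs $\gamma = (\ell\alpha^{\ell-1})^{-1}$, and it is precisely here that the hypothesis $\ell \neq \Char(k)$, i.e.\ invertibility of $\ell$ in $k$, enters.

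\emph{Not $p$-divisible when $\Char(k) = p > 0$.} I would exhibit the class $u \in \PGL(2,k)$ of $\left(\begin{smallmatrix}1 & 1\\0 & 1\end{smallmatrix}\right)$ as an element without a $p$-th root. The point is that in characteristic $p$ every $p$-th power in $\GL(2,k)$ is semisimple: for a single Jordan block $(\beta I + N)^p = \beta^p I + p\beta^{p-1}N = \beta^p I$ because $N^2 = 0$ and $p = 0$ in $k$, while the diagonalisable case is immediate. If $[B]^p = u$ in $\PGL(2,k)$, then $B^p = c\left(\begin{smallmatrix}1 & 1\\0 & 1\end{smallmatrix}\right)$ for some $c \in k^\times$, and the right-hand side has minimal polynomial $(X-c)^2$, hence is not semisimple — contradicting the semisimplicity of $B^p$. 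I expect the only genuinely non-routine point to be the first one; there the mathematics is standard, and the only real choice is whether to cite simplicity of $\PSL(2,k)$ or to spell out the argument via doubly transitive permutation groups.
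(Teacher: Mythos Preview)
Your proof is correct. For $\ell$-divisibility you and the paper do exactly the same thing (lift to $\GL(2,k)$ and use Jordan form), and for the failure of $p$-divisibility your semisimplicity argument is essentially a linear-algebra rephrasing of the paper's fixed-point argument: the paper observes that a $p$-th root $\psi$ of $z\mapsto z+1$ could only fix $\infty$, hence must be a translation, and translations satisfy $\psi^p=\id$ in characteristic $p$ --- which is exactly your computation $(\beta I+N)^p=\beta^p I$ read as a M\"obius transformation.

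The one genuine difference is in ``not virtually abelian.'' The paper gives a two-line elementary argument: any finite-index subgroup $H\leq\PGL(2,k)$ meets the infinite group of translations and the infinite group of homotheties nontrivially, and a nontrivial translation never commutes with a nontrivial homothety. Your route via simplicity of $\PSL(2,k)$ (or the permutation-group argument) is certainly valid, but it invokes a substantially heavier fact to prove something that can be checked by hand. The paper's argument is worth knowing precisely because it avoids any external input.
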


\begin{proof}
	Suppose $H \leq \Aut(\bP_k^1)$ has finite index. Then $H$ contains a nontrivial translation $\phi(z) = z+b$, $b \neq 0$, and a nontrivial homothety $\psi(z) = az$, $a \neq 1$. They do not commute since
	\begin{align*}
		\psi \circ \phi(z) &= az + ab,\\
		\phi \circ \psi(z) &= az + b,
	\end{align*}
	but $ab \neq b$. It is enough to show the $\ell$-divisibility of $\GL(2,k)$ as this property descends to the quotient $\Aut(\bP_k^1) = \PGL(2,k)$. Given $A \in \GL(2,k)$, we may assume it is in Jordan normal form
	$$A = \left(\begin{matrix}\lambda & 0 \\ 0 & \mu \end{matrix}\right) \quad \text{or} \quad A = \left(\begin{matrix}\lambda & 1 \\ 0 & \lambda \end{matrix}\right).$$
	Then a matrix $B \in \GL(2,k)$ with $B^\ell = A$ is given by
	\[B = \left(\begin{matrix}\lambda^{1/\ell} & 0 \\ 0 & \mu^{1/\ell} \end{matrix}\right) \quad \text{or} \quad B = \lambda^{1/\ell}\left(\begin{matrix}1 & \lambda^{-1}/\ell \\ 0 & 1 \end{matrix}\right), \]
	respectively. Suppose $\Char(k) = p > 0$ and assume for contradiction that there exists $\psi \in \Aut(\bP_k^1)$ such that $\psi^p(z) = z + 1$. If $P \in \bP_k^1$ is a fixed point of $\psi$, it is also a fixed point of $\psi^p$, hence $P = \infty$. As a Möbius transformation with $\infty$ as its only fixed point, $\psi$ is a translation, $\psi(z) = z + a$ for some $a \in k$. But then $\psi^p(z) = z + p a = z \neq z+1$, contradiction!
\end{proof}

\begin{lem}
	\label{lem-detect-rational-group}
	Let $K|k$ be a 1-dimensional function field with perfect closure $K^i$ and let $\ell \neq \Char(k)$ be a prime number. Then $K$ is rational if and only if the subgroup of $\Aut(K^i|k)$ generated by the infinitely $\ell$-divisible elements is not virtually abelian.
\end{lem}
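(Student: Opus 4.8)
The plan is to characterise rationality of $K$ through the group $\Aut(K^i|k)$, using the exact sequence $1 \to \Aut(K|k) \to \Aut(K^i|k) \to \bZ$ from Lemma~\ref{lem-aut-Ki} to pass between $\Aut(K^i|k)$ and the more familiar $\Aut(K|k)$. If $K$ is rational, say $K = k(x)$, then $\Aut(K|k) = \PGL(2,k)$ and, since the quotient $\Aut(K^i|k)/\Aut(K|k)$ is abelian (a subgroup of $\bZ$), the subgroup generated by the infinitely $\ell$-divisible elements of $\Aut(K^i|k)$ actually lies in $\Aut(K|k)$ up to finite index — more precisely I would argue that any infinitely $\ell$-divisible element maps to an infinitely $\ell$-divisible, hence trivial, element of $\bZ$, so all such elements already lie in $\Aut(K|k) = \PGL(2,k)$. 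By Lemma~\ref{lem-pgl-divisible}, $\PGL(2,k)$ is $\ell$-divisible (one can iterate the construction there to get \emph{infinite} $\ell$-divisibility, since $\ell$-th roots can be chosen compatibly), so the subgroup generated by infinitely $\ell$-divisible elements is all of $\PGL(2,k)$, which is not virtually abelian, again by Lemma~\ref{lem-pgl-divisible}.

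For the converse I would show that if $K$ is \emph{not} rational, then the subgroup of $\Aut(K^i|k)$ generated by the infinitely $\ell$-divisible elements \emph{is} virtually abelian — in fact I expect it to be trivial, or at least finite. The key input is that the automorphism group $\Aut(K|k)$ of a non-rational one-dimensional function field is finite when the genus is $\geq 2$ (Hurwitz-type bounds, valid in all characteristics in the sense that $\Aut(K|k)$ is finite), and when the genus is $1$ it is virtually abelian (an extension of a finite group by the translation group of the elliptic curve, which is abelian and divisible but — crucially — does not contain elements that are infinitely $\ell$-divisible \emph{inside} $\Aut$, or if it does, the subgroup they generate is abelian). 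So in either case the subgroup of $\Aut(K|k)$ generated by infinitely $\ell$-divisible elements is virtually abelian. Then I would lift this through the exact sequence: an infinitely $\ell$-divisible element of $\Aut(K^i|k)$ maps to an infinitely $\ell$-divisible element of $\bZ$, which must be $0$, so it lies in $\Aut(K|k)$; hence the subgroup generated by infinitely $\ell$-divisible elements of $\Aut(K^i|k)$ coincides with the corresponding subgroup of $\Aut(K|k)$, which is virtually abelian.

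The main obstacle I anticipate is the genus-$1$ case of the converse: one must rule out that the elliptic curve's translations (or a combination of translations with the finite automorphism part) generate a non-virtually-abelian subgroup via infinite $\ell$-divisibility. The translation group $E(k)$ is abelian and $\ell$-divisible, so on its own it generates an abelian subgroup; the concern is whether mixing in the finite part $\Aut(E,O)$ can produce extra infinitely $\ell$-divisible elements. Here I would use that $\Aut(E,O)$ is finite of order dividing $24$, so any infinitely $\ell$-divisible element of $\Aut(E|k) = E(k) \rtimes \Aut(E,O)$ projects to an infinitely $\ell$-divisible, hence trivial (for $\ell$ not dividing the order, or by a short divisibility argument in general), element of $\Aut(E,O)$, and therefore already lies in the abelian translation subgroup $E(k)$. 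Thus the subgroup generated by these elements is contained in $E(k)$, hence abelian. The genus-$\geq 2$ case is immediate from finiteness of $\Aut(K|k)$. One should also double-check the characteristic-$p$ subtleties (wild ramification allows $\Aut(K|k)$ to be larger than the classical Hurwitz bound, but it is still finite for genus $\geq 2$, which is all that is needed).
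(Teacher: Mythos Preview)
Your overall strategy matches the paper's: reduce to $\Aut(K|k)\cong\Aut(C)$ via the exact sequence of Lemma~\ref{lem-aut-Ki} (infinitely $\ell$-divisible elements of $\bZ$ are trivial), then split by genus. The rational case and the genus $\geq 2$ case are handled correctly and in the same way as the paper.

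The genus-$1$ argument, however, contains a real error. You assert that an infinitely $\ell$-divisible element of the finite group $\Aut(E,O)$ must be trivial, ``for $\ell$ not dividing the order, or by a short divisibility argument in general''. This is backwards: if $\ell$ is coprime to $|\Aut(E,O)|$ (the generic situation, e.g.\ $|\Aut(E,O)|=2$ and $\ell$ odd), then $x\mapsto x^\ell$ is a bijection on $\Aut(E,O)$, so \emph{every} element of $\Aut(E,O)$ is infinitely $\ell$-divisible. Hence the subgroup $H$ generated by infinitely $\ell$-divisible elements of $\Aut(C)=E(k)\rtimes\Aut(E,O)$ need not be contained in $E(k)$; for instance $(0,\sigma)$ with $\sigma\neq 1$ is infinitely $\ell$-divisible via the sequence $(0,\sigma^{\ell^{-n}})$. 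So your conclusion ``$H\subseteq E(k)$, hence $H$ abelian'' fails.

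The repair is immediate and is precisely what the paper does: you do not need $H\subseteq E(k)$, only that $H$ is virtually abelian. Since $E(k)$ is $\ell$-divisible one has $E(k)\subseteq H\subseteq\Aut(C)$, and $[\Aut(C):E(k)]=|\Aut(E,O)|<\infty$, so $E(k)$ is an abelian subgroup of finite index in $H$.
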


\begin{proof}
	By Lemma~\ref{lem-aut-Ki}, the infinitely $\ell$-divisible elements of $\Aut(K^i|k)$ are in fact infinitely $\ell$-divisible elements in the subgroup $\Aut(K|k)$. Moreover, if $C$ is a complete nonsingular model of $K|k$, we have isomorphisms 
	$$\Aut(K|k) \cong \Aut(C)^\op \cong \Aut(C).$$
	Thus, we define $H$ as the subgroup of $\Aut(C)$ generated by the infinitely $\ell$-divisible elements and show that $C$ is rational if and only if $H$ is not virtually abelian.
	
	If $C \cong \bP^1$ is rational, we have $H = \Aut(C) \cong \Aut(\bP_k^1)$ and this is not virtually abelian by Lemma~\ref{lem-pgl-divisible}. 	If $C$ has genus 1, it is a principal homogeneous space under the elliptic curve $E = \Jac C$ and we have $\Aut(C) \cong E(k) \rtimes \Aut(E)$. The abelian subgroup $E(k)$ is divisible, hence contained in $H$, so that $H$ is virtually abelian, $\Aut(E)$ being finite. For $C$ of higher genus, $\Aut(C)$ is finite and $H$ is virtually abelian via the trivial subgroup.
\end{proof}

\begin{proof}[Proof of Proposition~\ref{prop-rational-group-autom-invariant}]
	Let $K|k$ and $K'|k'$ be function fields with $\lambda^{-1}(U_{K'}) = U_K$. Then $\lambda$ induces an isomorphism 
	$$N_{G_{F|k}}(U_K)/U_K \isomto N_{G_{F'|k'}}(U_{K'})/U_{K'},$$
	hence $\Aut(K^i|k) \cong \Aut(K'^i|k')$. Now use Lemma~\ref{lem-detect-rational-group} with $\ell$ a prime number $\neq \Char(k), \Char(k')$ to test the rationality of $K$ (resp.~$K')$ by means of these automorphism groups.
\end{proof}

\begin{prop}
	\label{prop-char}
	For $F|k$ and $F'|k'$ as in Theorem~\ref{thm-main-theorem}, if $G_{F|k} \cong G_{F'|k'}$, then $\Char(k) = \Char(k')$.
\end{prop}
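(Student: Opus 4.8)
The plan is to recover, from the abstract topological group $G_{F|k}$ (together with the datum of being of the form treated in Theorem~\ref{thm-main-theorem}), the group $\PGL(2,k)$ for $k$ algebraically closed, and then to read off $\Char(k)$ from its divisibility behaviour via Lemma~\ref{lem-pgl-divisible}.

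First I would set up the reduction. Pick $x \in F$ transcendental over $k$ and put $K := k(x)$, a rational function field in $F$; by Theorem~\ref{thm-galois} the subgroup $U_K \leq G_{F|k}$ is compact open. Given an isomorphism $\lambda\colon G_{F|k} \isomto G_{F'|k'}$, the image $\lambda(U_K)$ is again compact open, hence equals $U_{K'}$ for a function field $K'|k'$ in $F'$ (one-dimensional by Proposition~\ref{prop-function-field-bijection}), unique up to perfect equivalence; by Proposition~\ref{prop-rational-group-autom-invariant} this $K'$ is rational. Since $\lambda$ carries $N_{G_{F|k}}(U_K)$ onto $N_{G_{F'|k'}}(U_{K'})$ and $U_K$ onto $U_{K'}$, it descends, by Remark~\ref{rem-normaliser-quotient}, to an isomorphism of topological groups $\Aut(K^i|k) \isomto \Aut(K'^i|k')$.

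Next I would recover $\Aut(K|k)$ inside $\Aut(K^i|k)$ group-theoretically, namely as the commutator subgroup. By Lemma~\ref{lem-aut-Ki} the quotient $\Aut(K^i|k)/\Aut(K|k)$ embeds into $\bZ$, so $[\Aut(K^i|k),\Aut(K^i|k)] \subseteq \Aut(K|k)$; and since $K$ is rational, $\Aut(K|k) \cong \PGL(2,k)$, which is perfect (a standard fact, e.g.\ via simplicity of $\PSL(2,k) = \PGL(2,k)$ as $k$ is infinite), so it equals its own commutator subgroup and is thus contained in that of $\Aut(K^i|k)$. The same applies to $K'$ (rational), so the commutator subgroups of $\Aut(K^i|k)$ and $\Aut(K'^i|k')$ are $\PGL(2,k)$ and $\PGL(2,k')$; being isomorphic, $\PGL(2,k) \cong \PGL(2,k')$. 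Finally, Lemma~\ref{lem-pgl-divisible} says that for $\kappa$ algebraically closed the set of primes $\ell$ for which $\PGL(2,\kappa)$ fails to be $\ell$-divisible is empty if $\Char(\kappa)=0$ and equals $\{\Char(\kappa)\}$ if $\Char(\kappa)>0$; as $\ell$-divisibility is an isomorphism invariant, this set is the same for $\PGL(2,k)$ and $\PGL(2,k')$, whence $\Char(k)=\Char(k')$.

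The one point that needs care — and the main obstacle — is that one may not test divisibility on $\Aut(K^i|k)$ directly: in positive characteristic this group surjects onto $\bZ$ (by $\sigma \mapsto \log_p [\sigma(K):K]_i$), and $\bZ$ is not $\ell$-divisible for any $\ell$, so the divisibility of $\Aut(K^i|k)$ itself would not detect the characteristic. Passing to the commutator subgroup is precisely what kills this $\bZ$ and isolates $\PGL(2,k)$, whose $\ell$-divisibility, by Lemma~\ref{lem-pgl-divisible}, does pin down $\Char(k)$. Beyond this, the argument is routine given Proposition~\ref{prop-rational-group-autom-invariant} and Remark~\ref{rem-normaliser-quotient}.
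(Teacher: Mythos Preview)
Your argument is correct. The overall architecture matches the paper's proof: pick a rational $K$, transport it via $\lambda$ to a rational $K'$ (Proposition~\ref{prop-rational-group-autom-invariant}), pass to the normaliser quotient to get $\Aut(K^i|k)\cong\Aut(K'^i|k')$, then isolate $\PGL(2,k)$ inside $\Aut(K^i|k)$ group-theoretically and read off the characteristic via Lemma~\ref{lem-pgl-divisible}.

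The genuine difference lies in how $\PGL(2,k)$ is cut out of $\Aut(K^i|k)$. The paper reuses the machinery of Lemma~\ref{lem-detect-rational-group}: for a prime $\ell$ distinct from both $\Char(k)$ and $\Char(k')$, the subgroup of $\Aut(K^i|k)$ generated by the infinitely $\ell$-divisible elements is exactly $\Aut(K|k)\cong\PGL(2,k)$, and this subgroup is visibly preserved by any group isomorphism. You instead take the commutator subgroup, using that $\Aut(K^i|k)/\Aut(K|k)\hookrightarrow\bZ$ is abelian (Lemma~\ref{lem-aut-Ki}) and that $\PGL(2,k)=\PSL(2,k)$ is perfect over an algebraically closed field. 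Your route has the minor advantage of not needing to select an auxiliary prime $\ell$ avoiding two a priori unknown characteristics, at the cost of invoking perfectness of $\PGL(2,k)$, which is standard but external to the paper. The paper's route has the virtue of recycling exactly the gadget already built for the rationality criterion. Both are short and clean; your observation that one cannot test $\ell$-divisibility on $\Aut(K^i|k)$ itself (because of the $\bZ$-quotient in positive characteristic) is spot on and is precisely why both proofs must first pass to a characteristic subgroup.
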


\begin{proof}
	Choose an arbitrary rational function field $K|k$ in $F$ and $K'$ with $\lambda^{-1}(U_{K'}) = U_K$. Then $\Aut(K^i|k) \cong \Aut(K'^i|k')$ as above and passing to the subgroups generated by the infinitely $\ell$-divisible elements, $\Aut(\bP_{k}^1) \cong \Aut(\bP_{k'}^1).$
	By Lemma~\ref{lem-pgl-divisible}, $\Char(k)$ is the unique prime $p$ for which $\Aut(\bP_k^1)$ is not $p$-divisible, or zero if no such prime exists, so it is the same for $k$ and $k'$.
\end{proof}

\section{Detecting Decomposition Groups}
\label{sec-dec-subgroups}
\label{sec-detect-dec-subgroups}

Our next aim is to give a group-theoretic characterisation in terms of $(G_{F|k}, U_K)$ of the decomposition groups in the pro-$\ell$ abelian Galois group $U_K^{\ab,\ell}$ (Proposition~\ref{prop-autom-preserve-dec-subgroups}). We start by recollecting some generalities. Let $k$ be an algebraically closed field, $\ell \neq \Char(k)$ a fixed prime number and $K|k$ the function field of a complete nonsingular curve $C$ over $k$. The normalised discrete valuations on $K|k$ correspond bijectively to closed points of $C$ and we write $\ord_P$ for the valuation associated with $P \in C(k)$. Denote by $K^{\ab,\ell}$ the maximal pro-$\ell$ abelian extension of $K$, obtained by adjoining the $\ell^n$th roots of all elements of $K$ for all $n \in \bN$. The Galois group $\Gal(K^{\ab,\ell}|K)$ acts transitively on the set of valuations of $K^{\ab,\ell}$ extending $\ord_P$ and their common stabiliser is the (pro-$\ell$) \defstyle{decomposition group} of $P$, which we denote by $Z_P$. Let $Z_{\tot}(C) \subseteq \Gal(K^{\ab,\ell}|K)$ be the closed subgroup topologically generated by all $Z_P$, called the (pro-$\ell$) \defstyle{total decomposition group} of $K|k$. Its fixed field is the maximal completely split pro-$\ell$ abelian extension of $K$, or equivalently the maximal unramified pro-$\ell$ abelian extension since all residue field extensions are trivial. Thus, we have an exact sequence
\begin{equation}
	\label{eq-total-dec-sequence}
	1 \lto Z_{\tot}(C) \lto \Gal(K^{\ab,\ell}|K) \lto \pi_1^{\ab,\ell}(C) \lto 1
\end{equation}
where $\pi_1^{\ab,\ell}(C)$ is the pro-$\ell$ abelianisation of the algebraic fundamental group of $C$. 

Denote by $\mu_{\ell^n}(k)$ the group of $\ell^n$th roots of unity in $k$ and by $\bZ_\ell(1) = \varprojlim_n \mu_{\ell^n}(k)$ the $\ell$-adic Tate module of $k^\times$. By Kummer theory, there is a natural isomorphism of topological groups
$$\Gal(K^{\ab,\ell}|K) \cong \Hom(K^\times, \bZ_\ell(1)).$$

\begin{defi}
	Write $\Map(C,\bZ_\ell(1))/\bZ_\ell(1)$ for the group of set-theoretic functions $C(k) \to \bZ_\ell(1)$ modulo the subgroup of constant functions. A function $f: C(k) \to \bZ_\ell(1)$ is a \defstyle{1-point function} at $P \in C(k)$ if it is constant on $C(k)\setminus\{P\}$. 
\end{defi}

Our 1-point functions are called "$\delta$-functions" in \cite{rovinsky}. Note that the 1-point functions at $P$ are closed under addition of constants, so the notion makes sense even for elements of $\Map(C, \bZ_{\ell}(1))/\bZ_{\ell}(1)$. A typical 1-point function at $P$ has the form $\delta_P\, \omega$ with $\omega \in \bZ_\ell(1)$, where $\delta_P: C(k) \to \{0,1\}$ is the Kronecker delta function. We recall the following well-known description of decomposition groups.

\begin{prop}
	\label{prop-dec-subgroups}
	There are canonical isomorphisms
	\begin{enumerate}[label=(\alph*)]
		\item{$Z_P \cong \bZ_\ell(1)$, for all closed points $P \in C$,}
		\item{$Z_{\tot}(C) \cong \Map(C,\bZ_\ell(1))/\bZ_\ell(1) \cong \Hom(\Div^0(C), \bZ_\ell(1))$,}
	\end{enumerate}
	under which the inclusion $Z_P \subseteq Z_{\tot}(C)$ is isomorphic to $\bZ_\ell(1) \overset{\delta_P \cdot{\;\;}}{\lto} \Map(C,\bZ_\ell(1))/\bZ_\ell(1)$ with image the 1-point functions at $P$.
\end{prop}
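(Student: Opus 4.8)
The plan is to run everything through the Kummer identification $\Gal(K^{\ab,\ell}|K)=\Hom(K^\times,\bZ_\ell(1))$ recorded above, under which a homomorphism $\chi$ corresponds to the automorphism $\sigma_\chi$ determined by $\sigma_\chi(\sqrt[\ell^n]{a})=\zeta\cdot\sqrt[\ell^n]{a}$, where $\zeta\in\mu_{\ell^n}$ is the image of $\chi(a)$. The one input that is not purely formal is a local computation: for $a\in K^\times$ and $n\geq 1$, the (abelian) extension $K(\sqrt[\ell^n]{a})|K$ is completely split at a closed point $P$ if and only if $\ell^n\mid\ord_P(a)$. To see this I would pass to the completion $\widehat K_P\cong k((t))$; since $\ell\neq\Char k$ and $k=\overline k$, Hensel's lemma applied to $X^{\ell^n}-u$ (whose reduction modulo $t$ has the simple root $\sqrt[\ell^n]{u(P)}\in k^\times$) shows that every unit $u\in k[[t]]^\times$ is an $\ell^n$-th power, hence $\sqrt[\ell^n]{a}\in\widehat K_P$ iff $\sqrt[\ell^n]{t^{\ord_P(a)}}\in\widehat K_P$ iff $\ell^n\mid\ord_P(a)$; and, the extension being Galois and the residue field at $P$ being $k$, it is completely split at $P$ exactly when $\sqrt[\ell^n]{a}\in\widehat K_P$.

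\emph{Part (a).} Since $\Gal(K^{\ab,\ell}|K)$ is abelian, $Z_P$ is normal and its fixed field is the maximal subextension of $K^{\ab,\ell}|K$ that is completely split at $P$. By the local statement, $\sigma_\chi$ fixes this subextension iff $\sigma_\chi$ fixes $\sqrt[\ell^n]{a}$ whenever $\ell^n\mid\ord_P(a)$, i.e. iff $\chi(a)\in\ell^n\bZ_\ell(1)$ whenever $\ell^n\mid\ord_P(a)$. Applying this over $\cO_P^\times=\{a\in K^\times:\ord_P(a)=0\}=\bigcap_n\{a:\ell^n\mid\ord_P(a)\}$ and using $\bigcap_n\ell^n\bZ_\ell(1)=0$ forces $\chi|_{\cO_P^\times}=0$; conversely, if $\chi|_{\cO_P^\times}=0$ then $\chi$ factors through $\ord_P\colon K^\times\to\bZ$ and the divisibility condition holds automatically. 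Hence $Z_P=\{\chi:\chi|_{\cO_P^\times}=0\}=\Hom(K^\times/\cO_P^\times,\bZ_\ell(1))$, and since $\ord_P$ canonically identifies $K^\times/\cO_P^\times$ with $\bZ$ this gives the canonical isomorphism $Z_P\isomto\bZ_\ell(1)$, $\chi\mapsto\chi(\pi_P)$ (independent of the chosen uniformizer $\pi_P$); in particular the general element of $Z_P$ is $a\mapsto\omega\cdot\ord_P(a)$ with $\omega\in\bZ_\ell(1)$.

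\emph{Part (b).} Consider $\Theta\colon\Map(C,\bZ_\ell(1))\to\Hom(K^\times,\bZ_\ell(1))$, $f\mapsto\chi_f$ with $\chi_f(a)=\sum_P f(P)\,\ord_P(a)$; under $\Map(C,\bZ_\ell(1))=\Hom(\Div(C),\bZ_\ell(1))$ this is the map induced by $\divisor\colon K^\times\to\Div(C)$. It is a continuous homomorphism (each evaluation $f\mapsto\chi_f(a)$ being a finite combination of coordinate projections) whose source is compact, so its image is a closed subgroup. By part (a), $Z_P=\{(a\mapsto\omega\,\ord_P(a)):\omega\in\bZ_\ell(1)\}=\Theta(\bZ_\ell(1)\cdot\delta_P)$, so $\im\Theta$ is a closed subgroup containing every $Z_P$, whence $Z_{\tot}(C)\subseteq\im\Theta$. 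Conversely, replacing an arbitrary $f$ by its truncation to a finite set containing $\supp\divisor(a_1)\cup\dots\cup\supp\divisor(a_r)$ leaves $\chi_f(a_1),\dots,\chi_f(a_r)$ unchanged, so the finitely supported functions — whose $\Theta$-images lie in $\sum_P Z_P$ — are dense in $\im\Theta$, giving $\im\Theta\subseteq Z_{\tot}(C)$. Hence $Z_{\tot}(C)=\im\Theta$.

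It then remains to compute $\ker\Theta$: a function $f$, viewed as a homomorphism $\Div(C)\to\bZ_\ell(1)$, lies in $\ker\Theta$ iff it vanishes on $\PDiv(C)$, i.e. factors through $\Pic(C)$. As $\Pic^0(C)=\Jac(C)(k)$ is $\ell$-divisible, $\Hom(\Pic^0(C),\mu_{\ell^n})=0$ for all $n$, hence $\Hom(\Pic^0(C),\bZ_\ell(1))=0$, and the exact sequence $0\to\Pic^0(C)\to\Pic(C)\xrightarrow{\deg}\bZ\to 0$ yields $\Hom(\Pic(C),\bZ_\ell(1))\cong\Hom(\bZ,\bZ_\ell(1))=\bZ_\ell(1)$, which under $\Theta$ is precisely the subgroup of constant functions. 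Therefore $Z_{\tot}(C)=\im\Theta\cong\Map(C,\bZ_\ell(1))/\bZ_\ell(1)$, and the split sequence $0\to\Div^0(C)\to\Div(C)\xrightarrow{\deg}\bZ\to 0$ identifies this further with $\Hom(\Div^0(C),\bZ_\ell(1))$; under all of these isomorphisms the subgroup $Z_P=\Theta(\bZ_\ell(1)\delta_P)$ corresponds to the image of $\bZ_\ell(1)\xrightarrow{\,\cdot\,\delta_P\,}\Map(C,\bZ_\ell(1))/\bZ_\ell(1)$, i.e. to the $1$-point functions at $P$, as required. The only genuinely non-formal step is the local computation at the start — equivalently, the identification of the pro-$\ell$ decomposition group (which here equals the inertia group, the residue field being $k$) of $k((t))$ with $\bZ_\ell(1)$; this is where Hensel's lemma together with $\ell\neq\Char k$ and $k=\overline k$ enter, everything else being Kummer duality and the $\ell$-divisibility of the Jacobian.
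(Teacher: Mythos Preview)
Your proof is correct and follows essentially the same route as the paper: Kummer duality, the local input that units are $\ell^n$-th powers (via Hensel, with $\ell\neq\Char k$ and $k=\bar k$), and $\ell$-divisibility of $\Pic^0(C)$. The only cosmetic differences are that the paper packages part~(a) through the henselisation $K_P$ (invoking $Z_P\cong\Hom(K_P^\times,\bZ_\ell(1))$ and $\ell$-divisibility of $\cO_{C,P}^{h,\times}$) rather than your explicit splitting criterion, and for part~(b) works directly with $\Hom(\Div^0(C),\bZ_\ell(1))$ instead of computing the kernel of your map $\Theta$ on $\Hom(\Div(C),\bZ_\ell(1))$.
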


\begin{proof}
	Let $K_P$ be the henselisation of $K$ with respect to $\ord_P$, let $\OO_{C,P}^h$ be its valuation ring and $K_P^{\ab,\ell}$ its maximal pro-$\ell$ abelian extension, which is also the henselisation of $K^{\ab,\ell}$ with respect to a valuation extending $\ord_P$.
	The inclusion $Z_P \subseteq \Gal(K^{\ab,\ell}|K)$ is isomorphic to $\Hom(K_P^\times, \bZ_\ell(1))\hookrightarrow \Hom(K^\times, \bZ_\ell(1))$ by functoriality of Kummer theory. We have a split exact sequence
	\begin{diagram}
		1 \rar & {\OO_{C,P}^{h,\times}} \rar & K_P^\times \rar[swap]{\ord_P} & \bZ \rar \lar[bend right] & 1
	\end{diagram}
	and obtain another short exact sequence upon taking $\Hom(-,\bZ_\ell(1))$. The group $\OO_{C,P}^{h,\times}$ is $\ell$-divisible by Hensel's lemma, hence $\Hom(\OO_{C,P}^{h,\times}, \bZ_\ell(1)) = 0$ and we get the canonical isomorphism 
	$$\bZ_\ell(1) \cong \Hom(K_P^\times,\bZ_\ell(1)),$$
	given by $\omega \mapsto [x \mapsto \ord_P(x) \omega]$.
	Consider the exact sequence
	$$K^\times \overset{\divisor}{\lto} \Div^0(C) \lto \Pic^0(C) \lto 1.$$
	We have $\Hom(\Pic^0(C),\bZ_\ell(1)) = 0$ as $\Pic^0(C)$ is $\ell$-divisible, hence
	$$\divisor^*: \Hom(\Div^0(C), \bZ_\ell(1)) \longhookrightarrow \Hom(K^\times, \bZ_\ell(1)).$$
	is injective. The groups are endowed with the compact-open topology and $\divisor^*$ is a topological embedding as it is continuous and the groups are compact Hausdorff. We have an isomorphism 
	$$\Hom(\Div^0(C),\bZ_\ell(1)) \cong \Map(C,\bZ_\ell(1))/\bZ_\ell(1)$$
	induced by $\Hom(\Div(C),\bZ_\ell(1)) \cong \Map(C,\bZ_\ell(1))$. The maps 
	$$\bZ_\ell(1) \cong \Hom(K_P^\times, \bZ_\ell(1)) \hookrightarrow \Hom(K^\times, \bZ_\ell(1))$$
	all factor through $\bZ_\ell(1) \overset{\delta_P \cdot{\;\;}}{\lto} \Map(C,\bZ_\ell(1))/\bZ_\ell(1)$ and (b) follows since the images generate $\Map(C,\bZ_\ell(1))/\bZ_\ell(1)$ topologically.
\end{proof}

We are interested in the functorial behaviour of decomposition groups. Let $\phi: C_2 \to C_1$ be a dominant morphism between complete nonsingular curves over $k$ and $\phi^*: K_1 \hookrightarrow K_2$ the corresponding function field extension. Embed $K_1$ and $K_2$ in a common algebraic closure $F$ and let $U_{K_i} = \Aut(F|K_i)$. The inclusion $U_{K_2} \subseteq U_{K_1}$ induces two homomorphisms in opposite directions between the pro-$\ell$ abelianisations: 
\begin{itemize}
	\item{the \defstyle{corestriction} $\cores: U_{K_2}^{\ab,\ell} \to U_{K_1}^{\ab,\ell}$; and}
	\item{the \defstyle{restriction} $\res: U_{K_1}^{\ab,\ell} \to U_{K_2}^{\ab,\ell}$, also called the \defstyle{transfer} $\tr$.}
\end{itemize}
They come from profinite group homology via the isomorphisms $U_{K_i}^{\ab,\ell} \cong H_1(U_{K_i}, \bZ_\ell)$. The corestriction is also the natural map from the functoriality of $(-)^{\ab,\ell}$. Their effect on decomposition groups can be expressed via the pullback and pushward map of divisors.

\begin{prop}
	\label{prop-two-squares}
	The following squares commute:
	\begin{center}
		\begin{minipage}[t]{0.44\textwidth}
			\begin{tikzcd}
				\Hom(\Div^0(C_2),\bZ_\ell(1)) \rar \dar{- \circ \phi^*} & U_{K_2}^{\ab,\ell} \dar{\cores} \\
				\Hom(\Div^0(C_1),\bZ_\ell(1)) \rar & U_{K_1}^{\ab,\ell}
			\end{tikzcd}
		\end{minipage}
		\begin{minipage}[t]{0.44\textwidth}
			\begin{tikzcd}
				\Hom(\Div^0(C_2),\bZ_\ell(1)) \rar & U_{K_2}^{\ab,\ell} \\
				\Hom(\Div^0(C_1),\bZ_\ell(1)) \rar \uar{\deg_i(\phi)^{-1}\cdot(- \circ \phi_*)} \rar & U_{K_1}^{\ab,\ell} \uar{\tr}
			\end{tikzcd}
		\end{minipage}
	\end{center}
\end{prop}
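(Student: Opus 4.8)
The plan is to transport both squares to the Kummer-theoretic picture. Recall from the proof of Proposition~\ref{prop-dec-subgroups} that under the Kummer isomorphism $U_{K_i}^{\ab,\ell}\cong\Hom(K_i^\times,\bZ_\ell(1))$ the horizontal maps of the two squares are the embeddings $\divisor_i^*\colon\Hom(\Div^0(C_i),\bZ_\ell(1))\hookrightarrow\Hom(K_i^\times,\bZ_\ell(1))$, $\chi\mapsto\chi\circ\divisor_i$, i.e.\ precomposition with the principal-divisor map $\divisor_i\colon K_i^\times\to\Div^0(C_i)$. So it is enough to describe $\cores$ and $\tr$ as maps between the groups $\Hom(K_i^\times,\bZ_\ell(1))$ and then perform two diagram chases; the only geometric inputs will be the standard compatibilities of principal divisors with the pullback $\phi^*$ and the pushforward $\phi_*$ of divisors.

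The corestriction step is immediate. Since $K_1\subseteq K_2$ gives $U_{K_2}\subseteq U_{K_1}$ and $\cores$ is, by definition, the map induced on $(-)^{\ab,\ell}$ by this inclusion, functoriality (naturality) of Kummer theory identifies $\cores\colon\Hom(K_2^\times,\bZ_\ell(1))\to\Hom(K_1^\times,\bZ_\ell(1))$ with restriction of characters along $K_1^\times\hookrightarrow K_2^\times$. On the divisor side, for $a\in K_1^\times$ one has $\divisor_2(a)=\sum_P\ord_P(a)\sum_{\phi(Q)=P}e_Q\,Q=\phi^*(\divisor_1(a))$, because $\ord_Q$ restricts to $e_Q\cdot\ord_{\phi(Q)}$ on $K_1^\times$ and $\phi^*(P)=\sum_{\phi(Q)=P}e_Q\,Q$. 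Hence, for $\chi_2\in\Hom(\Div^0(C_2),\bZ_\ell(1))$, $\cores(\chi_2\circ\divisor_2)=(\chi_2\circ\divisor_2)|_{K_1^\times}=(\chi_2\circ\phi^*)\circ\divisor_1=\divisor_1^*(\chi_2\circ\phi^*)$, which is the commutativity of the left square.

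For the transfer, the description is the same up to the scalar $\deg_i(\phi)^{-1}$. Write $K_1\subseteq K_1'\subseteq K_2$ with $K_1'$ the separable closure of $K_1$ in $K_2$; as $K_2|K_1'$ is purely inseparable, automorphism groups are blind to it and $U_{K_2}=U_{K_1'}$. For the separable extension $K_1'|K_1$ the transfer $\Hom(K_1^\times,\bZ_\ell(1))\to\Hom((K_1')^\times,\bZ_\ell(1))$ is classically precomposition with the norm $N_{K_1'/K_1}$, while the canonical identification $U_{K_1'}^{\ab,\ell}=U_{K_2}^{\ab,\ell}$ is the inverse of restriction of characters along $(K_1')^\times\hookrightarrow K_2^\times$. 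Since $N_{K_2/K_1'}$ is the $q$-th power map on $K_2^\times$ with $q=[K_2:K_1']=\deg_i(\phi)$, the character $\chi\circ N_{K_1'/K_1}$ on $(K_1')^\times$ is the restriction of $\deg_i(\phi)^{-1}(\chi\circ N_{K_2/K_1})$ on $K_2^\times$; the inverse makes sense because $\deg_i(\phi)$ is a power of the prime $p\neq\ell$, hence acts invertibly on $\bZ_\ell(1)$. Thus $\tr$ becomes $\chi\mapsto\deg_i(\phi)^{-1}(\chi\circ N_{K_2/K_1})$. Combining this with the pushforward formula $\divisor_1(N_{K_2/K_1}(f))=\phi_*(\divisor_2(f))$ for $f\in K_2^\times$ --- which, all residue degrees being $1$, amounts to $\ord_P(N_{K_2/K_1}(f))=\sum_{\phi(Q)=P}\ord_Q(f)$ --- yields, for $\chi_1\in\Hom(\Div^0(C_1),\bZ_\ell(1))$, $\tr(\chi_1\circ\divisor_1)=\deg_i(\phi)^{-1}(\chi_1\circ\phi_*)\circ\divisor_2=\divisor_2^*(\deg_i(\phi)^{-1}(\chi_1\circ\phi_*))$, i.e.\ the commutativity of the right square.

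The one step requiring genuine care is the description of the transfer: its identification with the norm in Kummer theory is classical only in the separable case, and one has to pin down the precise power of $p$ by which the purely inseparable part rescales it --- this is exactly what produces the factor $\deg_i(\phi)^{-1}$ in the statement. Everything else rests on the routine functoriality of principal divisors under $\phi^*$ and $\phi_*$.
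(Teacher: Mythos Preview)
Your proof is correct and follows essentially the same approach as the paper: translate both squares via the Kummer isomorphism, identify $\cores$ with precomposition by the inclusion and $\tr$ with precomposition by the norm (adjusted by $\deg_i(\phi)^{-1}$), and conclude from the standard identities $\divisor_2\circ\phi^*=\phi^*\circ\divisor_1$ and $\divisor_1\circ\Nm_{K_2/K_1}=\phi_*\circ\divisor_2$. The only cosmetic difference is that the paper works uniformly with the perfect closures $(K_i^i)^\times$ and the group-theoretic norm $\prod_{\sigma\in U_{K_1}/U_{K_2}}\sigma(-)$, whereas you factor $K_1\subseteq K_1'\subseteq K_2$ through the separable closure and handle the purely inseparable step explicitly; both routes produce the same $\deg_i(\phi)^{-1}$ for the same reason.
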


Here $\deg_i(\phi) = [K_2 : K_1]_i$ denotes the inseparable degree, which is either $1$ or a power of the characteristic of $k$ so that multiplication by $\deg_i(\phi)^{-1}$ on $\bZ_\ell(1)$ is well-defined.

\begin{proof}
	From Kummer theory and the fact that corestriction is Pontryagin dual to restriction we have two commutative squares.
	\begin{center}
		\begin{minipage}[t]{0.40\textwidth}
			\begin{tikzcd}
				\Hom((K_2^i)^\times, \bZ_\ell(1)) \rar{\sim} \dar[swap]{- \circ \res} & U_{K_2}^{\ab,\ell} \dar[swap]{\cores} \\
				\Hom((K_1^i)^\times, \bZ_\ell(1)) \rar{\sim} & U_{K_1}^{\ab,\ell}
			\end{tikzcd}
		\end{minipage}
		\begin{minipage}[t]{0.48\textwidth}
			\begin{tikzcd}
				\Hom((K_2^i)^\times, \bZ_\ell(1)) \rar{\sim} & U_{K_2}^{\ab,\ell}  \\
				\Hom((K_1^i)^\times, \bZ_\ell(1)) \rar{\sim} \uar{- \circ \cores} & U_{K_1}^{\ab,\ell}. \uar{\res}
			\end{tikzcd}
		\end{minipage}
	\end{center}
	
	The restriction $\res: (K_1^i)^\times \to (K_2^i)^\times$ is the extension of the inclusion $\phi^*: K_1^\times \hookrightarrow K_2^\times$ and the corestriction $\cores: (K_2^i)^{\times} \to (K_1^i)^{\times}$ is the group-theoretic norm
	$$\cores(x_2) = \prod_{\sigma \in U_{K_1}/U_{K_2}} \sigma x_2,$$
	which on $K_2^\times$ is related to the field-theoretic norm $\Nm: K_2^\times \to K_1^\times$ by $\cores(x_2) = \Nm(x_2)^{\deg_i(\phi)}$. The claim follows now from the formulae for the divisor map
	\begin{align*}
		\divisor(\phi^*x_1) &= \phi^*(\divisor(x_1)),\\
		\divisor(\Nm \,x_2) &= \phi_*(\divisor(x_2)). \qedhere
	\end{align*}
\end{proof}

Given $\phi: C_2 \to C_1$ as above, we use the terms corestriction and transfer not just for the homomorphisms between the pro-$\ell$ abelianised absolute Galois groups but also for the corresponding maps between the groups $\Hom(\Div^0(C_i), \bZ_\ell(1))$. On 1-point functions and hence on decomposition groups they act as follows. Given $P \in C_2(k)$, $Q \in C_1(k)$ and $\omega \in \bZ_\ell(1)$, we have
\begin{align}
	\cores(\delta_P \,\omega) &= e_\phi(P)\, \delta_{\phi(P)} \, \omega, \label{eq-cor-delta}\\
	\tr(\delta_Q\, \omega) &= \deg_i(\phi)^{-1}\, \mathbbm{1}_{\phi^{-1}(Q)} \, \omega, \label{eq-tr-delta}
\end{align}
where $e_\phi(P)$ denotes the ramification index of $\phi$ at $P$ and $\mathbbm{1}_{\phi^{-1}(Q)}$ denotes the characteristic function of $\phi^{-1}(Q)$ on $C_2(k)$. 

With these generalities at hand, we turn to the proof of the following.

\begin{prop}
	\label{prop-autom-preserve-dec-subgroups}
	In the situation of Theorem~\ref{thm-main-theorem}, given $\lambda: G_{F|k} \cong G_{F'|k'}$, let $K|k$ and $K'|k'$ be corresponding function fields with complete nonsingular models $C,C'$. Then the isomorphism $\lambda^{\ab,\ell}: U_K^{\ab,\ell} \isomto U_{K'}^{\ab,\ell}$ maps decomposition groups to decomposition groups, inducing a bijection
	$$\lambda^*: C'(k') \isomto C(k).$$
\end{prop}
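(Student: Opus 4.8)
The plan is to give a purely group-theoretic characterisation of the decomposition groups $Z_P \subseteq U_K^{\ab,\ell}$ inside the pair $(G_{F|k}, U_K)$, so that the isomorphism $\lambda$, which matches $U_K$ with $U_{K'}$ and hence induces $\lambda^{\ab,\ell}$, automatically matches decomposition groups. The natural intrinsic object to exploit is the family of \emph{open} subgroups $U_L \subseteq U_K$ corresponding to finite extensions $L|K$ inside $F$, i.e.\ to finite dominant morphisms $\phi\colon C_L \to C$ of complete nonsingular models; the point is that $\lambda$ carries this whole poset of open subgroups (with its inclusions and indices) to the analogous poset for $K'$, and by Proposition~\ref{prop-two-squares} the transfer and corestriction maps $U_L^{\ab,\ell} \rightleftarrows U_K^{\ab,\ell}$ are likewise preserved, since they are defined group-theoretically from the inclusion $U_L \subseteq U_K$ via $H_1(-,\bZ_\ell)$.

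First I would describe, for a fixed point $P \in C(k)$, how $Z_P$ is seen through covers. Using \eqref{eq-cor-delta} and \eqref{eq-tr-delta}: if $\phi\colon C_L \to C$ is finite and $Q \in C_L(k)$ lies over $P$, then $\cores$ sends $Z_Q$ into $Z_P$ (scaled by $e_\phi(Q)$), and $\tr$ sends $Z_P$ into $\bigoplus_{Q \mapsto P} Z_Q$ (scaled by $\deg_i(\phi)^{-1}$). Since the $Z_P$ for $P \in C(k)$ topologically generate $Z_{\tot}(C)$, and the $Z_P$ are exactly the images of $\bZ_\ell(1) \xrightarrow{\delta_P\cdot} \Map(C,\bZ_\ell(1))/\bZ_\ell(1)$ by Proposition~\ref{prop-dec-subgroups}, the strategy is to isolate a single $Z_P$ as the subgroup of $Z_{\tot}(C)$ consisting of the 1-point functions at $P$, and to characterise "1-point function" combinatorially: $f$ is (equivalent to) a 1-point function iff for every finite cover $\phi$, the pullback $\tr(f) = \deg_i(\phi)^{-1}\,\mathbbm 1_{\phi^{-1}(\supp f)}\cdot\omega$ is again concentrated on a fibre; more robustly, a nonzero element of $Z_{\tot}(C)$ lies in some $Z_P$ iff it is \emph{not} expressible as a sum of two elements with "disjoint support," a condition that can be phrased via the cover-theoretic splitting behaviour. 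I would make this precise by using $\tr$ to pass to covers where the point $P$ splits completely or ramifies, reducing support-separation to a statement that survives under $\lambda$.

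The cleanest route, and the one I expect the paper to take, is: recover $Z_{\tot}(C)$ first — it is the kernel of $U_K^{\ab,\ell}\twoheadrightarrow \pi_1^{\ab,\ell}(C)$, and $\pi_1^{\ab,\ell}(C)$ is the maximal $\ell$-divisible quotient (or: the unramified quotient, detected as the inverse limit over open subgroups of a Pontryagin-dual divisibility condition), hence group-theoretic in $(G_{F|k},U_K)$ via the tower of $U_L$'s — and then, inside $Z_{\tot}(C) \cong \Map(C,\bZ_\ell(1))/\bZ_\ell(1)$, characterise the $Z_P$ as the \emph{minimal nonzero $\lambda$-equivariantly-stable direct summands}, or more concretely as the intersections $\bigcap_\phi \cores(Z_{\tot}(C_L)_{\text{functions supported over }P})$ where $\phi$ ranges over covers in which $P$ does not split. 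Once $\{Z_P\}_{P}$ is pinned down group-theoretically, $\lambda^{\ab,\ell}$ permutes them, and since $P\mapsto Z_P$ is a bijection $C(k) \to \{\text{decomposition groups}\}$ by Proposition~\ref{prop-dec-subgroups}(a), we get the desired bijection $\lambda^*\colon C'(k')\isomto C(k)$, compatible with $\lambda^{\ab,\ell}$ by construction.

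The main obstacle is the combinatorial characterisation of a single decomposition group among all subgroups of $Z_{\tot}(C)$: a priori $Z_{\tot}(C)\cong\Map(C,\bZ_\ell(1))/\bZ_\ell(1)$ has many "coordinate-subgroup-like" pieces, and one must rule out that $\lambda^{\ab,\ell}$ mixes a $Z_P$ with, say, a rank-one subgroup spanned by $\delta_P-\delta_{P'}$; this is where the functoriality under \emph{all} finite covers (Proposition~\ref{prop-two-squares}, formulas \eqref{eq-cor-delta}–\eqref{eq-tr-delta}) must be leveraged, using that only the genuine 1-point functions have the property that their image under every $\tr$ stays supported on a single fibre while their image under every $\cores$ stays a 1-point function. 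Handling the inseparable scaling factors $\deg_i(\phi)^{-1}$ and $e_\phi(P)$ uniformly across characteristics is a technical wrinkle but not a conceptual one, since these are invertible on $\bZ_\ell(1)$ for $\ell\neq\Char k$.
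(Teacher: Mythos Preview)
Your approach has a genuine structural gap. You propose to characterise the $Z_P$ via their behaviour under transfer and corestriction along finite \emph{covers} $C_L \to C$, i.e.\ open subgroups $U_L \subseteq U_K$. But this is circular: to say that $\tr(z)$ ``stays supported on a single fibre'' or that $\cores(z)$ ``stays a 1-point function'' presupposes knowing the decomposition groups on $C_L$, which is the same problem one level up. You flag this as ``the main obstacle'' but offer only heuristics about minimal summands, with no base case to anchor the recursion. (Incidentally, your proposed recovery of $Z_{\tot}(C)$ is also off: $\pi_1^{\ab,\ell}(C) \cong \bZ_\ell^{2g}$ is \emph{not} $\ell$-divisible, so it is not ``the maximal $\ell$-divisible quotient'' of $U_K^{\ab,\ell}$.)

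The paper's argument goes in the opposite direction and supplies exactly the missing base case. It first treats the \emph{rational} case $K = k(x)$ by exploiting the large group $\Aut(\bP^1) \cong \PGL(2,k)$, which acts on $U_K^{\ab,\ell}$ by conjugation through $N_{G_{F|k}}(U_K)/U_K$: Lemma~\ref{lem-dec-group-fixed-group} shows $Z_P$ is exactly the set of $\ad(\sigma)$-fixed elements for $\sigma \in \Stab(P)$, and Lemma~\ref{lem-stab-subgroups-in-pgl} characterises the point-stabilisers inside $\PGL(2,k)$ purely group-theoretically (normaliser of the centraliser of a uniquely $\ell$-divisible element). For general $K$, the paper then uses rational \emph{supergroups} $V \supseteq U_K$ --- equivalently, morphisms $\phi\colon C \to \bP^1$ --- rather than covers: Lemma~\ref{lem-detect-dec-groups} shows that $Z \subseteq U_K^{\ab,\ell}$ is a decomposition group iff (a) it is the $\tr$-image of a decomposition group for some rational $V \supseteq U_K$, and (b) its $\cores$-image lands in a decomposition group for every rational $V \supseteq U_K$. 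Condition~(a) alone only cuts out the functions supported on some fibre $S = \phi^{-1}(Q)$; condition~(b), applied with a $\phi$ that separates the points of $S$, forces $|S|=1$. The rationality of $V$ is detected beforehand via Proposition~\ref{prop-rational-group-autom-invariant}, so everything is group-theoretic in $(G_{F|k},U_K)$.
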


We first treat the case of rational function fields and then extend to the general case. Let $K|k$ be a rational function field in $F$ with complete nonsingular model $C$. Let $H \subseteq N_{G_{F|k}}(U_K)/U_K$ be the subgroup generated by the infinitely $\ell$-divisible elements. Recall from the proof of Lemma~\ref{lem-detect-rational-group} the isomorphism $H \cong \Aut(K|k) \cong \Aut(C)^{\op}$. For $\sigma \in H$, denote by $\sigma^*$ the corresponding automorphism of $C$ and by $\ad(\sigma)$ the automorphism of $U_K^{\ab,\ell}$ induced by conjugation. For $P \in C(k)$, denote by $\Stab_H(P)$ the stabiliser
$$\Stab_H(P) = \{ \sigma \in H : \sigma^*(P) = P \}.$$

\begin{lem}
	\label{lem-dec-group-fixed-group}
	An element of $U_K^{\ab,\ell}$ belongs to the decomposition subgroup $Z_P$ if and only if it is fixed by $\ad(\sigma)$ for all $\sigma \in \Stab_H(P)$. 
\end{lem}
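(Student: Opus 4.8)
The plan is to transport the assertion, through the dictionary of Proposition~\ref{prop-dec-subgroups}, into an elementary statement about $\bZ_\ell(1)$-valued functions on $\bP^1(k)$, and then to invoke the transitivity of point stabilisers in $\PGL(2,k)$. Since $C \cong \bP^1$ is simply connected, the exact sequence~\eqref{eq-total-dec-sequence} collapses to $Z_{\tot}(C) = \Gal(K^{\ab,\ell}|K) = U_K^{\ab,\ell}$, so Proposition~\ref{prop-dec-subgroups} furnishes an identification $U_K^{\ab,\ell} \cong \Map(\bP^1,\bZ_\ell(1))/\bZ_\ell(1)$ under which $Z_P$ is precisely the subgroup of classes of $1$-point functions at $P$, i.e.\ of functions that are constant on $\bP^1(k)\setminus\{P\}$.

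The next step is to identify the conjugation action under this dictionary: for $\sigma \in H$, with $\sigma^*$ the associated automorphism of $C$, the automorphism $\ad(\sigma)$ of $U_K^{\ab,\ell}$ corresponds to the natural action of $\sigma^* \in \Aut(\bP^1)$ on $\Map(\bP^1,\bZ_\ell(1))/\bZ_\ell(1)$ by (pre)composition. This is functoriality of Kummer theory: writing $U_K^{\ab,\ell} \cong \Hom(K^\times,\bZ_\ell(1))$, conjugation by $\sigma$ becomes precomposition with the group automorphism $\sigma^{-1}$ of $K^\times$ — here one uses that $\sigma$ fixes $k$ and hence $\bZ_\ell(1) \subseteq k^\times$ pointwise — and the divisor map $K^\times \to \Div^0(C)$ intertwines this with the induced action on $\Div^0(C)$, hence on functions. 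Since $\sigma \mapsto \sigma^*$ is a bijection of $H$ onto $\Aut(C) = \PGL(2,k)$ carrying $\Stab_H(P)$ onto the stabiliser of $P$ in $\PGL(2,k)$, the lemma reduces to the following elementary claim: \emph{a function $f\colon \bP^1(k)\to\bZ_\ell(1)$ is constant on $\bP^1(k)\setminus\{P\}$ if and only if $f\circ\gamma-f$ is constant for every $\gamma\in\PGL(2,k)$ fixing $P$.} For the direction $(\Rightarrow)$, such a $\gamma$ permutes $\bP^1(k)\setminus\{P\}$, so $f\circ\gamma = f$ identically. For $(\Leftarrow)$, normalise $P=\infty$; the translations $z\mapsto z+b$ fix $\infty$ and already act transitively on $\bA^1(k)$, so given $x,y\in k$ we may pick such a $\gamma$ with $\gamma(x)=y$; then the constant value of $f\circ\gamma-f$ equals $f(\gamma(\infty))-f(\infty)=0$, whence $f(y)=f(x)$, and $f$ is constant on $\bP^1(k)\setminus\{\infty\}$.

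The only step requiring genuine care is the identification of $\ad(\sigma)$ with the geometric (pre)composition action: one must keep straight the contravariance between the Galois-theoretic side and the geometry of $C$ — in particular whether $\ad(\sigma)$ is composition with $\sigma^*$ or with $(\sigma^*)^{-1}$, which is harmless here since $\Stab_H(P)$ is a group — and check that $\sigma$ acts trivially on the coefficient module $\bZ_\ell(1)$. The remaining ingredients, namely Proposition~\ref{prop-dec-subgroups} and the transitivity of the stabiliser of a point of $\bP^1$ under $\PGL(2,k)$, are then immediate.
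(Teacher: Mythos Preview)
Your proof is correct and follows essentially the same route as the paper: both identify $U_K^{\ab,\ell}$ with $\Map(\bP^1,\bZ_\ell(1))/\bZ_\ell(1)$ via Proposition~\ref{prop-dec-subgroups} and the vanishing of $\pi_1^{\ab,\ell}(\bP^1)$, translate $\ad(\sigma)$ into precomposition by $\sigma^*$ via functoriality, and then reduce to the transitivity of $\Stab_H(P)$ on $\bP^1(k)\setminus\{P\}$. Your argument is in fact slightly more explicit than the paper's in justifying why the congruence modulo constants upgrades to equality (by evaluating at the fixed point $P$), which the paper dispatches in one sentence.
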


\begin{proof}
	Since the homomorphism $\Map(C,\bZ_\ell(1))/\bZ_\ell(1) \to U_K^{\ab,\ell}$ from Proposition~\ref{prop-dec-subgroups} depends functorially on $F|K|k$ with respect to isomorphisms, any $\sigma \in H$ induces a commutative square
	\begin{center}
		\begin{tikzcd}
			\Map(C,\bZ_\ell(1))/\bZ_\ell(1) \dar{- \circ \sigma^*} \rar & U_K^{\ab,\ell} \dar{\ad(\sigma)} \\
			\Map(C,\bZ_\ell(1))/\bZ_\ell(1) \rar & U_K^{\ab,\ell}.
		\end{tikzcd}
	\end{center}
	By the exact sequence~\ref{eq-total-dec-sequence}, the horizontal maps are injective with cokernel $\pi_1^{\ab,\ell}(C) = 1$, so they are in fact isomorphisms. We have to show that $f: C(k) \to \bZ_\ell(1)$ is a 1-point function at $P$ if and only if $f \circ \sigma^* \equiv f$ mod constants for all $\sigma \in \Stab_H(P)$. As these $\sigma^*$ fix $P$, the congruence mod constants amounts to equality. Now $\Stab_H(P)$ acts transitively on $C(k)\setminus\{P\}$, therefore if $f \circ \sigma^* = f$ for all $\sigma \in \Stab_H(P)$, then $f$ is constant on $C(k) \setminus \{P\}$. 
\end{proof}

We are led to the question of a group-theoretic characterisation of the stabiliser subgroups in $\Aut(\bP^1)$. Recall that an automorphism of $\bP^1$ is \defstyle{parabolic} if it has a single fixed point. E.\,g., the parabolic elements with fixed point $\infty$ are the non-trivial translations $z \mapsto z+b$ with $b \in k$. 

\begin{lem}
	\label{lem-stab-subgroups-in-pgl}
	\begin{enumerate}[leftmargin=1.5cm,label=(\alph*),itemsep=0ex]
		\item{An element of $\Aut(\bP^1)$ is parabolic if and only if it is uniquely $\ell$-divisible.}
		\item{A subgroup of $\Aut(\bP^1)$ is the stabiliser subgroup of a point $P \in \bP^1(k)$ if and only if it is the normaliser of the centraliser of a parabolic element.}
	\end{enumerate}
\end{lem}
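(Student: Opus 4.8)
The statement has two parts; (a) is the computational heart and (b) is a formal consequence. The strategy is to work explicitly with $\Aut(\bP^1) = \PGL(2,k)$ via Möbius transformations (or Jordan normal forms), distinguishing elements by their fixed-point behaviour, and then to translate this into the language of centralisers and normalisers.

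\emph{Proof of (a).} First I would classify elements of $\PGL(2,k)$ by their action on $\bP^1(k)$: the identity; parabolic elements (one fixed point); and elements with two distinct fixed points (the ``semisimple'' ones, diagonalisable over $k$). For the semisimple case, conjugating so that the fixed points are $0$ and $\infty$, such an element is $z \mapsto az$ with $a \neq 1$; I must show it is \emph{not} uniquely $\ell$-divisible, i.e.\ it has at least two distinct $\ell$th roots in $\PGL(2,k)$. Indeed $z \mapsto a^{1/\ell}z$ and $z \mapsto \zeta a^{1/\ell} z$ for a nontrivial $\ell$th root of unity $\zeta \in k$ (which exists since $\ell \neq \Char k$ and $k$ is algebraically closed) are two distinct such roots. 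For the parabolic case, conjugate so the fixed point is $\infty$; then the element is $z \mapsto z+b$, $b\neq 0$, with $\ell$th root $z \mapsto z + b/\ell$ (here $\ell$ is invertible in $k$). I must show uniqueness: any $\ell$th root $\psi$ of a parabolic $\phi$ fixes $\Fix(\phi)$, hence $\psi$ is also parabolic (it cannot be the identity, and if it had two fixed points so would $\psi^\ell = \phi$) with the same fixed point $\infty$, so $\psi(z) = z+c$ with $\ell c = b$, forcing $c = b/\ell$. Finally the identity is $\ell$-divisible but not uniquely so (any nontrivial element of order $\ell$ is a root, and such exist, e.g.\ $z \mapsto \zeta z$). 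This exhausts all cases and proves (a).

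\emph{Proof of (b).} Using (a), I want to characterise stabilisers $\Stab(P)$. Fix $P \in \bP^1(k)$ and pick any parabolic $\phi$ with fixed point $P$ (e.g.\ after conjugating, $P = \infty$ and $\phi(z) = z+1$). I would compute the centraliser $C(\phi)$: elements commuting with a translation $z \mapsto z+1$ are exactly the translations $z \mapsto z+c$, $c \in k$, forming the full translation subgroup $T_\infty \cong \Ga$. Then I compute its normaliser $N(C(\phi)) = N(T_\infty)$: a Möbius transformation normalises the group of translations iff it preserves its unique fixed point $\infty$, so $N(T_\infty) = \Stab(\infty)$ (the affine group $z \mapsto az+b$). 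This gives one inclusion: every ``normaliser of centraliser of a parabolic'' equals some $\Stab(P)$. Conversely, every $\Stab(P)$ arises this way, by choosing a parabolic element with fixed point $P$ — which exists — and running the same computation after conjugating $P$ to $\infty$. The key points to verify carefully are that the centraliser of a parabolic is \emph{exactly} the one-parameter translation group (not larger), and that its normaliser is \emph{exactly} the point stabiliser (not larger) — both are short explicit matrix computations in $\PGL(2,k)$.

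\emph{Main obstacle.} The only real subtlety is the uniqueness half of (a) for parabolic elements — ensuring that an $\ell$th root cannot be semisimple. This is handled by the fixed-point argument above (a root of $\phi$ fixes $\Fix(\phi)$, and cannot have a second fixed point without $\phi$ having one too), already used in the proof of Lemma~\ref{lem-pgl-divisible}. Everything else is routine linear algebra over the algebraically closed field $k$, using $\ell \neq \Char k$ precisely where we invert $\ell$ and where we invoke nontrivial $\ell$th roots of unity.
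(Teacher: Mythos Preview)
Your proposal is correct and follows essentially the same approach as the paper: conjugate to normal form, use fixed-point arguments for the uniqueness in (a), and compute the centraliser of $z\mapsto z+1$ as the translation group and its normaliser as the affine group for (b). The only cosmetic difference is that for uniqueness in (a) the paper phrases the argument via commutation (an $\ell$th root $\varphi$ of $z+1$ commutes with $z+1$, hence lies in its centraliser, hence is a translation), whereas you argue directly via fixed points; these are two ways of saying the same thing.
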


\begin{proof}
	The parabolic elements constitute the conjugacy class of the translation $z+1$. This is uniquely $\ell$-divisible with $z+1/\ell$ as the unique dividing element. Indeed, if $\varphi^\ell = z+1$, then $\varphi$ commutes with $z+1$, which implies that $\varphi$ is also parabolic with fixed point $\infty$, i.e.\ a translation, necessarily equal to $\varphi(z) = z +1/\ell$. The non-parabolic elements are conjugate to a homothety $az$ and are non-uniquely $\ell$-divisible as there are $\ell$ distinct roots $\sqrt[\ell]{a}$.
	
	For (b) it suffices to show that the stabiliser of $\infty$, which consists of the affine transformations $az+b$ with $a \in k^\times$ and $b \in k$, is the normaliser of the centraliser of $z+1$. The centraliser of $z+1$ is the group of translations $z+b$. These are indeed normalised by the affine transformations and conversely, if $\psi \circ (z+1) \circ \psi^{-1}$ is a translation, we have $\psi(\infty) = \infty$ since $\psi$ maps the fixed point of $z+1$ to that of $\psi \circ (z+1) \circ \psi^{-1}$.
\end{proof}

The two preceding lemmas prove Proposition~\ref{prop-autom-preserve-dec-subgroups} in the rational case. For the general case, call a compact open subgroup $V \subseteq G_{F|k}$ \defstyle{rational} if $V = U_{k(x)}$ for some rational function field $k(x)$ in $F$. Recall that the rational compact open subgroups are group-theoretically distinguished by Proposition~\ref{prop-rational-group-autom-invariant}.

\begin{lem}
	\label{lem-detect-dec-groups}
	Let $K|k$ be a function field in $F$ with complete nonsingular model $C$. A subgroup $Z \subseteq U_K^{\ab,\ell}$ is the decomposition group of a point $P \in C(k)$ if and only if the following hold:
	\begin{enumerate}[leftmargin=1.5cm,label=(\alph*),itemsep=0ex]
		\item{There exists a rational $V \supseteq U_K$ such that $Z$ is the image of a decomposition group under the transfer map $\tr: V^{\ab,\ell} \to U_K^{\ab,\ell}$.}
		\item{For every rational $V \supseteq U_K$, the image of $Z$ under the corestriction map $\cores: U_K^{\ab,\ell} \to V^{\ab,\ell}$ is contained in a decomposition group.}
	\end{enumerate}
\end{lem}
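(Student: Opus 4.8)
The strategy is to translate everything into the combinatorial model $\Hom(\Div^0(C),\bZ_\ell(1)) \cong \Map(C,\bZ_\ell(1))/\bZ_\ell(1)$ provided by Proposition~\ref{prop-dec-subgroups}, where decomposition groups are exactly the subgroups of $1$-point functions, and then use the explicit formulae \eqref{eq-cor-delta} and \eqref{eq-tr-delta} for how $\cores$ and $\tr$ act under a dominant map $\phi\colon C\to \bP^1$ (any rational $V=U_{k(x)}\supseteq U_K$ corresponds to such a $\phi$). The necessity direction comes first: if $Z=Z_P$ for $P\in C(k)$, pick any rational $V\supseteq U_K$, with associated $\phi\colon C\to\bP^1$, and let $Q=\phi(P)$. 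By \eqref{eq-tr-delta} the transfer of $Z_Q\subseteq V^{\ab,\ell}$ is the span of $\deg_i(\phi)^{-1}\mathbbm{1}_{\phi^{-1}(Q)}\omega$, which contains $Z_P$ iff $\phi^{-1}(Q)=\{P\}$ as a set. Such a $V$ exists because over an algebraically closed field the function field $K$ admits a separating rational subfield $k(x)$ with a prescribed point (e.g.\ choose $x\in K$ with a simple zero at $P$ and no other zero of small order — more simply, any nonconstant $x\in K$ that is a local uniformizer at $P$ and takes the value $\infty$ only at points $\neq P$; since $|C(k)|$ may be infinite one instead argues: the map $C\to\bP^1$ given by a general element of a suitable linear system $|nP|$ is such that $P$ is the unique preimage of $\infty$), giving (a). Condition (b) for $Z=Z_P$ is immediate from \eqref{eq-cor-delta}: $\cores(\delta_P\,\omega)=e_\phi(P)\,\delta_{\phi(P)}\,\omega$, which lies in the decomposition group $Z_{\phi(P)}$.

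For sufficiency, suppose $Z$ satisfies (a) and (b). From (a) we get a rational $V_0\supseteq U_K$, say with map $\phi_0\colon C\to\bP^1$, and a point $Q_0\in\bP^1(k)$ with $Z=\tr(Z_{Q_0})=\langle \deg_i(\phi_0)^{-1}\mathbbm{1}_{\phi_0^{-1}(Q_0)}\,\omega : \omega\in\bZ_\ell(1)\rangle$; in the model this is the span of the characteristic function of the finite set $S:=\phi_0^{-1}(Q_0)\subseteq C(k)$. If $|S|=1$ we are done. If $|S|\geq 2$, pick two distinct points $P_1,P_2\in S$ and choose a second rational $V_1\supseteq U_K$, with map $\phi_1\colon C\to\bP^1$, separating $P_1$ and $P_2$, i.e.\ $\phi_1(P_1)\neq\phi_1(P_2)$ (again possible over an algebraically closed field). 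Applying (b) to $V_1$: $\cores(\mathbbm{1}_S\,\omega)=\sum_{P\in S} e_{\phi_1}(P)\,\delta_{\phi_1(P)}\,\omega$ in $\Map(\bP^1,\bZ_\ell(1))/\bZ_\ell(1)$, and this must be a $1$-point function. But it is supported (modulo constants) on the set $\phi_1(S)$, which has at least two elements; since the ramification indices $e_{\phi_1}(P)$ are positive and $\ell\nmid$ them is not needed — one just checks the function $\sum_P e_{\phi_1}(P)\delta_{\phi_1(P)}$ is nonconstant away from any single point, as its values at two points of $\phi_1(S)$ differ. This contradiction forces $|S|=1$, so $Z=Z_{P}$ for the unique $P\in S$.

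The main obstacle, and the only place real care is needed, is the existence of the auxiliary rational subfields: (i) for necessity we need, given $P\in C(k)$, a rational $V\supseteq U_K$ whose associated $\phi\colon C\to\bP^1$ has $P$ as the sole preimage of some point; (ii) for sufficiency we need, given two distinct points of $C(k)$, a rational $V\supseteq U_K$ separating them. Both follow from Riemann–Roch: for (i) take $n\geq 2g+1$ and a general $\phi$ arising from $|nP|$ (or more elementarily, any $x\in K$ with pole divisor exactly $nP$), so $\phi^{-1}(\infty)=\{P\}$; for (ii), a general member of $|nP_1|$ for large $n$ has $\phi_1(P_1)=\infty\neq\phi_1(P_2)$. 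One must also record that $\phi$ being the complete nonsingular model map of $k(x)\hookrightarrow K$ means $V=U_{k(x)}\supseteq U_K$ really is of the required form, and that $V$ is rational in the sense of the lemma. Everything else is a direct bookkeeping translation through Propositions~\ref{prop-dec-subgroups}, \ref{prop-two-squares} and the formulae \eqref{eq-cor-delta}, \eqref{eq-tr-delta}.
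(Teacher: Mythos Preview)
Your argument is correct and follows the same route as the paper: translate to the model $\Map(C,\bZ_\ell(1))/\bZ_\ell(1)$, use Riemann--Roch to produce a $\phi\colon C\to\bP^1$ with $\phi^{-1}(\infty)=\{P\}$ for necessity~(a), and for sufficiency identify $Z$ with the span of $\mathbbm{1}_S$ for a fibre $S$ and then use a separating map plus~(b) to force $|S|=1$. One small wording issue: in the sufficiency step your justification ``its values at two points of $\phi_1(S)$ differ'' need not hold (the summed ramification indices over the two fibres could coincide), but your stated conclusion ``nonconstant away from any single point'' is still correct, since the function is nonzero at both $Q_1,Q_2\in\phi_1(S)$ and zero on the infinite complement $\bP^1(k)\setminus\phi_1(S)$.
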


\begin{proof}
	Assume that $Z = Z_P$ is a decomposition group. By Riemann-Roch, there exists a morphism $\phi: C \to \bP^1$ having a pole at $P$ and no other poles. Let $k(x) \subseteq K$ be the corresponding inclusion of function fields in $F$ and $V = U_{k(x)} \supseteq U_K$. For a 1-point function $\delta_{\infty} \,\omega$ at $\infty \in \bP^1$, we have
	$$\tr(\delta_\infty\,\omega) = \deg_i(\phi)^{-1} \mathbbm{1}_{\phi^{-1}(\infty)} \, \omega = \deg_i(\phi)^{-1} \delta_P \, \omega,$$
	which shows $\tr(Z_\infty) = Z_P$. For (b), let $V \subseteq U_K$ be any rational group and $\phi: C \to \bP^1$ a corresponding morphism. Then for $P \in C(k)$, we have
	$$\cores(\delta_P\,\omega) = e_\phi(P) \delta_{\phi(P)} \,\omega,$$
	hence $\cores(Z_P) = e_\phi(P) Z_{\phi(P)}$ is contained in the decomposition group $Z_{\phi(P)}$.
	
	Suppose conversely that $Z \subseteq U_K^{\ab,\ell}$ satisfies the two conditions. The transfer calculation above shows that (a) is equivalent to the existence of a non-empty finite subset $S \subseteq C(k)$ (namely a fibre of a morphism $C \to \bP^1$) such that the elements of $Z$ correspond to the functions $f: C(k) \to \bZ_\ell(1)$ that are constant on $S$ and vanish on $C(k) \setminus S$. Let $\phi: C \to \bP^1$ be a morphism separating the points in $S$ and let $V \supseteq U_K$ be the corresponding rational group. For $f = \mathbbm{1}_S\, \omega$ we have
	$$
	\cores(f)(Q) = (f \circ \phi^*)(Q) = 
	\begin{cases}
	e_\phi(P)\,\omega & \text{if } \phi^{-1}(Q) \cap S = \{P\},\\
	0 & \text{if } \phi^{-1}(Q) \cap S = \emptyset.
	\end{cases}
	$$
	By (b), this should be a 1-point function at a point, hence $S$ consists just of a single point, $S = \{P\}$, and $Z = Z_P$ is the decomposition group of $P$.
\end{proof}

\section{Proof of Main Theorem}
\label{sec-main-result}

\begin{lem}
	\label{lem-pullback}
	In the situation of Theorem~\ref{thm-main-theorem}, given $\lambda: G_{F|k} \isomto G_{F'|k'}$, let $K_1 \subseteq K_2$ and $K_1' \subseteq K_2'$ be extensions of function fields with $\lambda^{-1}(U_{K_i'}) = U_{K_i}$, corresponding to morphisms $\phi: C_2 \to C_1$ and $\phi': C_2' \to C_1'$. Let $\lambda^*: \Div(C_i') \to \Div(C_i)$ be the linear extension of the bijection from Proposition~\ref{prop-autom-preserve-dec-subgroups}. Of the two squares
	\begin{center}
		\begin{minipage}[t]{0.40\textwidth}
			\begin{diagram}
				\Div(C_2') \rar{\lambda^*}[below]{\sim} \dar{\phi'_*} & \Div(C_2) \dar{\phi_*} \\
				\Div(C_1') \rar{\lambda^*}[below]{\sim} & \Div(C_1),
			\end{diagram}
		\end{minipage}
		\begin{minipage}[t]{0.40\textwidth}
			\begin{diagram}
				\Div(C_2') \rar{\lambda^*}[below]{\sim} & \Div(C_2) \\
				\Div(C_1') \rar{\lambda^*}[below]{\sim} \uar{\phi'^*} & \Div(C_1) \uar{\phi^*},
			\end{diagram}
		\end{minipage}
	\end{center}
	the first always commutes and the second commutes if $\deg_i(\phi) = \deg_i(\phi')$.
\end{lem}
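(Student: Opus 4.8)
The plan is to translate both squares into statements about decomposition groups and to exploit the functoriality of corestriction and transfer under $\lambda$, with the explicit formulas~\eqref{eq-cor-delta} and~\eqref{eq-tr-delta} as the essential input. Write $\lambda^{\ab,\ell}_i\colon U_{K_i}^{\ab,\ell}\isomto U_{K_i'}^{\ab,\ell}$ for the isomorphisms induced by $\lambda$ and $\mu_i = (\lambda^*)^{-1}\colon C_i(k)\to C_i'(k')$ for the inverse of the point bijection of Proposition~\ref{prop-autom-preserve-dec-subgroups}, so that $\lambda^{\ab,\ell}_i(Z_P) = Z_{\mu_i(P)}$ for $P\in C_i(k)$. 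Under the canonical identifications $Z_P\cong\bZ_\ell(1)$ (over $k$) and $Z_{\mu_i(P)}\cong\bZ_\ell(1)$ (over $k'$) of Proposition~\ref{prop-dec-subgroups}, the map $\lambda^{\ab,\ell}_i$ restricts on $Z_P$ to a continuous — hence $\bZ_\ell$-linear — isomorphism $\chi_{i,P}$ between these two copies of $\bZ_\ell(1)$, which a priori depends on $i$ and $P$. We also use that two distinct decomposition groups on a single curve intersect trivially, since a function that is a $1$-point function at two different points must be constant (here $\abs{C_i(k)}\geq 3$).

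For the first square, by $\bZ$-linearity it is enough to check commutativity on a point $P'\in C_2'(k')$; set $P=\lambda^*(P')$. Functoriality of corestriction under $\lambda$ gives $\lambda^{\ab,\ell}_1(\cores Z_P) = \cores' Z_{P'}$. By~\eqref{eq-cor-delta}, $\cores Z_P = e_\phi(P)\,Z_{\phi(P)}$ is a nonzero subgroup of $Z_{\phi(P)}$, so its image under the isomorphism $\lambda^{\ab,\ell}_1$ is a nonzero subgroup of $Z_{\mu_1(\phi(P))}$, while $\cores' Z_{P'}$ is a nonzero subgroup of $Z_{\phi'(P')}$. As distinct decomposition groups meet trivially, $\mu_1(\phi(P)) = \phi'(P')$, i.e.\ $\phi(\lambda^*(P')) = \lambda^*(\phi'(P'))$, which is commutativity of the pushforward square on points and hence on all divisors. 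In particular $\lambda^*$ restricts to a bijection $(\phi')^{-1}(Q')\isomto\phi^{-1}(\lambda^*(Q'))$ for each $Q'\in C_1'(k')$.

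For the second square it suffices to prove that $\lambda^*$ preserves ramification indices, $e_\phi(\lambda^*(P')) = e_{\phi'}(P')$ for all $P'$, because then
\[
\lambda^*\bigl((\phi')^{*}Q'\bigr) = \sum_{P'\in(\phi')^{-1}(Q')} e_{\phi'}(P')\,\lambda^*(P') = \sum_{P\in\phi^{-1}(\lambda^*(Q'))} e_\phi(P)\,P = \phi^{*}\bigl(\lambda^*Q'\bigr)
\]
by the fibre bijection above. This is where transfer and corestriction are played against each other. Functoriality of the transfer together with~\eqref{eq-tr-delta} gives, for $Q\in C_1(k)$, $Q'=\mu_1(Q)$ and every $\omega\in\bZ_\ell(1)$, the identity
\[
\deg_i(\phi)^{-1}\sum_{P\in\phi^{-1}(Q)}\delta_{\mu_2(P)}\,\chi_{2,P}(\omega) = \deg_i(\phi')^{-1}\,\chi_{1,Q}(\omega)\sum_{P\in\phi^{-1}(Q)}\delta_{\mu_2(P)}
\]
in $\Map(C_2',\bZ_\ell(1))/\bZ_\ell(1)$, using the fibre bijection $(\phi')^{-1}(Q') = \mu_2(\phi^{-1}(Q))$ on the right. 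Since the common support is a finite, hence proper, subset of $C_2'(k')$, a finitely supported function that is constant vanishes, so comparing coefficients of each $\delta_{\mu_2(P)}$ yields $\deg_i(\phi)^{-1}\chi_{2,P} = \deg_i(\phi')^{-1}\chi_{1,\phi(P)}$; under the hypothesis $\deg_i(\phi) = \deg_i(\phi')$ this says $\chi_{2,P} = \chi_{1,\phi(P)}$. On the other hand functoriality of corestriction and~\eqref{eq-cor-delta} give $e_\phi(P)\,\chi_{1,\phi(P)} = e_{\phi'}(\mu_2(P))\,\chi_{2,P}$; substituting $\chi_{2,P} = \chi_{1,\phi(P)}$ and using that this is an isomorphism of the torsion-free group $\bZ_\ell(1)$ forces $e_\phi(P) = e_{\phi'}(\mu_2(P))$ as integers, i.e.\ $e_\phi(\lambda^*(P')) = e_{\phi'}(P')$, which is the claim.

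The delicate point is the second square, where one must carry the twists $\chi_{i,P}$ along: $\lambda$ being only an abstract isomorphism, it need not identify $\bZ_\ell(1)$ over $k$ with $\bZ_\ell(1)$ over $k'$ in any preferred way, and a priori the twist could vary with the point. The device that resolves this is that the transfer records only the \emph{reduced} fibre (all multiplicities $1$, up to the unit $\deg_i^{-1}$), whereas the corestriction records the ramification indices; matching transfers forces the twists above $P$ and $\phi(P)$ to agree — this is exactly where $\deg_i(\phi) = \deg_i(\phi')$ enters, through the factor $\deg_i(\phi)^{-1}$ in~\eqref{eq-tr-delta} — and then matching corestrictions pins down the ramification indices. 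The first square, by contrast, needs nothing beyond the trivial intersection of distinct decomposition groups.
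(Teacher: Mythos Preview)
Your proof is correct and uses the same ingredients as the paper: functoriality of $\cores$ and $\tr$ under $\lambda$, the formulas~\eqref{eq-cor-delta}--\eqref{eq-tr-delta}, and trivial intersection of distinct decomposition groups. The organization, however, is slightly different. For the second square you keep $\tr$ and $\cores$ separate and carry the point-dependent twists $\chi_{i,P}\colon \bZ_\ell(1)\to\bZ_\ell(1)$ explicitly, first using $\tr$ to force $\chi_{2,P}=\chi_{1,\phi(P)}$ (when $\deg_i(\phi)=\deg_i(\phi')$) and then $\cores$ to read off $e_\phi(P)=e_{\phi'}(\mu_2(P))$. The paper instead considers the composite $\tr\circ\cores$ and observes that, after projecting back to the summand $Z_P$ inside $\bigoplus_{P'\in\phi^{-1}(Q)}Z_{P'}$, one gets an \emph{endomorphism} of the rank-one $\bZ_\ell$-module $Z_P$, namely multiplication by $e_\phi(P)/\deg_i(\phi)$; since any isomorphism of rank-one $\bZ_\ell$-modules transports scalar endomorphisms to themselves, the twist bookkeeping evaporates and one reads off $e_\phi(P)/\deg_i(\phi)=e_{\phi'}(P')/\deg_i(\phi')$ directly. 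Your approach makes explicit what the paper's composite trick hides, at the cost of a longer argument; the paper's is slicker but relies on the reader seeing why projecting to $Z_P$ is $\lambda$-equivariant (which follows from the fibre bijection you proved for the first square).
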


\begin{proof}
	By equations~\eqref{eq-cor-delta} and \eqref{eq-tr-delta}, the corestriction and transfer maps restrict for $Q \in C_1$ as follows
	\begin{diagram}
		\displaystyle\bigoplus_{P \in \phi^{-1}(Q)} Z_P \rar[hook] \dar[xshift=-.3em,swap]{\cores} & U_{K_2}^{\ab,\ell} \dar[xshift=-.3em, swap]{\cores} \\
		Z_Q \rar[hook] \uar[xshift=.3em,swap]{\tr} & U_{K_1}^{\ab,\ell}. \uar[xshift=.3em,swap]{\tr} 
	\end{diagram}
	For $P \in \phi^{-1}(Q)$, we have $\cores(Z_P) = e_\phi(P)Z_Q \subseteq Z_Q$ of finite index. This determines $Q = \phi(P)$ uniquely since decomposition groups of different points have trivial intersection, hence the commutativity of the first square. We have
	$$(\tr\circ\cores)(\delta_P\,\omega) = \frac{e_\phi(P)}{\deg_i(\phi)} \mathbbm{1}_{\phi^{-1}(Q)}\,\omega = \frac{e_\phi(P)}{\deg_i(\phi)} \sum_{P' \in \phi^{-1}(Q)}\delta_{P'}\,\omega,$$
	thus the endomorphism of $Z_P$ induced by $\tr \circ \cores$ is multiplication by $e_\phi(P)/\deg_i(\phi)$. Therefore, assuming equal inseparable degrees of $\phi$ and $\phi'$, the ramification indices match and the second square commutes.
\end{proof}

\begin{rmk}
	The $\ell$-part of the ramification index $e_\phi(P)$ can also be reconstructed as the index $(Z_Q : \cores(Z_P))$. If one restricts oneself to fields of characteristic zero, $\ell$ can be an arbitrary prime and this would give an alternative reconstruction of ramification indices.
\end{rmk}

The \defstyle{projectivisation} of a $k$-vector space $V$ is the set $\bP_k V = (V \setminus \{0\})/k^\times,$
together with the projective lines as distinguished subsets. A map $\bP_{k'} V' \hookrightarrow \bP_k V$ is a \defstyle{projective embedding} if it is injective and maps lines onto lines. It is a \defstyle{collineation} if it admits an inverse projective embedding. Given a function field $K|k$ with complete nonsingular model $C$, we view $K^\times/k^\times = \bP_k K$ as a projective space over $k$, and with it the group $\PDiv(C) \cong K^\times/k^\times$ of principal divisors. The strategy is then to recover the projective structure on $\PDiv(C)$ and reconstruct the function field $K|k$ by an application of the fundamental theorem of projective geometry. This idea appears already in~\cite{bogomolov} and later in~\cite{bogomolov-tschinkel-reconstruction} (Theorem~3.6) and~\cite{pop-recovering-function-fields}. We shall use the following slightly more general form for projective embeddings rather than collineations.

\begin{thm}[Fundamental Theorem of Projective Geometry, \cite{artin-ga}, Thm~II.2.26]
	Let $k$ and $k'$ be arbitrary fields, let $V$ and $V'$ be vector spaces of dimension $\geq 3$ over $k$ and $k'$, respectively, and let $\varphi: \bP_{k'}V' \longhookrightarrow \bP_k V$ be a projective embedding.
	Then there exist a field isomorphism $\tau: k' \isomto k$ and a $\tau$-semilinear injection $\Phi: V' \hookrightarrow V$ lifting $\varphi$, i.e.\ 
	$$\varphi(v' \bmod k'^\times) = \Phi(v') \bmod k^\times \quad \text{for } v' \in V'\setminus\{0\}.$$
	If $(\tilde \tau, \tilde \Phi)$ is another such pair, then $\tilde \tau = \tau$ and $\tilde \Phi = m_\alpha \circ \Phi$ for a unique $\alpha \in k^\times$, where $m_\alpha \in \Aut_{k}(V)$ is multiplication by $\alpha$. \qed
\end{thm}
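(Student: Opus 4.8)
The statement is classical --- it is essentially von Staudt's coordinatisation theorem together with the fact that projective coordinate changes are semilinear --- and is quoted here from \cite{artin-ga}, so I will only describe the architecture of a proof; the plan is to build the pair $(\tau,\Phi)$ by hand out of a projective frame and then verify the lifting property and uniqueness. First I would reduce to the case of a collineation: since $\varphi$ is injective and carries every line of $\bP_{k'}V'$ \emph{onto} a line, its image is stable under passing to the line through any two of its points, hence is a projective subspace $\bP_k W$ for some linear subspace $W \subseteq V$; thus $\varphi$ is a collineation $\bP_{k'}V' \isomto \bP_k W$, and it suffices to treat this case, composing the resulting $\Phi$ afterwards with $W \hookrightarrow V$. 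Put $n+1 = \dim_{k'} V' = \dim_k W \geq 3$. I would then fix a projective frame: a basis $e_0',\dots,e_n'$ of $V'$ together with the unit point represented by $e' = e_0' + \cdots + e_n'$. Because $\varphi$ and its inverse preserve subspace dimensions, the images of these $n+2$ points are again in general position, so I can choose representatives $e_0,\dots,e_n$ of $\varphi([e_0']),\dots,\varphi([e_n'])$ normalised so that $e_0 + \cdots + e_n$ represents $\varphi([e'])$; the $e_i$ then form a basis of $W$.

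Next I would construct $\tau$. On the axis $\overline{[e_0'][e_1']}$ every point other than $[e_1']$ is $[e_0' + \lambda e_1']$ for a unique $\lambda \in k'$; its $\varphi$-image lies on $\varphi(\overline{[e_0'][e_1']}) = \overline{[e_0][e_1]}$, hence is $[e_0 + \mu e_1]$ for a unique $\mu \in k$, and I set $\tau(\lambda) := \mu$. Using the unit point to calibrate one checks that the same $\tau$ arises from every pair of coordinate axes, and $\tau$ is a bijection (surjective because $\varphi$ maps the axis \emph{onto} the whole line $\overline{[e_0][e_1]}$). The crux --- and the step I expect to be the main obstacle --- is that $\tau$ is a ring homomorphism: the sum $\lambda + \lambda'$ and product $\lambda\lambda'$ of affine coordinates are the outputs of purely incidence-theoretic constructions (the Desargues-type configurations defining $+$ and $\times$ in the coordinate field of a projective plane, available since $n \geq 2$), and $\varphi$ transports each such configuration in $\bP_{k'}V'$ to the corresponding one in $\bP_k W$ because it preserves lines and their intersection points; hence $\tau(\lambda+\lambda') = \tau(\lambda)+\tau(\lambda')$ and $\tau(\lambda\lambda') = \tau(\lambda)\tau(\lambda')$. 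This is precisely von Staudt's ``algebra of throws'' and is where the hypotheses are genuinely used.

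Then I would define $\Phi(\sum_i \lambda_i e_i') := \sum_i \tau(\lambda_i) e_i$, the $\tau$-semilinear map with $e_i' \mapsto e_i$; it is injective since the $e_i$ are independent and $\tau$ is bijective. To see that $\Phi$ lifts $\varphi$ on all of $\bP_{k'}V'$, not merely on the frame, I would argue that the homogeneous coordinates $[\lambda_0:\cdots:\lambda_n]$ of a point $[v']$ are recovered from its incidences with the coordinate subspaces spanned by subsets of the frame; $\varphi$ preserves all these incidences and, along each axis, replaces $\lambda_i$ by $\tau(\lambda_i)$ by construction, so $\varphi([v']) = [\Phi(v')]$, as desired.

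Finally, for uniqueness, suppose $(\tilde\tau,\tilde\Phi)$ is another such pair. Lifting $\varphi$ forces $\tilde\Phi(e_i') = c\, e_i$ for a common scalar $c \in k^\times$ (the unit point makes the scalars equal), and then evaluating on $e_0' + \lambda e_1'$ shows $\tilde\tau(\lambda) = \tau(\lambda)$ for all $\lambda$, i.e.\ $\tilde\tau = \tau$. Consequently $g := \tilde\Phi \circ \Phi^{-1}$ is a $k$-linear automorphism of $W$ inducing the identity on $\bP_k W$, hence fixing every line through the origin; a $k$-linear automorphism of a space of dimension $\geq 2$ with this property is multiplication by a single scalar $\alpha \in k^\times$ (the same elementary observation underlying the proof of Lemma~\ref{lem-autom-id-citerion}), so $\tilde\Phi = m_\alpha \circ \Phi$ with $\alpha$ unique.
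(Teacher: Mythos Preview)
The paper does not prove this theorem; it is quoted from Artin's \emph{Geometric Algebra} and closed with a \qed, so there is no ``paper's own proof'' to compare against. Your sketch is a faithful outline of the classical von Staudt coordinatisation argument and is essentially correct.

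One point deserves a word. You write ``Put $n+1 = \dim_{k'} V' = \dim_k W \geq 3$'' and thereafter work with a finite frame $e_0',\dots,e_n'$, but the statement as formulated (and as used in the paper, where $V' = K'$ is a function field over $k'$) allows infinite-dimensional $V'$. This is not a genuine obstruction: the construction of $\tau$ takes place entirely inside a single projective plane $\bP_{k'}\langle e_0',e_1',e_2'\rangle$, and once $\tau$ is known the map $\Phi$ can be defined on an arbitrary (possibly infinite) basis, with the lifting property checked one finite-dimensional coordinate subspace at a time. You should make this explicit rather than silently assuming finite dimension. Everything else---the reduction to a collineation onto a subspace, the incidence-theoretic verification that $\tau$ is a ring homomorphism, and the uniqueness argument via scalar automorphisms---is sound.
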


\begin{lem}
	\label{lem-collineation-from-field-isom}
	Let $k$ and $k'$ be arbitrary fields, $K|k$ and $K'|k'$ two field extensions of degree $\geq 3$ and suppose that
	$$\varphi: K'^\times/k'^\times \longhookrightarrow K^\times/k^\times$$
	is simultaneously a projective embedding and a homomorphism of abelian groups. Then there exists a unique homomorphism of field extensions 
	$$\Phi: K'|k' \longhookrightarrow K|k$$
	lifting $\varphi$ and restricting to an isomorphism $k' \cong k$.
\end{lem}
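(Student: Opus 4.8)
The plan is to apply the Fundamental Theorem of Projective Geometry to the projective embedding $\varphi$ and then to promote the resulting semilinear map to a ring homomorphism, exploiting the additional hypothesis that $\varphi$ is a group homomorphism. First I would invoke the theorem, which is legitimate since $[K:k]\geq 3$ and $[K':k']\geq 3$, to obtain a field isomorphism $\tau\colon k'\isomto k$ and a $\tau$-semilinear injection $\Phi\colon K'\hookrightarrow K$ with $\varphi(v'\bmod k'^\times)=\Phi(v')\bmod k^\times$. Since $\varphi$ is a group homomorphism it fixes the class of $1$, so $\Phi(1)\in k^\times$; after replacing $\Phi$ by $m_{\Phi(1)^{-1}}\circ\Phi$ I may assume $\Phi(1)=1$. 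Then $\Phi(\lambda')=\Phi(\lambda'\cdot 1)=\tau(\lambda')$ for all $\lambda'\in k'$, so $\Phi$ already restricts to the isomorphism $\tau$ on the base field and lifts $\varphi$ by construction.

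The heart of the argument is to show that $\Phi$ is multiplicative. For a fixed $x\in K'^\times$ I would compare the two maps $\psi_x,\chi_x\colon K'\to K$ defined by $\psi_x(y)=\Phi(xy)$ and $\chi_x(y)=\Phi(x)\Phi(y)$. Both are additive and $\tau$-semilinear (for $\psi_x$ because $y\mapsto xy$ is $k'$-linear, for $\chi_x$ because $\tau(\lambda')$ is central in $K$), both are injective, and, crucially, both descend to the same map $\overline y\mapsto\varphi(\overline x\,\overline y)$ on projective quotients: for $\psi_x$ because $\Phi$ lifts $\varphi$, for $\chi_x$ because $\varphi$ is a group homomorphism. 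Since $\overline y\mapsto\varphi(\overline x\,\overline y)$ is $\varphi$ precomposed with the collineation of $\bP_{k'}K'$ induced by multiplication by $x$, it is itself a projective embedding, so the uniqueness clause of the Fundamental Theorem applies and produces $\alpha(x)\in k^\times$ with $\psi_x=m_{\alpha(x)}\circ\chi_x$, i.e. $\Phi(xy)=\alpha(x)\,\Phi(x)\,\Phi(y)$ for all $y\in K'$. Evaluating at $y=1$ and using $\Phi(1)=1$ together with $\Phi(x)\neq 0$ forces $\alpha(x)=1$, whence $\Phi(xy)=\Phi(x)\Phi(y)$ for all $x,y$. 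Thus $\Phi$ is an injective homomorphism of field extensions $K'|k'\hookrightarrow K|k$ lifting $\varphi$ and restricting to the isomorphism $\tau\colon k'\isomto k$.

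For uniqueness, any homomorphism of field extensions $K'|k'\hookrightarrow K|k$ lifting $\varphi$ and restricting to an isomorphism onto $k$ is in particular a semilinear lift of $\varphi$; by the uniqueness part of the Fundamental Theorem two such lifts induce the same $\tau$ and differ by some $m_\alpha$, and evaluating at $1$ gives $\alpha=1$. The step I expect to be the main obstacle is exactly the comparison of $\psi_x$ and $\chi_x$ via the uniqueness in the Fundamental Theorem, and within it the point to be careful about is that $\overline y\mapsto\varphi(\overline x\,\overline y)$ is a genuine projective embedding (injective and sending lines onto lines), not merely a projective map, so that the uniqueness clause is actually applicable; everything else is routine bookkeeping with semilinear maps.
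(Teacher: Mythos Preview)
Your proposal is correct and follows essentially the same route as the paper: invoke the Fundamental Theorem to get a semilinear lift $\Phi$ normalised by $\Phi(1)=1$, then for fixed $x$ compare the two $\tau$-semilinear maps $y\mapsto\Phi(xy)$ and $y\mapsto\Phi(x)\Phi(y)$ (the paper writes these as $\Phi\circ m_{x'}$ and $m_{\Phi(x')}\circ\Phi$), observe they induce the same projective embedding because $\varphi$ is multiplicative, and conclude $\alpha=1$ from the uniqueness clause together with $\Phi(1)=1$. Your write-up is in fact slightly more explicit than the paper's about why the common projective map is a genuine embedding and about the uniqueness of $\Phi$.
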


\begin{proof}
	Let $(\tau,\Phi)$ be the pair from the fundamental theorem of projective geometry, which is uniquely determined by requiring $\Phi(1) = 1$. We have to show that $\Phi$ respects multiplication. Fix $x' \in K'^\times$ and let $m_{x'}: K' \to K'$ and $m_{\Phi(x')}: K \to K$ be multiplication by $x'$ and $\Phi(x')$, respectively. We need to show that the two maps 
	$$\Phi \circ m_{x'} \;\text{ and }\: m_{\Phi(x')} \circ \Phi: K' \to K$$
	are equal. They are both $\tau$-semilinear and by multiplicativity of $\varphi$ induce the same projective embedding $K'^\times/k'^\times \to K^\times/k^\times$. By the uniqueness statement in the fundamental theorem of projective geometry, there exists a unique $\alpha \in k^\times$ such that $\Phi \circ m_{x'} = m_\alpha \circ m_{\Phi(x')} \circ \Phi$. The normalisation $\Phi(1) = 1$ forces $\alpha = 1$, so the two maps are equal.
\end{proof}

\begin{prop}
	\label{prop-isom-and-collineation}
	In the situation of Theorem~\ref{thm-main-theorem}, given $\lambda: G_{F|k} \isomto G_{F'|k'}$, let $K|k$ and $K'|k'$ be corresponding function fields. Then $\lambda$ induces an isomorphism
	$$\lambda^*: K'^\times/k'^\times \isomto K^\times/k^\times$$
	of abelian groups which is simultaneously a collineation of projective spaces. If $K_1 \subseteq K_2$ and $K_1' \subseteq K_2'$ have the same inseparable degree, the following square commutes:
	\begin{diagram}
		K_1'^\times/k'^\times \dar[hook] \rar{\lambda^*}[swap]{\sim} & K_1^\times/k^\times \dar[hook] \\
		K_2'^\times/k'^\times \rar{\lambda^*}[swap]{\sim} & K_2^\times/k^\times. 
	\end{diagram}
\end{prop}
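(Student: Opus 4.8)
The plan is to realise $K^\times/k^\times$ as the group $\PDiv(C)$ of principal divisors, to recover this subgroup of $\Div(C)$ together with its projective structure from $\lambda$, and to read off the map. By Proposition~\ref{prop-autom-preserve-dec-subgroups}, $\lambda$ gives a bijection $\lambda^*\colon C'(k')\isomto C(k)$, hence a group isomorphism $\lambda^*\colon\Div(C')\isomto\Div(C)$; by Lemma~\ref{lem-pullback} this intertwines the pull-back maps $\phi'^*$ and $\phi^*$ along any corresponding pair of morphisms to $\bP^1$ whenever $\deg_i(\phi)=\deg_i(\phi')$, and on the $\bP^1$-side it carries $\Div^0$ onto $\Div^0$, that subgroup being generated by differences of points. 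Everything then reduces to: (i) $\lambda^*(\PDiv(C'))=\PDiv(C)$; (ii) $\lambda^*$ maps projective lines to projective lines; (iii) the square in the statement commutes. Given (i), $\lambda^*$ restricts to the asserted isomorphism $K'^\times/k'^\times\isomto K^\times/k^\times$ (the kernel of $\divisor$ being the constants), and (iii) is immediate: under $\divisor$ the inclusion $K_1^\times/k^\times\hookrightarrow K_2^\times/k^\times$ is exactly the pull-back $\phi^*\colon\PDiv(C_1)\to\PDiv(C_2)$ for $\phi\colon C_2\to C_1$, so under the hypothesis $\deg_i(\phi)=\deg_i(\phi')$ the square is the second square of Lemma~\ref{lem-pullback} restricted to principal divisors.

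For (i) I would use the description $\PDiv(C)=\bigcup_\phi\phi^*(\Div^0\bP^1)$, where $\phi$ runs over the \emph{finite separable} morphisms $C\to\bP^1$ and $\phi^*(\Div^0\bP^1)=\{\divisor_C(g):g\in k(\phi)^\times\}$ since $\Pic^0(\bP^1)=0$. That the union is all of $\PDiv(C)$ follows because a nonconstant $f\in K^\times$ can be written $f=g^{p^m}$ with $g\notin K^p$ (there is a largest $m$ with $f\in K^{p^m}$, as $\bigcap_n K^{p^n}=k$; in characteristic $0$ take $m=0$), and then $f\in k(g)^\times$ with $K|k(g)$ separable because $g\notin K^p$; constant $f$ give $0=\phi^*(0)$. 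The key point is that $\lambda$ matches separable morphisms: a finite-index rational overgroup $V=U_{k(x)}\supseteq U_K$ determines the perfect class of $k(x)$, and its largest member contained in $K$, namely $L=k(x^{p^{a_0}})$ with $a_0=\min\{a:x^{p^a}\in K\}$, satisfies that $K|L$ is separable — because $(x^{p^{a_0}})^{1/p}\notin K$ forces $x^{p^{a_0}}\notin K^p$. Thus $(U_K\subseteq V)\rightsquigarrow (L\subseteq K)\rightsquigarrow(C\to\bP^1\text{ separable of degree }[V\!:\!U_K])$ is a bijection between finite-index rational overgroups of $U_K$ and separable morphisms $C\to\bP^1$, intertwined by $\lambda$ with the analogous bijection for $K'$. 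For a corresponding pair $\phi,\phi'$ one has $\deg_i(\phi)=1=\deg_i(\phi')$, so Lemma~\ref{lem-pullback} gives $\lambda^*\phi'^*=\phi^*\lambda^*$; since $\lambda^*$ preserves $\Div^0\bP^1$ it carries $\phi'^*(\Div^0\bP^1)$ onto $\phi^*(\Div^0\bP^1)$, and taking the union over all corresponding separable $\phi$ yields (i).

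For (ii), I would first reduce to lines through $[\mathbf 1]$: a general line is a translate of such a one by a principal divisor, and translation by a principal divisor sends lines to lines by multiplicativity of $\divisor$. The line through $[\mathbf 1]$ and $[f]$ is $\{\divisor_C(1+tf):t\in\bP^1(k)\}$; writing $f=g^{p^m}$ with $g\notin K^p$ and using $(1+sg)^{p^m}=1+s^{p^m}f$ together with the fact that $s\mapsto s^{p^m}$ permutes $\bP^1(k)$, this set equals $p^m$ times the line through $[\mathbf 1]$ and $[g]$, and multiplying a line by $p^m$ again gives a line (it is spanned by the $p^m$-th powers of any spanning pair). Now $\phi=g\colon C\to\bP^1$ is separable, the line through $[\mathbf 1]$ and $[g]$ equals $\phi^*$ of the fixed line $\{Q-[\infty]:Q\in\bP^1(k)\}\subseteq\PDiv(\bP^1)$, and applying $\lambda^*$ — using the matching of separable morphisms and the intertwining $\lambda^*\phi'^*=\phi^*\lambda^*$ from (i) — turns it into $\phi^*$ (for the corresponding separable morphism on the other side) of $\{R-\lambda^*[\infty]:R\in\bP^1(k)\}$, which is again a line on $\bP^1$; since $\phi^*$ is induced by a $k$-linear field embedding it sends lines to lines, so the image is a line. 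Doing the same for $\lambda^{*-1}$ shows $\lambda^*$ is a collineation.

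The main obstacle is (i): reconstructing $\PDiv(C)$ inside $\Div(C)$ forces pulling divisors back along morphisms to $\bP^1$, and Lemma~\ref{lem-pullback} only controls such pull-backs when the inseparable degrees agree. The real content is therefore the observation that it suffices to work with separable morphisms, and that these correspond on the two sides with matching (trivial) inseparable degree via the ``largest representative inside $K$'' construction above; everything else is bookkeeping, once one also notes that replacing $K$ by a Frobenius twist only alters the resulting $\lambda^*$ by a semilinear collineation and so is harmless.
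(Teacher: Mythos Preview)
Your argument is correct, but it handles the inseparability constraint of Lemma~\ref{lem-pullback} differently from the paper. The paper does not restrict to separable morphisms: it characterises principal divisors directly as those of the form $\phi^*(Q_0)-\phi^*(Q_1)$ for \emph{some} morphism $\phi\colon C\to\bP^1$ and points $Q_0,Q_1\in\bP^1(k)$, and lines as translates $D+\{\phi^*(Q)-\phi^*(Q_0):Q\in\bP^1(k)\}$, again for arbitrary $\phi$. The point you worry about --- that Lemma~\ref{lem-pullback} requires $\deg_i(\phi)=\deg_i(\phi')$ --- is handled implicitly: given any $\phi$ corresponding to $k(x)\subseteq K$, one simply picks in the perfect class of the rational field on the $K'$-side the representative $k'(y)\subseteq K'$ with $[K':k'(y)]_i=[K:k(x)]_i$ (always possible by Frobenius-twisting the separable representative). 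This makes the proof very short. Your route instead pins down a canonical separable $\phi'$ for each separable $\phi$ via the ``largest rational subfield contained in $K$'' construction, and then pays for this by needing the $f=g^{p^m}$ reduction and the observation that $p^m\cdot(\text{line})$ is again a line. What you gain is that the matching of inseparable degrees is forced ($1=1$) rather than chosen, so the dependence on Lemma~\ref{lem-pullback} is more transparent; what you lose is brevity, since the paper's characterisations of $\PDiv(C)$ and of lines in terms of arbitrary $\phi^*$ are one-liners. Your final comment about Frobenius twists of $K$ is unnecessary: the proposition fixes $K$ and $K'$, and your construction already yields the stated map for that fixed choice.
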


\begin{proof}
	A divisor $D \in \Div(C)$ is principal if and only if there exist a morphism $\phi: C \to \bP^1$ and two points $Q_0,Q_1 \in \bP^1(k)$ such that $D = \phi^*(Q_0) - \phi^*(Q_1)$, thus by Lemma~\ref{lem-pullback} the isomorphism $\lambda^*: \Div(C') \cong \Div(C)$ restricts to the subgroups of principal divisors and induces $\lambda^*$ as claimed. The lines in $\PDiv(C)$ are given by
	$$D + \left\{ \phi^*(Q) - \phi^*(Q_0) \suchthat Q \in \bP^1(k) \right\}$$
	for a principal divisor $D \in \PDiv(C)$ , a morphism $\phi: C \to \bP^1$ and a point $Q_0 \in \bP^1(k)$. Indeed, a line in $\PDiv(C)$ corresponds to a 2-dimensional $k$-subspace $\langle f, g\rangle \subseteq K$, and if $\phi: C \to \bP^1$ is the morphism given by $f/g$, the line is
	\begin{align*}
	\left\{ \divisor(af+bg) \suchthat (a:b) \in \bP^1 \right\} &= \divisor(g) +  \left\{ \divisor(af/g+b) \suchthat (a:b) \in \bP^1 \right\} \\
	&= \divisor(g) + \left\{ \phi^*(-b:a) - \phi^*(\infty) \suchthat (a : b) \in \bP^1 \right\}.
	\end{align*}
	Thus, again by Lemma~\ref{lem-pullback}, $\lambda^*$ is a collineation. The commutativity of the square follows from the same lemma.
\end{proof}

Hence in the situation of Theorem~\ref{thm-main-theorem}, given $\lambda: G_{F|k} \isomto G_{F'|k'}$, we have for every pair of corresponding function fields $K|k$ and $K'|k'$ an isomorphism 
\begin{align}
	\sigma_{K,K'}: K'|k' \isomto K|k. \label{eq-sigma-isom}
\end{align}

\begin{lem}
	\label{lem-choose-K'}
	Given $\lambda: G_{F|k} \isomto G_{F'|k'}$, there exists a map $K \mapsto K'$ from the set of function fields in $F$ to the set of function fields in $F'$ such that $\lambda^{-1}(U_{K'}) = U_K$ for all $K$, and whenever $K_1 \subseteq K_2$, then $K_1' \subseteq K_2'$ of the same inseparable degree. 
\end{lem}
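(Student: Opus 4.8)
The plan is to settle the characteristic zero case trivially and, in characteristic $p>0$, to anchor the construction at a fixed base point so that the only real work is a coherence check governed by multiplicativity of the generalised inseparable degree.

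If $\Char k = 0$, perfect equivalence is trivial and every algebraic extension of function fields is separable, so one simply lets $K \mapsto K'$ be the bijection of Proposition~\ref{prop-function-field-bijection}, and the condition on inseparable degrees holds vacuously. Assume henceforth that $\Char k = p > 0$. I would first assemble the tools: Proposition~\ref{prop-purely-inseparable-equivalence}, which presents each perfect equivalence class of function fields in $F'$ as an infinite tower indexed by $\bZ$; the generalised inseparable degree $[\cdot:\cdot]_i \in p^{\bZ}$ of Definition~\ref{def-gen-insep-degree}, which is multiplicative in towers, satisfies $L_1 \subseteq L_2 \iff [L_2:L_1]_i \geq 1$, and equals $1$ exactly when $L_2|L_1$ is separable; the consequent observation that, once $L_1^i \subseteq L_2^i$, the quantity $[L_2:-]_i$ runs bijectively through $p^{\bZ}$ as its argument ranges over the representatives of the perfect equivalence class of $L_1$; and the fact, from the Galois-type correspondence (Theorem~\ref{thm-galois}), that $K_1 \subseteq K_2$ implies $U_{K_2} \subseteq U_{K_1}$ and hence $(K_1')^i \subseteq (K_2')^i$ for the corresponding classes in $F'$.

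Next I would fix a function field $K_0$ in $F$ together with a representative $K_0'$ of its corresponding perfect equivalence class in $F'$. For a function field $M \supseteq K_0$ in $F$, set $M'$ to be the unique representative of the corresponding class with $[M':K_0']_i = [M:K_0]_i$ (an ordinary inseparable degree, so $\geq 1$; in particular $K_0' \subseteq M'$). For an arbitrary function field $K$ in $F$, put $\widehat K := K\cdot K_0$, which is again a function field and contains $K_0$, and set $K'$ to be the unique representative of the class corresponding to $K$ with $[\widehat K':K']_i = [\widehat K:K]_i$; when $K \supseteq K_0$ this reproduces the previous recipe. Since $K'$ lies in the class corresponding to $K$, we have $\lambda^{-1}(U_{K'}) = U_K$ for every $K$, as required.

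It then remains to verify that $K_1 \subseteq K_2$ forces $K_1' \subseteq K_2'$ with $[K_2':K_1']_i = [K_2:K_1]_i$, and I expect this to follow from two uses of multiplicativity. From $K_0 \subseteq \widehat K_1 \subseteq \widehat K_2$ one gets $[\widehat K_2':\widehat K_1']_i = [\widehat K_2:\widehat K_1]_i \geq 1$, whence $\widehat K_1' \subseteq \widehat K_2'$; and then from the chains $K_1 \subseteq \widehat K_1 \subseteq \widehat K_2$ and $K_1 \subseteq K_2 \subseteq \widehat K_2$ (legitimate since $(K_1')^i \subseteq (K_2')^i \subseteq (\widehat K_2')^i$) one obtains $[K_2':K_1']_i = [\widehat K_2':K_1']_i\,[\widehat K_2':K_2']_i^{-1} = [\widehat K_2:K_1]_i\,[\widehat K_2:K_2]_i^{-1} = [K_2:K_1]_i \geq 1$, so indeed $K_1' \subseteq K_2'$ and the inseparable degree is as claimed. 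The main obstacle is precisely this positive-characteristic coherence: perfect equivalence determines each $K'$ only up to the $\bZ$-torsor of its Frobenius twists, and the twists must be chosen compatibly over the poset of all function fields in $F$, which is merely directed; funnelling every $K$ through the canonical compositum $K\cdot K_0$ is what reduces the compatibility to the two chains above, where multiplicativity of the generalised inseparable degree finishes the job, with Proposition~\ref{prop-purely-inseparable-equivalence} entering only to guarantee that the representative prescribed in the definition of $M'$ exists and is unique.
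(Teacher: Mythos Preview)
Your proof is correct and follows essentially the same approach as the paper: fix a base field $K_0$ with a chosen $K_0'$, define $K'$ first for $K \supseteq K_0$ by matching $[K':K_0']_i = [K:K_0]_i$, then for arbitrary $K$ via the compositum $K\cdot K_0$, and verify compatibility using multiplicativity of the generalised inseparable degree. The paper presents the verification more tersely via a field diagram, whereas you spell out the two chain computations explicitly, but the argument is the same.
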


\begin{proof}
	Assume $\Char(k) = p > 0$. Fix a function field $K_0$ in $F$ and choose any corresponding $K_0'$. For every $K  \supseteq K_0$, choose $K'$ in its perfect equivalence class such that $[K' : K_0']_i = [K : K_0]_i$. Then for an arbitrary function field $K$ in $F$, choose $K'$ in its perfect equivalence class such that $[(K K_0)' : K']_i = [K K_0 : K]_i$. One verifies the assertions by looking at the field diagram
	\begin{center}
		\begin{tikzcd}[row sep = tiny]
			& & (K_0 K_2)' \arrow[dash]{dd} \\
			& (K_0 K_1)' \urar[dashed,-] \arrow[dash]{dd} &  \\
			K_0' \urar[dash] \arrow[uurr, bend left, -] &  & K_2'. \\
			& K_1' \urar[dashed,-] & 
		\end{tikzcd}
	\end{center}
	The fields are chosen for the solid lines to have inseparable degrees matching those of the corresponding extensions in $F$. It follows that the same holds for the dashed lines.
\end{proof}

Choosing a map $K \mapsto K'$ according to the lemma, the isomorphisms $\sigma_{K,K'}$ from~(\ref{eq-sigma-isom}) are compatible with each other, hence define $\sigma: F'|k' \isomto F|k$ that satisfies $\sigma(K') = K$ for all function fields $K$ in $F$. It remains to show that the induced isomorphism $\Phi(\sigma): G_{F|k} \isomto G_{F'|k'}$ coincides with the given $\lambda$.

\begin{lem}
	\label{lem-autom-preserv-function-fields}
	Let $F|k$ be an extension of algebraically closed fields with $\trdeg(F|k) = 1$.
	\begin{enumerate}[label=(\alph*)]
		\item{Suppose $\sigma \in \Aut(F)$ satisfies $\sigma k = k$ and $\sigma K = K$ for all function fields $K$ in $F$. Then $\sigma = \id$.}
		\item{Assume $\Char(k) = p > 0$ and suppose $\sigma \in \Aut(F)$ satisfies $\sigma k = k$ and $\sigma K^i = K^i$ for all function fields $K|k$ in $F$. Then $\sigma$ is an integral power of the Frobenius automorphism.}
		\item{Let $\lambda \in \Aut(G_{F|k})$ be a topological automorphism such that $\lambda(U) = U$ for all compact open subgroups $U$ in $G_{F|k}$. Then $\lambda = \id$.}
	\end{enumerate}
\end{lem}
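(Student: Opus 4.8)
The three parts build on each other, so I would prove them in order: part~(a) is the base case, part~(b) deduces the Frobenius statement from~(a) by tracking how $\sigma$ permutes perfect-equivalence classes, and part~(c) reduces to~(a) and~(b) by a conjugation trick together with Remark~\ref{rem-normaliser-quotient}. For~(a), fix $x \in F \setminus k$; since $k$ is algebraically closed and $\trdeg(F|k)=1$, such an $x$ is transcendental and $k(x)$ is a function field in $F$, so $\sigma$ restricts to a field automorphism of $k(x)$ and, by Lüroth, sends $x$ to a Möbius transform $\sigma(x)=M(x)=(ax+b)/(cx+d)$ with coefficients in $k$. Fixing a prime $\ell \neq \Char(k)$ and applying the same remark to $k(x^\ell)\subseteq k(x)$ gives $\sigma(x^\ell)=N(x^\ell)$ with $N\in\PGL(2,k)$, hence an identity $M(x)^\ell = N(x^\ell)$ in $k(x)$. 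I would then compare the divisors of the two sides on $\bP^1_x$: the left-hand side has divisor $\ell\bigl((M^{-1}0)-(M^{-1}\infty)\bigr)$, while the right-hand side is a pullback along the map $x\mapsto x^\ell$, which (as $\ell\neq\Char k$) is totally ramified exactly over $0$ and $\infty$; this forces $\{M^{-1}(0),M^{-1}(\infty)\}=\{0,\infty\}$, i.e. $\sigma(x)\in\{\lambda x,\ \lambda/x : \lambda\in k^\times\}$. Running the same argument with $x-1$ in place of $x$ and using $\sigma(x-1)=\sigma(x)-1$, a short case distinction on numerator/denominator degrees eliminates every possibility except $\sigma(x)=x$. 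As this holds for all $x\in F\setminus k$ and $\sigma(1)=1$, the relation $c=(c+x)-x$ for $c\in k$ shows $\sigma=\id$ on $k$ too, so $\sigma=\id$.

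For~(b), assume $\Char(k)=p>0$. For each function field $K$ in $F$ the hypothesis gives $\sigma(K)^i=\sigma(K^i)=K^i$, so $\sigma(K)$ is perfectly equivalent to $K$, whence by Proposition~\ref{prop-purely-inseparable-equivalence} there is a unique $n_K\in\bZ$ with $\sigma(K)=K^{p^{n_K}}$. The crux is that $n_K$ does not depend on $K$: for $K_1\subseteq K_2$ one has $K_1^{p^{n_{K_1}}}\subseteq K_2^{p^{n_{K_2}}}$, and comparing the degree of this extension with $[\sigma(K_2):\sigma(K_1)]=[K_2:K_1]$ forces $n_{K_1}=n_{K_2}$; since any two function fields both embed into their compositum, $n:=n_K$ is a constant. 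Then $\Frob^{-n}\circ\sigma$ fixes $k$ and every function field $K$, hence equals $\id$ by~(a), so $\sigma=\Frob^n$.

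For~(c), let $\lambda$ fix every compact open subgroup and take $g\in G_{F|k}$. Every compact open $W\leq G_{F|k}$ can be written $W=gW'g^{-1}$ with $W'=g^{-1}Wg$ again compact open; applying $\lambda$ and using $\lambda(W)=W$ and $\lambda(W')=W'$ gives $\lambda(g)W'\lambda(g)^{-1}=gW'g^{-1}$, so $h:=g^{-1}\lambda(g)$ normalises every compact open subgroup. By Remark~\ref{rem-normaliser-quotient} this means $h(K^i)=K^i$ for every function field $K$, and $h\in\Aut(F|k)$ fixes $k$ pointwise. In characteristic $0$ this reads $h(K)=K$ for all $K$, so $h=\id$ by~(a); in characteristic $p$, part~(b) gives $h=\Frob^n$, and since a nonzero power of Frobenius does not fix $k$ pointwise we get $n=0$, so again $h=\id$. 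Hence $\lambda(g)=g$ for every $g$, i.e. $\lambda=\id$.

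I expect the only real obstacle to be part~(a), namely pinning down the Möbius transformation $\sigma|_{k(x)}$ exactly; parts~(b) and~(c) are essentially bookkeeping once~(a) is available. The one place in~(a) where characteristic matters is that the auxiliary extension $k(x)\,|\,k(x^\ell)$ must be separable, which is why one works with a prime $\ell\neq\Char(k)$ rather than uniformly with $x^2$.
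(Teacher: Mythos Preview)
Your argument is correct in all three parts. Parts~(b) and~(c) are essentially identical to the paper's proof: the paper also shows that the integer $n_K$ is constant by comparing (generalised inseparable) degrees along an inclusion $K_1\subseteq K_2$ and then passing to a common overfield, and for~(c) it uses the same conjugation identity $\sigma U\sigma^{-1}=\lambda(\sigma)U\lambda(\sigma)^{-1}$ to see that $\sigma^{-1}\lambda(\sigma)$ normalises every compact open subgroup, then invokes Remark~\ref{rem-normaliser-quotient} and parts~(a)/(b) exactly as you do.

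The genuine difference is in part~(a). The paper argues as follows: for $x\in F\setminus k$, the automorphism $\sigma$ preserves every function field $K\supseteq k(x)$, hence permutes the branch points in $\bP^1$ of the corresponding cover $C\to\bP^1$; since every two-element subset of $\bP^1(k)$ occurs as a branch locus, $\sigma$ must fix every place of $k(x)|k$, whence $\sigma y/y\in k^\times$ for all $y\in k(x)^\times$, and Lemma~\ref{lem-autom-id-citerion} finishes. Your route is instead to pin down the M\"obius transformation $M$ with $\sigma(x)=M(x)$ directly, using only the single auxiliary subfield $k(x^\ell)\subset k(x)$: the identity $M(x)^\ell=N(x^\ell)$ forces the support of $\divisor M$ to lie in $\{0,\infty\}$, and the parallel constraint for $x-1$ together with $\sigma(x-1)=\sigma(x)-1$ leaves only $\sigma(x)=x$ after a short case check. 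Both are clean; the paper's version is more conceptual and reuses Lemma~\ref{lem-autom-id-citerion}, while yours is self-contained and makes explicit that a single tame Kummer cover already rigidifies $\sigma|_{k(x)}$ up to the two-element ambiguity $\{\lambda x,\lambda/x\}$.
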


\begin{proof}
	\begin{enumerate}[label=(\alph*)]
		\item{
			Let $x \in F\setminus k$, use it as a coordinate on $\bP^1$. For every function field extension $k(x) \subseteq K$, corresponding to a morphism $\phi: C \to \bP^1$, the automorphism $\sigma$ permutes the normalised discrete valuations of $k(x)|k$ that ramify in $K$, i.e.\ the branch points of $\phi$. But every two-element subset of $\bP^1(k)$ is the branch locus of some $\phi$, so $\sigma$ must act trivially on the set of normalised discrete valuations of $k(x)|k$. Therefore we have $\sigma y / y \in k^\times$ for all $y \in k(x)^\times$ and we conclude $\sigma = \id$ on $k(x)$ by Lemma~\ref{lem-autom-id-citerion}. Since $x$ was arbitrary, $\sigma = \id$ on $F$.
		}
		\item{
			For each function field $K$, there exists a unique $n \in \bZ$ such that $\sigma K = K^{p^{n}}$. We claim that $n$ is independent of $K$. Indeed, if one function field is contained in another, $K_1 \subseteq K_2$, we find $n_1 = n_2$ by looking at generalised inseparable degrees of $K_1 \subseteq K_2$ and $\sigma K_1 \subseteq \sigma K_2$. The general case follows since any two function fields are contained in a common finite extension. Now $(\sigma \circ \Frob^{-n})(K) = K$ for all function fields $K$, thus $\sigma = \Frob^{n}$ by (a).
		}
		\item{
			Let $\sigma \in G_{F|k}$. For all compact open subgroups $U$ of $G_{F|k}$ we have
			$$\sigma U \sigma^{-1} = \lambda(\sigma U \sigma^{-1}) = \lambda(\sigma) U \lambda(\sigma)^{-1},$$
			thus $\sigma^{-1}\lambda(\sigma)$ is contained in the normaliser of $U$. This implies $\sigma^{-1}\lambda(\sigma)(K^i) = K^i$ for all function fields $K|k$ in $F$, so $\sigma^{-1}\lambda(\sigma)$ is the identity in characteristic $0$ and a power of the Frobenius in positive characteristic by (a) and (b), respectively. But it fixes $k$ elementwise, hence $\sigma^{-1}\lambda(\sigma) = 1$.
			\qedhere
		}
	\end{enumerate}
\end{proof}

For the isomorphism $\sigma: F'|k' \isomto F|k$ constructed from $\lambda: G_{F|k} \isomto G_{F'|k'}$, we have 
$$\Phi(\sigma)(U_K) = \sigma^{-1} U_K \sigma = U_{\sigma^{-1}K} = U_{K'} = \lambda(U_K)$$
for all function fields $K$ in $F$. Thus $\Phi(\sigma)^{-1} \circ \lambda$ satisfies the hypotheses of Lemma~\ref{lem-autom-preserv-function-fields}~(c) and we conclude $\lambda = \Phi(\sigma)$, finishing the proof of Theorem~\ref{thm-main-theorem}.

\bibliographystyle{amsalpha}
\bibliography{bibliography}

\end{document}